\theoremstyle{plain}
\newtheorem*{theorem*}{Theorem}
\newtheorem{theorem}{Theorem}
\newtheorem{corollary}[theorem]{Corollary}
\newtheorem{definition}[theorem]{Definition}
\newtheorem{example}[theorem]{Example}
\newtheorem{lemma}[theorem]{Lemma}
\newtheorem{proposition}[theorem]{Proposition}
\newtheorem{remark}[theorem]{Remark}
\newtheorem{setting}[theorem]{Setting}
\numberwithin{theorem}{section}
\numberwithin{equation}{section}
\renewcommand{\1}{{\bf 1}}
\newcommand{\R}{$R$}
\newcommand{\Ad}{{\rm Ad}}
\newcommand{\ad}{{\rm ad}}
\newcommand{\Aut}{{\rm Aut}}
\newcommand{\de}{{\rm d}}
\newcommand{\ee}{{\rm e}}
\newcommand{\End}{{\rm End}}
\newcommand{\GL}{{\rm GL}}
\newcommand{\Hom}{{\rm Hom}}
\newcommand{\ie}{{\rm i}}
\renewcommand{\Im}{{\rm Im}}
\newcommand{\Ind}{{\rm Ind}}
\newcommand{\Ker}{\mathrm{Ker}\,}
\newcommand{\Prim}{{\rm Prim}}
\newcommand{\SU}{{\rm SU}}
\newcommand{\spec}{\mathrm{spec}\,}
\newcommand{\U}{\mathrm{U}}
\newcommand{\id}{{\rm id}}
\newcommand{\Res}{{\rm Res}}
\newcommand{\supp}{\mathrm{supp}\,}
\newcommand{\Sp}{\mathrm{Sp}\,}
\newcommand{\Cl}{\mathrm{Cl}\,}
\newcommand{\CC}{{\mathbb C}}
\newcommand{\RR}{{\mathbb R}}
\newcommand{\TT}{{\mathbb T}}
\newcommand{\Ac}{{\mathcal A}}
\newcommand{\Bc}{{\mathcal B}}
\newcommand{\Fc}{{\mathcal F}}
\newcommand{\Hc}{{\mathcal H}}
\newcommand{\Jc}{{\mathcal J}}
\newcommand{\Kc}{{\mathcal K}}
\newcommand{\Lc}{{\mathcal L}}
\newcommand{\Pc}{{\mathcal P}}
\newcommand{\Sc}{{\mathcal S}}
\newcommand{\Tc}{{\mathcal T}}
\newcommand{\Uc}{{\mathcal U}}
\newcommand{\Vc}{{\mathcal V}}
\newcommand{\ag}{{\mathfrak a}}
\renewcommand{\gg}{{\mathfrak g}}
\newcommand{\kg}{{\mathfrak k}}
\renewcommand{\ng}{{\mathfrak n}}
\newcommand{\zg}{{\mathfrak z}}
\newcommand{\wtilde}[1]{\widetilde{#1}}
\newcommand{\what}[1]{\widehat{#1}}
\title[Groups with $T_1$ primitive ideal spaces]{AF-embeddability for  Lie groups \\with $T_1$ primitive ideal spaces}
\author{Ingrid Belti\c t\u a}
\author{Daniel Belti\c t\u a}
\address{Institute of Mathematics ``Simion Stoilow'' of the Romanian Academy,
P.O. Box 1-764, Bucharest, Romania}
\email{Ingrid.Beltita@imar.ro, ingrid.beltita@gmail.com}
\email{Daniel.Beltita@imar.ro, beltita@gmail.com}
\keywords{solvable Lie group; group $C^*$-algebra; quasi-orbit}
\thanks{2020 \textit{Mathematics Subject Classification.} Primary 22D25; Secondary 22E27.}
\begin{document}
\parskip5pt

\begin{abstract} 
We study simply connected Lie groups $G$ for which the hull-kernel topology of the primitive ideal space $\Prim(G)$ of the group $C^*$-algebra $C^*(G)$ is $T_1$, that is, the finite subsets of $\Prim(G)$ are closed. 
Thus, we prove that $C^*(G)$ is AF-embeddable. 
To this end, we show that if $G$ is solvable and its action on the centre of $[G, G]$ has at least one imaginary weight, 
then $\Prim(G)$ has no nonempty quasi-compact open subsets. 
We prove in addition that connected locally compact groups with $T_1$ ideal spaces are strongly quasi-diagonal. 
\end{abstract}

\maketitle

\section{Introduction}

Considerable attention has been recently paid to 
finite-dimensional approximation properties of $C^*$-algebras, such as 
quasi-diagonality or embeddability into a $C^*$-algebra that is an inductive limit of finite-dimensional $C^*$-algebras  ---for short,  AF-embed\-dability.

In the present paper we study these properties for $C^*$-algebras that occur in the noncommutative harmonic analysis, that is, the $C^*$-algebras of connected Lie groups. 
In sharp contrast to the $C^*$-algebras of countable discrete groups, 
we have already found in \cite{BB18} that there exist solvable (hence amenable) connected Lie groups whose corresponding $C^*$-algebras are not quasi-diagonal, and in particular they are not AF-em\-beddable. 
It turns out however that affirmative results can be obtained under natural topological assumptions on the primitive ideal space, as described below. 

The primitive ideal space $\Prim(G)$ of a locally compact group $G$ is the primitive ideal space of the group $C^*$-algebra $C^*(G)$, endowed with its hull-kernel topology. 
One of the main results of this paper is: 
If $G$ is a simply connected Lie group for which $\Prim(G)$ is~$T_1$, then $C^*(G)$ is AF-embeddable 
(Theorem~\ref{AF-Lie}). 
We recall that the class of connected Lie groups $G$ for which $\Prim(G)$ is~$T_1$ was characterized in \cite{MoRo76} and \cite{Pu73} in Lie algebraic terms. 
Using that characterization, it turns out that it 
simultaneously contains 
many differing classes of solvable Lie groups that are not necessarily of type I, such as the celebrated Mautner groups,  
Dixmier groups,  
the diamond groups, 
the rigid motion groups, nilpotent Lie groups and compact groups.

If the topological condition~$T_1$ is not required, then $C^*(G)$ is still AF-embeddable  whenever
$G$ is a connected and simply connected solvable Lie group and its action on the centre of the commutator group $[G,G]$ (which is nilpotent) has a purely imaginary weight (Corollary~\ref{AF-solvable-extra}).
We recall that, without the weight conditions,  connected, simply connected and solvable Lie groups 
may not be AF-embeddable (see \cite[Th. 2.15]{BB18}).

Our approach to the above AF-embeddability result is based on a preliminary investigation of the dual topology of the solvable Lie groups, 
a topic that, despite some remarkable works, 
remains notoriously difficult even for some type I solvable Lie groups.
 Our key technical result shows that if $G$ is a second countable locally compact group
which has a closed normal subgroup $L$ such that 
$A:=G/L$ is abelian and the closure of every orbit of $G$ in $\what{L}$  is the spectrum  of the restriction 
of a unitary representation of $G$, 
then there is a homeomorphism of quasi-orbit spaces 
$$(\Prim(G)/\widehat{A})^\sim\simeq (\Prim(L)/G)^\sim.$$
(See Theorem~\ref{quasi-orbits}.) 
Using the results of \cite{Pu71}, we show that the above conditions on $G$ are satisfied for 
arbitrary connected and simply connected solvable Lie groups, with $L=[G,G]$. 
What the above homeomorphism facilitates for us is to check that, when the action of $G$ 
 action on the centre of $L$ has at least one purely imaginary weight, 
$\Prim(G)$ has no nonempty quasi-compact open subsets (Theorem~\ref{solvable-extra}), 
which further allows an application of a result of \cite{Ga20}. 
As an application of Theorem~\ref{AF-Lie}, we finally prove that if $\Ac$ is the $C^*$-algebra of a simply connected solvable Lie group, then the following implication holds true: if the image of $\Ac$ in every irreducible representation is AF-embeddable, then $\Ac$ itself is AF-embeddable 
(Corollary~\ref{thm-classR}). 
Apart for the case when $\Ac$ is a type~I $C^*$-algebra, very few other instances of that implication seem to be known in the literature, 
although the analogous implication for quasi-diagonality has long been known and is fairly elementary.

Our paper is organized as follows: 
In Section~\ref{section2} we establish some preliminary facts belonging to three topics: topologies of the spaces of closed subsets of a topological space, weak containment of representations, and the technique of weights applied to studying the topology of the space of quasi-orbits of a linear dynamical system. 
In Section~\ref{quasiorbits} we establish the basic homeomorphism theorem on quasi-orbit spaces mentioned above.
In Section~\ref{proofs} we then obtain a key result on absence of non-empty open quasi-compact subsets of $\Prim(G)$ if $G$ is a simply connected solvable Lie group such that the action of $G$ 
action on the centre of $[G,G]$ has at least one purely imaginary weight.
As a consequence, we obtain absence of non-empty open quasi-compact subsets of $\Prim(G)$ if $G$ is a simply connected solvable Lie group of type \R. 
Earlier results of this type can be found in \cite[\S 4.1]{DP17} in the special
case of Lie groups whose centers are nondiscrete. 
That condition, however, is too restrictive even for some simple and yet quite important examples of solvable Lie groups such as the Mautner group or its generalized versions from \cite{AuMo66}.
Section~\ref{section5} contains  our main result on AF-embeddability of $C^*(G)$ if $G$ is a simply connected Lie group for which $\Prim(G)$ is $T_1$. 
Finally, in Section~\ref{section7} we show that if $G$ is a simply connected solvable Lie group, then the $T_1$ condition on $\Prim(G)$ is equivalent to the condition that the image of $C^*(G)$ in every irreducible representation is  AF-embeddable, 
and this hence implies that $C^*(G)$ is itself AF-embeddable by the results of Section~\ref{section5}. 
In addition, we prove that every connected, locally compact group with $T_1$ primitive ideal space is strongly quasi-diagonal (Corollary~\ref{prim7}).

\section{Preliminaries }\label{section2}

We prove here some preliminary results for later use. 
For the notions and notation in $C^*$-algebras and topology, we refer the reader to \cite{Dix77},  \cite{BO08}, \cite{Wi07} and \cite{BkHa19}.

\subsection{The space of closed subsets of a topological space}
We first recall the topology defined on the set of all closed subsets of a topological space, seen as a set with 
a partial ordering given by inclusion. 
For details we refer the reader to \cite{GHKLMS03}.

\begin{definition}\label{ordtop}
	\normalfont
	Let $X$ be a topological space. 
	For $F\subseteq X$ closed, we define 
	$$ \downarrow\! F: =\{F'\mid F'\subseteq F, \; F' \; \text{closed in } X\}.$$
	We denote by $\Cl(X)$ the space of all closed subsets of $X$ endowed with its upper topology, that is, the topology for 
	which a sub-base  of closed sets consists of $X$ and sets of the form   $\downarrow\! F$, with $F\subseteq X$ closed.
	See \cite[Def. O-5.4]{GHKLMS03}.
\end{definition}

One of the main objects in our paper is the space of quasi-orbits of an action of a topological group on a topological space.
In these spaces the points are not necessarily closed, so we need to briefly
recall the notion of $T_0$-ization of a topological space. 
See \cite[Ch.~6]{Wi07} for more details.

\begin{definition}\label{T0ization}
	\normalfont
	Let $X$ be topological space, and consider the equivalence relation
	 $$ x\sim y  \quad \Longleftrightarrow \quad \overline{\{x\}} = \overline{\{y\}}.$$
	Then the  \emph{$T_0$-ization} of $X$ is  the quotient space 
	$$ X^\sim = X/\sim, $$
	with the corresponding quotient topology.
	The quotient map $q\colon X\to X/\sim$ is called the \emph{$T_0$-ization map}.
	\end{definition}

Part of the following lemma is known in some special cases (see \cite[Lemma 6.8]{Wi07}), but we give the proofs here for the sake of  completeness.

\begin{lemma}\label{quotient}
	Let $X$ be a topological space, and $G$ a topological group with a continuous action
	$G \times X\to X$, $(g, x)\mapsto g\cdot x$.
	Denote by $r\colon X\to X/G$ the canonical quotient map, by
	$q\colon X/G \to (X/G)^\sim$ the $T_0$-ization map, and by $Q\colon X \to (X/G)^\sim$, $Q:= q\circ r$  the quasi-orbit map.
	Then we have:
	\begin{enumerate}[\rm (i)]
		\item\label{quotient_item_-1} The map $r$ is continuous, open and surjective,  
		and  $r^{-1}(\overline{\{r(x)\}})=\overline{G\cdot x}$ for every  $x\in X$.
		\item\label{quotient_item_0} For every $x, y\in X$, $\overline{\{r(x)\}}= \overline{\{r(y)\}}$ if and only if 
				$\overline{G\cdot x} = \overline{G\cdot y}$.
		\item\label{quotient_item_i} $q$ and $Q$ are continuous, open and  surjective.
		\item\label{quotient_item_ii} The map $\iota\colon (X/G)^\sim \to \Cl(X)$, given by 
		$$ \iota(Q(x)) = \overline{G\cdot x}$$
		is well-defined, and a homeomorphism onto its image.
		\end{enumerate}
	 \end{lemma}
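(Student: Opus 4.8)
The plan is to establish the four assertions in order; item~\ref{quotient_item_-1} carries essentially all of the topological content, and items~\ref{quotient_item_0}--\ref{quotient_item_ii} then follow formally from it together with elementary facts about $T_0$-izations and the upper topology on $\Cl(X)$.

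For \ref{quotient_item_-1}: continuity and surjectivity of $r$ are built into the quotient topology. For openness, I would observe that for open $U\subseteq X$ the saturation $r^{-1}(r(U))$ equals $\bigcup_{g\in G}g\cdot U$, which is open because each map $x\mapsto g\cdot x$ is a homeomorphism of $X$; hence $r(U)$ is open. For the closure identity, the inclusion $\overline{G\cdot x}\subseteq r^{-1}(\overline{\{r(x)\}})$ is immediate, since $r$ is continuous and $r^{-1}(\{r(x)\})=G\cdot x$. For the reverse inclusion the key point is that $V:=X\setminus\overline{G\cdot x}$ is open and $G$-invariant (because $g\cdot\overline{G\cdot x}=\overline{g\cdot(G\cdot x)}=\overline{G\cdot x}$), so $r(V)$ is open with $r^{-1}(r(V))=V$; if some $y\in V$ satisfied $r(y)\in\overline{\{r(x)\}}$, then the open neighbourhood $r(V)$ of $r(y)$ would contain $r(x)$, forcing $x\in V$, a contradiction.

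Items \ref{quotient_item_0} and \ref{quotient_item_i} are then short. For \ref{quotient_item_0}, apply $r^{-1}$ to both sides of the equality of closures and use \ref{quotient_item_-1}; the converse direction uses that $r$ is surjective, so $r^{-1}(A)=r^{-1}(B)$ forces $A=B$. For \ref{quotient_item_i}, the map $q$ is continuous and surjective by the very definition of the $T_0$-ization, and it is open because the $\sim$-saturation of an open set $W\subseteq X/G$ is $W$ itself: if $z\sim w$ with $w\in W$, then $w\in\overline{\{z\}}$, so $z\notin W$ would put the closed set $\overline{\{z\}}$ inside $(X/G)\setminus W$, contradicting $w\in W$. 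Consequently $Q=q\circ r$ is continuous, open and surjective as a composition of such maps.

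For \ref{quotient_item_ii}: well-definedness and injectivity of $\iota$ are precisely \ref{quotient_item_0}. To prove continuity it suffices, since the sets $\downarrow\! F$ with $F\subseteq X$ closed (together with $\Cl(X)$) form a subbase of closed sets, to show each $\iota^{-1}(\downarrow\! F)$ is closed; but $\overline{G\cdot x}\subseteq F$ is equivalent to $x\in F_0:=\bigcap_{g\in G}g^{-1}\!\cdot F$, and $F_0$ is closed and $Q$-saturated, so $\iota^{-1}(\downarrow\! F)=Q(F_0)$ is closed because the open continuous surjection $Q$ is a quotient map. Finally, to see $\iota$ is a homeomorphism onto its image, for a closed set $C\subseteq(X/G)^\sim$ I would put $E:=Q^{-1}(C)$, a closed $G$-invariant set with $Q(E)=C$; then $x\in E\Rightarrow\overline{G\cdot x}\subseteq E$, and conversely $\overline{G\cdot x}\subseteq E\Rightarrow x\in E$, whence $\iota(C)=\iota((X/G)^\sim)\cap{\downarrow\! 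E}$, which is relatively closed in the image since $\downarrow\! E$ is closed in $\Cl(X)$. Thus $\iota$ is a closed continuous bijection onto its image, hence a homeomorphism onto it. The one genuinely substantive step is the closure identity in \ref{quotient_item_-1}; the only place in the remaining bookkeeping that needs care is passing, in \ref{quotient_item_ii}, to the $G$-saturated closed sets $F_0$ and $E$ and invoking that $Q$ is a quotient map so that closedness may be tested after pulling back along $Q$.
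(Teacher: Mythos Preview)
Your proof is correct and follows essentially the same strategy as the paper's. The only differences are cosmetic: for \eqref{quotient_item_i} you invoke the general fact that the $T_0$-ization map is open (by showing that the $\sim$-saturation of an open set is the set itself), whereas the paper computes directly that $q^{-1}(q(r(D)))=r(G\cdot D)$ for open $D\subseteq X$; and for \eqref{quotient_item_ii} you work with the closed subbase $\{\downarrow\! F\}$ and show $\iota$ is a closed map onto its image, while the paper uses the dual open subbase $\Uc(D)=\{F\in\Cl(X):F\cap D\neq\emptyset\}$ and shows $\iota$ is open onto its image. Both routes are of the same length and content.
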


\begin{proof} 
	\eqref{quotient_item_-1}
	The maps
	$r$ is  continuous and surjective since it is a quotient map.
	The fact that $r$ is open is well-known: For every $D$ open in $X$,  
	$r^{-1} (r(D))= G\cdot D\subseteq X $ is open, hence $r(D)$ is open in the quotient topology.

We have that $G\cdot x=r^{-1}(\{r(x)\})\subseteq r^{-1}(\overline{\{r(x)\}})$, 
	and this last set is closed since~$r$ is continuous, 
	hence  $\overline{G\cdot x}\subseteq r^{-1}(\overline{\{ r(x)\}})$. 
	
	It remains to prove the converse inclusion. 
	We note that, since $r$ is an open map, the set $\overline{G\cdot x}$ 
	is $G$-invariant, 
	that is, $\overline{G\cdot x}=r^{-1}(r(\overline{ G\cdot x}))$. 
	Therefore its complement $X\setminus \overline{G\cdot x}$ is also $G$-invariant, thus 
	$r^{-1}(r(X\setminus \overline{G\cdot x}))=X\setminus \overline{G\cdot x}$.
	Since $(X\setminus \overline{G x})\cap G\cdot x=\emptyset$, it  follows then that 
	\begin{equation}\label{open_proof_eq1}
	r(x)\not\in r(X\setminus \overline{G\cdot x}).
	\end{equation} 
	On the other hand, using  that $r$ is an open mapping, 
	the subset $r(X\setminus \overline{G\cdot x})\subseteq X/G$ is open. 
	Therefore \eqref{open_proof_eq1} implies $\overline{\{r(x)\}}\cap r(X\setminus \overline{G\cdot x})=\emptyset$, 
	hence
	$$r^{-1}(\overline{\{r(x)\}})\cap r^{-1}(r(X\setminus \overline{G\cdot x}))=\emptyset.$$
	Using again that $X\setminus \overline{G\cdot x}$ is $G$-invariant, 
	it follows that
	$r^{-1}(\overline{\{r(x)\}})\cap (X\setminus \overline{G\cdot x})=\emptyset$. 
	Thus we get the inclusion $r^{-1}(\overline{\{r(x)\}})\subseteq \overline{G\cdot x}$.

	\eqref{quotient_item_0} 
 The assertion follows immediately from \eqref{quotient_item_-1}   for the implication "$\Leftarrow$"  using  that the map $r$ is surjective.
	
\eqref{quotient_item_i}
	The map
	$q$ is continuous and surjective since it is a quotient map.
	To prove that $q$ is open, it suffices to show that $q^{-1}(q(r(D)) )$ is open for every 
	$D$ open in $X$. 
	We have that 
	$$ \begin{aligned}
	q^{-1}(q(r(D)) )&= \{ r(x) \mid \exists\; y \in D\,, q(r(x))=q(r(y))\}\\
	& = \{ r(x) \mid \exists\; y \in D, \, \overline{\{r(x)\} }=\overline{\{r(y)\} }\}\\
	& = \{ r(x) \mid \exists\; y \in D, \, \overline{G\cdot x}=\overline{G\cdot y }\},
	\end{aligned}
	$$
	where in the last line we have used \eqref{quotient_item_0} .
	If there is $y \in D$ such that $\overline{G\cdot x}=\overline{G\cdot y }$, then $D\cap \overline{G\cdot x}\ne \emptyset$, hence 
	 $D\cap G\cdot x\ne \emptyset$. 
	 On the other hand, if $D\cap G\cdot x\ne \emptyset$, then for $y \in D\cap G\cdot x$ we have that $G\cdot x=G\cdot y$, hence 
	$\overline{G\cdot x}=\overline{G\cdot y }$. 
	Summing up, we have obtained that 
	$$ q^{-1}(q(r(D)))= \{ r(x)\mid D \cap G\cdot x \ne \emptyset\}= r(G\cdot D), $$
	therefore $q^{-1}(q(r(D)))$ is open. 
	
	The fact that $Q$ is a continuous, open and surjective map follows immediately. 
	
	\eqref{quotient_item_ii}
	The map $\iota$ is well-defined  and injective since $Q$ is surjective, while
	$Q(x)= Q(y)$ if and only if  $\overline{\{ r(x) \}} = \overline{\{ r(y) \}}$ and if and only if 
	$\overline{G\cdot x}=\overline{G\cdot y }$,  by  \eqref{quotient_item_0}.

	A sub-basis of open sets for the topology on $\Cl(X)$ is given by the sets of the form
	$$ \Uc (D) = \{F\in \Cl(X) \mid F\cap D \neq \emptyset\}, $$
	where $D$ are open sets in $X$.
		For $D\subseteq X$ open, 
	$$ \begin{aligned}
	\iota^{-1}(\Uc(D))& = \{Q(x) \mid \iota (Q(x)) \in \Uc(D)\}\\
		& = \{Q(x) \mid \overline{G\cdot x} \cap D \ne \emptyset \}\\
		& = \{Q(x) \mid G\cdot x \cap D \ne \emptyset \}= Q(G\cdot D), 
		\end{aligned}
	$$ 
hence $\iota^{-1}(\Uc(D))$ is open. 
It follows that $\iota$ is continuous.
On the other hand, using the arguments above, 
$$ 
\begin{aligned}
\iota (Q(D)) & =\{\overline {G\cdot x}\mid x\in D\} \\
& = \{\overline {G\cdot x}\mid \overline {G\cdot x}\cap D \ne \emptyset\}\\
& = \iota ((X/G)^\sim)\cap \Uc(D).
\end{aligned}
$$
It follows that $\iota \colon (X/G)^\sim \to \iota ((X/G)^\sim)$ is open as well, hence $\iota$  is a homeomorphism onto its image
(see \cite[p. TGI.30]{Bo71}).
\end{proof}

\begin{definition}\label{2xtild3}\normalfont
Whenever the conditions Lemma~\ref{quotient} are satisfied  we  denote  
$$(X/G)^\approx:= \iota((X/G)^\sim)\subseteq \Cl(X)$$
with the induced topology. 
Hence $(X/G)^\approx$ is a homeomorphic copy of $(X/G)^\sim$.
\end{definition}

\subsection{Upper topology and inner hull-kernel topology}\label{topologies}
Let $G$ be a second countable locally compact group $G$, with its   $C^*$-algebra $C^*(G)$.
We denote by 
$\Prim(G)$ the space of primitive ideals in $C^*(G)$ with the hull-kernel topology.
We denote by
$\what{G}$ the space of equivalence  classes $[\pi]$ of irreducible unitary representations $\pi$  of $G$ endowed with 
the hull-kernel topology, that is, 
 the inverse image topology  for the canonical 
map $\kappa\colon \what{G} \to \Prim(G)$, $\kappa([\pi])=\Ker \pi$.
We also note that, by the definition of the topology of $\what{G}$, the surjective mapping $\kappa$ is both open and closed.
Here and throughout this paper, for every 
continuous unitary 
representation $\pi$ of $G$, we denote also by $\pi$ its corresponding 
nondegenerate 
$*$-representation of $C^*(G)$. 
In particular, 
we identify $\what{G} = \what{C^*(G)}$. 

We denote by $\Tc(G)$
the set of all equivalence classes of unitary representations of $G$ in separable complex Hilbert spaces.
For any  $\Sc_1$, $\Sc_2\subseteq \Tc(G)$, we write $\Sc_1\preceq\Sc_2$ ($\Sc_1$ is \emph{weakly contained in} $\Sc_2$) if $\bigcap\limits_{\pi\in\Sc_1}\Ker\pi\supseteq \bigcap\limits_{\pi\in\Sc_2}\Ker\pi$, 
and $\Sc_1\approx\Sc_2$ ($\Sc_1$ is \emph{weakly equivalent} to $\Sc_2$) if $\Sc_1\preceq\Sc_2$ and $\Sc_2\preceq\Sc_1$.

On  $\Tc(G)$ we consider the inner hull-kernel topology, which restricted to $\what{G}$ is the hull-kernel topology. 
(See \cite{Fe62}.)
We recall that for every $*$-representation $T$ of the $C^*$-algebra $C^*(G)$ 
there exists a unique closed subset $\spec(T) \subseteq\widehat{G}$\textemdash the spectrum of the unitary equivalence class of $*$-representations $[T]$\textemdash which is weakly equivalent to~$[T]$, 
and 
$$\spec(T)=\{[\tau]\in\widehat{G}\mid \tau\preceq T \}.$$
(See \cite[Def. 3.4.6]{Dix77}.)

We note that for simplicity, for a unitary representation $T$ of $G$,  we sometimes write $T$ instead of $[T]$,  and $T\approx T_0$ when 
$\{[T]\}  \approx \{[T_0]\}$, where $T_0$ is another unitary representation of $G$.

\begin{remark}\label{upper-tops}
	\normalfont
The inner hull-kernel topology of $\Tc(G)$ is the coarsest topology for which the mapping 
	$$\spec\colon\Tc(G)\to\Cl(\widehat{G})$$
	is continuous. 
\end{remark}

\begin{lemma}\label{kappahat}
	Let $G$ be a second countable locally compact group. 
	Define  the map 
	$$
	\begin{gathered}
	\what{\kappa} \colon \Cl(\what{G})\to \Cl(\Prim(G)), \\
		 \what{\kappa}(\Fc): = \{\kappa([\pi]) \mid [\pi]\in \Fc\} \; \text{for all }  \Fc \in \Cl(\what{G}).
		\end{gathered}
	$$ 
	Then $\what{\kappa}$ is continuous.
\end{lemma}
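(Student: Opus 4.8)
The statement to prove is that $\widehat{\kappa}\colon \Cl(\widehat{G})\to\Cl(\Prim(G))$ is continuous, where on each side $\Cl(\cdot)$ carries the upper topology. The natural strategy is to verify continuity against a sub-base of closed sets of $\Cl(\Prim(G))$: by Definition~\ref{ordtop}, such a sub-base consists of $\Prim(G)$ itself together with the sets $\downarrow\! F$ for $F\subseteq\Prim(G)$ closed. So it suffices to show that $\widehat{\kappa}^{-1}(\downarrow\! F)$ is closed in $\Cl(\widehat{G})$ for every closed $F\subseteq\Prim(G)$.

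Let me analyze $\widehat{\kappa}^{-1}(\downarrow\! F)$. By definition of $\widehat{\kappa}$, a closed set $\Fc\in\Cl(\widehat{G})$ lies in $\downarrow\!(\widehat{\kappa})$-preimage iff $\{\kappa([\pi])\mid[\pi]\in\Fc\}\subseteq F$. Since $\kappa$ is continuous, the set $\kappa^{-1}(F)\subseteq\widehat{G}$ is closed, and I claim the condition above is equivalent to $\Fc\subseteq\kappa^{-1}(F)$. Indeed, $\{\kappa([\pi])\mid[\pi]\in\Fc\}\subseteq F$ says exactly that every $[\pi]\in\Fc$ satisfies $\kappa([\pi])\in F$, i.e. $[\pi]\in\kappa^{-1}(F)$. (One small point to watch: $\widehat{\kappa}(\Fc)$ as defined is the set-theoretic image $\kappa(\Fc)$, which need not be closed in $\Prim(G)$; but for membership in $\downarrow\! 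F$ one only needs $\kappa(\Fc)\subseteq F$, since $F$ is closed and $\downarrow\! F$ consists of all closed subsets \emph{contained} in $F$ — so whether $\widehat{\kappa}(\Fc)$ is literally interpreted as $\kappa(\Fc)$ or its closure is immaterial here, as $\kappa(\Fc)\subseteq F$ iff $\overline{\kappa(\Fc)}\subseteq F$.)

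Therefore $\widehat{\kappa}^{-1}(\downarrow\! F)=\{\Fc\in\Cl(\widehat{G})\mid \Fc\subseteq\kappa^{-1}(F)\}=\ \downarrow\!\bigl(\kappa^{-1}(F)\bigr)$, which is a sub-basic closed set of $\Cl(\widehat{G})$ by Definition~\ref{ordtop}, since $\kappa^{-1}(F)$ is closed in $\widehat{G}$ by continuity of $\kappa$. The preimage of the other sub-basic closed set $\Prim(G)$ is all of $\Cl(\widehat{G})$, trivially closed. Since preimages of a sub-base of closed sets under $\widehat{\kappa}$ are closed, $\widehat{\kappa}$ is continuous.

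**Expected main obstacle.** There is essentially no deep obstacle; the only thing requiring a little care is the bookkeeping in the preceding paragraph, namely making sure that the defining condition ``$\widehat{\kappa}(\Fc)\in\ \downarrow\! F$'' genuinely unwinds to the clean inclusion ``$\Fc\subseteq\kappa^{-1}(F)$'' regardless of how one reads the (possibly non-closed) image set $\widehat{\kappa}(\Fc)$ — and this is handled by the observation that $\downarrow\! F$ is downward closed and $F$ is closed. Everything else is a direct application of the sub-base criterion for continuity together with continuity of $\kappa$.
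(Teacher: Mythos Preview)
Your argument is correct and is essentially the paper's own proof: both check continuity on the sub-basic closed sets $\downarrow\! F$ and compute $\widehat{\kappa}^{-1}(\downarrow\! F)=\ \downarrow\!\kappa^{-1}(F)$, which is closed since $\kappa$ is continuous. One small correction: your parenthetical worry that $\kappa(\Fc)$ ``need not be closed'' is unfounded, since $\kappa\colon\widehat{G}\to\Prim(G)$ is a closed map (as recorded in Subsection~\ref{topologies}), and indeed the paper begins its proof by invoking exactly this to confirm that $\widehat{\kappa}$ genuinely lands in $\Cl(\Prim(G))$.
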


\begin{proof} 
	The map $\widehat{\kappa}$ takes values in $\Cl(\Prim(G))$ since $\kappa\colon\widehat{G}\to\Prim(G)$ is a closed map. 
	To prove that $\widehat{\kappa}$ is continuous, it
 is enough to show that for every $\Sc \in \Cl(\Prim(G))$, the set 
	$\what{\kappa}^{-1}(\downarrow\! \Sc)$ is closed in $\Cl(\what{G})$. 
	By a simple computation we see that $\what{\kappa}^{-1}(\downarrow\! \Sc)= \downarrow\!\kappa^{-1}(\Sc)$, and 
	the assertion above follows since $\kappa^{-1}(\Sc)$ is a closed subset of $\what{G}$.
	\end{proof}

\begin{definition}\label{supp}
	\normalfont
	We keep the notation above. 
	For every $*$-representation $T$ of the $C^*$-algebra $C^*(G)$ we define the support of the unitary equivalence class 
	$[T]\in \Tc(G)$ by
	$$ \supp(T) := \what{\kappa} (\spec(T))\in \Cl(\Prim(G))).$$
	\end{definition}

\begin{remark}\label{supp_cont}
	\normalfont
	The map $\supp \colon \Tc(G) \to \Cl(\Prim(G))$ is continuous. 
	This is a consequence of the definition, Remark~\ref{upper-tops} and Lemma~\ref{kappahat}.
	\end{remark}

\begin{lemma}\label{suppspec}
Let $L$ be a locally compact group, and let  $\kappa \colon \what{L}\to  \Prim(L)$, $\pi \mapsto \Ker \pi$ be its canonical map.
	Then for  $T\in \Tc(L)$ and $A\in \Cl(\what L)$, 
	$\spec(T)=A$ if and only if $\supp(T) =\kappa (A)$.
	\end{lemma}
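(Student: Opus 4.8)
The plan is to reduce the claimed equivalence to a purely set‑theoretic identity for the map $\kappa$, exploiting two facts already recorded: $\kappa\colon\widehat L\to\Prim(L)$ is a continuous closed surjection, and the hull–kernel topology on $\widehat L$ is \emph{by definition} the inverse‑image topology determined by $\kappa$.

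First I would unwind the definitions. By the defining formula for $\widehat\kappa$ in Lemma~\ref{kappahat}, the element $\widehat\kappa(\Fc)\in\Cl(\Prim(L))$ is literally the image set $\kappa(\Fc)=\{\Ker\pi\mid[\pi]\in\Fc\}$ (this is a closed subset of $\Prim(L)$ precisely because $\kappa$ is a closed map), so by Definition~\ref{supp} we have $\supp(T)=\kappa(\spec(T))$ as subsets of $\Prim(L)$. Hence the asserted identity $\supp(T)=\kappa(A)$ is nothing but the set equality $\kappa(\spec(T))=\kappa(A)$. The implication ``$\spec(T)=A\Rightarrow\supp(T)=\kappa(A)$'' is then immediate.

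For the converse I would record the key observation that every closed subset $F\subseteq\widehat L$ is saturated for $\kappa$, i.e.\ $F=\kappa^{-1}(\kappa(F))$. Indeed, since the topology of $\widehat L$ is the inverse‑image topology for $\kappa$, one has $F=\kappa^{-1}(S)$ for some closed $S\subseteq\Prim(L)$; surjectivity of $\kappa$ gives $\kappa(F)=\kappa(\kappa^{-1}(S))=S$, whence $\kappa^{-1}(\kappa(F))=\kappa^{-1}(S)=F$. Now $\spec(T)$ is closed in $\widehat L$ (by the uniqueness clause recalled just before Remark~\ref{upper-tops}) and $A$ is closed in $\widehat L$ by hypothesis, so applying this identity to $F=\spec(T)$ and using $\kappa(\spec(T))=\supp(T)=\kappa(A)$ yields
$$\spec(T)=\kappa^{-1}(\kappa(\spec(T)))=\kappa^{-1}(\kappa(A))=A,$$
as required.

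I do not expect a genuine obstacle here; the statement is essentially formal. The one point that deserves a moment's care is that it is not enough to invoke that $\kappa$ is closed and continuous: the argument really uses that the topology on $\widehat L$ is the \emph{initial} topology for $\kappa$, since that is exactly what forces closed subsets of $\widehat L$ to be $\kappa$‑saturated, and hence makes $\kappa$ behave ``injectively'' on the closed sets in play.
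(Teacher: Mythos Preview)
Your proof is correct and follows the same route as the paper: both reduce the equivalence to the identity $\kappa^{-1}(\kappa(A))=A$ for closed $A\subseteq\widehat L$, together with $\supp(T)=\kappa(\spec(T))$. Your justification of the saturation property via the initial-topology characterization of $\widehat L$ is in fact more precise than the paper's brief appeal to $\kappa$ being closed, open, and surjective.
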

\begin{proof}
	This follows from the fact that, since $\kappa$ is an  closed  and open surjective map,  
$\kappa^{-1}(\kappa(A))=A$, and $\supp(T)=\kappa(\spec(T))$ for every $A\in \Cl(\what L)$.
\end{proof}

\begin{lemma}\label{equiv}
	Let $G$, $L$ be locally compact groups such that $G$ has  a continuous action 
	$G\times C^*(L)\to C^*(L)$ by automorhisms of $C^*(L)$,
	 and let
	$\kappa \colon \what{L}\to  \Prim(L)$, $\pi \mapsto \Ker \pi$ be the canonical map corresponding to $L$.
	Then the map 
	$$ \kappa^\approx \colon (\what{L}/G)^\approx \to (\Prim(L)/G)^\approx, 
	\quad \overline{G\cdot \pi}\mapsto \overline{G\cdot  \kappa(\pi)} 
		$$
	is a homeomorphism.
	\end{lemma}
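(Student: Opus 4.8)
The plan is to exhibit $\kappa^\approx$ as a restriction of the continuous map $\what\kappa\colon\Cl(\what L)\to\Cl(\Prim(L))$ of Lemma~\ref{kappahat}, and then to check that this restriction is a bijection with continuous inverse. A continuous action of $G$ on $C^*(L)$ by automorphisms induces continuous actions of $G$ on $\what L=\what{C^*(L)}$ and on $\Prim(L)$ (see \cite{Wi07}), so Lemma~\ref{quotient} applies with $X=\what L$ and with $X=\Prim(L)$, producing the subspaces $(\what L/G)^\approx\subseteq\Cl(\what L)$ and $(\Prim(L)/G)^\approx\subseteq\Cl(\Prim(L))$ of Definition~\ref{2xtild3}. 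Moreover $\kappa\colon\what L\to\Prim(L)$ is $G$-equivariant, since passing to kernels commutes with the automorphisms of $C^*(L)$, and it is continuous, open, closed and surjective; these properties carry the whole argument.

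The single structural fact I would isolate first is that, because the topology of $\what L$ is by definition the inverse image topology along $\kappa$, every closed subset $F\subseteq\what L$ is $\kappa$-saturated, that is, $F=\kappa^{-1}(\kappa(F))$ with $\kappa(F)$ closed in $\Prim(L)$. Combined with equivariance, continuity, closedness and surjectivity of $\kappa$, this yields for every $\pi\in\what L$
$$\kappa\bigl(\overline{G\cdot\pi}\bigr)=\overline{G\cdot\kappa(\pi)}\qquad\text{and consequently}\qquad\kappa^{-1}\bigl(\overline{G\cdot\kappa(\pi)}\bigr)=\overline{G\cdot\pi};$$
the first equality follows by sandwiching $\kappa(\overline{G\cdot\pi})$ between $\overline{G\cdot\kappa(\pi)}$ and $\overline{\kappa(G\cdot\pi)}$ (which coincide), and the second then follows from saturation of the closed set $\overline{G\cdot\pi}$. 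By the first equality $\what\kappa$ maps $(\what L/G)^\approx$ into $(\Prim(L)/G)^\approx$ and its restriction there is exactly $\kappa^\approx$; in particular $\kappa^\approx$ is well defined and continuous. Surjectivity of $\kappa^\approx$ is inherited from surjectivity of $\kappa$, and injectivity is read off the second equality: if $\overline{G\cdot\kappa(\pi)}=\overline{G\cdot\kappa(\sigma)}$, then applying $\kappa^{-1}$ gives $\overline{G\cdot\pi}=\overline{G\cdot\sigma}$.

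It then remains to prove that $(\kappa^\approx)^{-1}$ is continuous. Since $\kappa$ is continuous, $S\mapsto\kappa^{-1}(S)$ is a well-defined map $\Cl(\Prim(L))\to\Cl(\what L)$, and by the second displayed equality its restriction to $(\Prim(L)/G)^\approx$ is $(\kappa^\approx)^{-1}$; so it suffices to show that $S\mapsto\kappa^{-1}(S)$ is continuous for the upper topologies. For a subbasic closed set $\downarrow\! F_0$ of $\Cl(\what L)$ with $F_0\subseteq\what L$ closed, saturation of $F_0$ and surjectivity of $\kappa$ give $\{S\mid\kappa^{-1}(S)\subseteq F_0\}=\downarrow\!\kappa(F_0)$, which is closed in $\Cl(\Prim(L))$ because $\kappa(F_0)$ is closed; this is the required continuity. (Alternatively one can verify that the continuous bijection $\kappa^\approx$ is open by transporting through the quasi-orbit maps of Lemma~\ref{quotient} and using that $\kappa$ is open.) The one genuinely substantive point is the saturation identity $\kappa^{-1}(\overline{G\cdot\kappa(\pi)})=\overline{G\cdot\pi}$; everything else is bookkeeping with Lemmas~\ref{quotient} and~\ref{kappahat} and with the definition of the upper topology.
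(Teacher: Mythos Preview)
Your proof is correct. The saturation identity $F=\kappa^{-1}(\kappa(F))$ for closed $F\subseteq\what L$ (which follows from the definition of the hull-kernel topology on $\what L$ as the inverse-image topology along $\kappa$) carries everything, and your computation $\{S\mid\kappa^{-1}(S)\subseteq F_0\}=\downarrow\!\kappa(F_0)$ is the right way to check continuity of the inverse at the level of the ambient $\Cl$-spaces.

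The paper takes precisely the alternative you mention in parentheses: it does not pass through $\what\kappa$ or through the explicit inverse $S\mapsto\kappa^{-1}(S)$, but instead argues via the commutative square
\[
\xymatrix{
\what L \ar[r]^{\kappa} \ar[d] & \Prim(L) \ar[d]\\
(\what L/G)^\approx \ar[r]^-{\kappa^\approx} & (\Prim(L)/G)^\approx
}
\]
where the vertical arrows are the quasi-orbit maps of Lemma~\ref{quotient} (open, continuous, surjective) and $\kappa$ is open, closed and continuous; from this it reads off at once that $\kappa^\approx$ is both open and continuous. Your route has the mild advantage of never using openness of $\kappa$---closedness and surjectivity together with saturation suffice---whereas the diagram argument is shorter and reuses Lemma~\ref{quotient} rather than re-examining the upper topology. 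Either way the substantive core is the same: the bijection between orbit closures coming from $\overline{A}=\kappa^{-1}(\overline{\kappa(A)})$.
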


\begin{proof}
	We recall the canonical identification $\widehat{C^*(L)} = \what L$, therefore
	$G$ acts continuously on $\what L$ 
	and $\Prim(L)$.
	Then for every $A\subseteq \what{L}$, $\overline{A}=\kappa^{-1}(\overline{\kappa(A)})$, since $\kappa$ is open, closed and surjective. 
	It follows  that for $A_1, A_2\subseteq \what{L}$,  $\overline{A_1}=\overline{A_2}$ if and only if
	 $\overline{\kappa(A_1)}=\overline{\kappa(A_2)}$.
	 Thus for $\pi_1, \pi_2\in \what{L}$ we have that 
	 $$ 
	 \overline{G\cdot \pi_1} = \overline{G\cdot \pi_2}
	  \Leftrightarrow 
	  \overline{\kappa(G\cdot \pi_1)}=  \overline{\kappa(G\cdot \pi_2)}
	\Leftrightarrow 
	\overline{G\cdot \kappa(\pi_1)}=  \overline{G\cdot \kappa(\pi_2)}.
	$$
	Therefore the map $\kappa^\approx$ is well-defined and injective.
Since $\kappa$ is surjective, $\kappa^\approx$ is surjective, as well. 
Hence $\kappa^\approx$ is bijective. 

The map of $\kappa^\approx$ is open and continuous  since  the diagram
$$
\xymatrix{
	\what{L}	\ar[r]^{\kappa} \ar[d]  &  \Prim(L) \ar[d] \\
	(\what{L}/G)^\approx \ar[r]^-{\kappa^\approx} & (\Prim(L)/G)^\approx
}
$$
is commutative, where the down-arrows are the maps $\what L\to (\what{L}/G)^\approx$, 
$\pi \mapsto \overline{G\cdot \pi}$,  and  $\Prim(L)\to (\Prim(L)/G)^\approx $, 
$\Pc \mapsto \overline{G\cdot \Pc}$, which are open and continuous, while $\kappa$ 
is continuous, closed and open. Hence $\kappa^\approx$ is a homeomorphism.
	\end{proof}

\subsection{Actions of Lie groups on abelian Lie groups, and the absence of quasi-compact open sets in the space of quasi-orbits}
Let $G$ be a topological group with its center $Z$. 
For every unitary irreducible representation $\pi\colon G\to\Bc(\Hc_\pi)$ there exists a character $\chi_\pi\colon Z\to\TT$ with $\pi(g)=\chi_\pi(g)\id_{\Hc_\pi}$ for every $g\in Z$. 
The character $\chi_\pi$ actually depends on the unitary equivalence class of $\pi$ rather than on $\pi$ itself, 
hence we obtain a well-defined mapping 
\begin{equation}\label{RG_def}
R^G\colon \widehat{G}\to\widehat{Z},\quad [\pi]\mapsto\chi_\pi.
\end{equation}

\begin{lemma}\label{res}
Let $G$ be a separable locally compact group with its center $Z$. 
Then the following assertions hold: 
\begin{enumerate}[{\rm(i)}]
	\item\label{res_item1} The mapping 
	$R^G\colon \widehat{G}\to\widehat{Z}$  
	is surjective and continuous.  
	\item\label{res_item3} There exists a surjective continuous  mapping 
	$\Res^G_Z\colon \Prim(G)\to\widehat{Z}$ satisfying ${\rm Res}^G_Z(\Ker\pi)=R^G([\pi])$ for every $[\pi]\in\widehat{G}$. 
	\item\label{res_item2} If $G$ is amenable, then
	$R^G\colon \widehat{G}\to\widehat{Z}$  and  $\Res^G_Z\colon \Prim(G)\to\widehat{Z}$ are open maps.  
	 \end{enumerate}
\end{lemma}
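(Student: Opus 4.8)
The plan is to reduce everything to the canonical map $\kappa\colon\widehat{G}\to\Prim(G)$ and to the known Fell-topology properties of the restriction map to central subgroups, then bootstrap openness from amenability via weak containment.

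For part~\eqref{res_item1}: First I would observe that $R^G$ is essentially the composition of the restriction-of-representations procedure $[\pi]\mapsto[\pi|_Z]$ with the homeomorphism $\widehat{Z}\cong\{$one-dimensional reps of $Z\}$; since $\pi|_Z$ is a multiple of the scalar character $\chi_\pi$ by Schur's lemma, $R^G([\pi])=\chi_\pi$ is precisely $\spec(\pi|_Z)$ viewed as a point of $\widehat Z$. Continuity of $R^G$ then follows from continuity of the restriction functor $\Tc(G)\to\Tc(Z)$ for the inner hull-kernel topologies (this is a standard consequence of Fell's results, cf.\ \cite{Fe62}, together with Remark~\ref{upper-tops}): weak containment is preserved under restriction to a subgroup, so $[\pi_i]\to[\pi]$ in $\widehat G$ forces $\pi_i|_Z\preceq$ (a subrep weakly containing) $\pi|_Z$ in the limit, whence $\chi_{\pi_i}\to\chi_\pi$. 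Surjectivity is the observation that for any character $\chi\in\widehat Z$ one may induce $\chi$ up to $G$ (or, since $Z$ is central, twist the trivial representation) to obtain a unitary representation of $G$ whose irreducible subrepresentations all have central character $\chi$; amenability is not needed here, only that $G$ is separable locally compact so that such representations decompose into irreducibles with the expected central character. The main care needed is the passage from the abstract assertion ``restriction is continuous for inner hull-kernel topologies'' to the pointwise statement about $R^G$; this is where I expect to invoke \cite{Dix77} and \cite{Fe62} carefully rather than reprove things.

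For part~\eqref{res_item3}: The key point is that $R^G$ is constant on the fibres of $\kappa$, i.e.\ $\Ker\pi_1=\Ker\pi_2$ implies $\chi_{\pi_1}=\chi_{\pi_2}$. This holds because the central character is determined by the kernel: $\chi_\pi$ is recovered from $\Ker\pi$ as the unique character $\chi$ of $Z$ such that $f\mapsto\int_Z \overline{\chi(z)}\,(\text{something})$\,\ldots\ more cleanly, $\Ker\pi\supseteq\Ker\pi'$ with both primitive forces the two ideals equal and hence $\pi\approx\pi'$, so they have the same central character. Therefore $R^G$ factors through $\kappa$ as $R^G=\Res^G_Z\circ\kappa$ for a unique map $\Res^G_Z\colon\Prim(G)\to\widehat Z$, which is automatically surjective (because $R^G$ is) and continuous (because $\kappa$ is an open \emph{and closed} continuous surjection, so $\Prim(G)$ carries the quotient topology from $\widehat G$, and a map out of a quotient is continuous iff its composition with the quotient map is).

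For part~\eqref{res_item2}: Here amenability enters. The cleanest route is: for $G$ amenable, weak containment governs the Fell topology in a symmetric way that makes ``$[\pi_i]\to[\pi]$ in $\widehat G$'' equivalent to convergence that can be detected by restriction to any closed subgroup together with a converse lifting; concretely, I would use that for amenable $G$ the map $R^G$ is open by exhibiting, for a basic open set $U\subseteq\widehat G$ and $[\pi]\in U$, a whole neighbourhood of $\chi_\pi$ in $\widehat Z$ of characters realized by irreducibles in $U$ — using that characters near $\chi_\pi$ can be absorbed by tensoring $\pi$ with suitable one-dimensional (hence, by amenability, weakly contained) perturbations, keeping the kernel in the prescribed Fell-open set. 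Once $R^G$ is open, openness of $\Res^G_Z$ is immediate from $\Res^G_Z\circ\kappa=R^G$ together with surjectivity of $\kappa$: for $V\subseteq\Prim(G)$ open, $\Res^G_Z(V)=R^G(\kappa^{-1}(V))$ is open. I expect part~\eqref{res_item2} to be the genuine obstacle — the factorization and continuity in \eqref{res_item1} and \eqref{res_item3} are formal, but openness of $R^G$ for amenable $G$ requires a real argument, presumably citing the relevant openness result for restriction maps to closed normal (here central) subgroups under amenability from the literature (e.g.\ via the structure of the Fell topology on $\widehat G$ for amenable $G$, or by a direct induced-representation continuity argument).
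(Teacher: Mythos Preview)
Your treatment of parts~\eqref{res_item1} and~\eqref{res_item3} is broadly correct and close to the paper's, with one wobble: the claim in~\eqref{res_item3} that ``$\Ker\pi\supseteq\Ker\pi'$ with both primitive forces the two ideals equal'' is false in general (primitive ideals can be strictly comparable). The fact you actually need is simpler: $\Ker\pi_1=\Ker\pi_2$ gives $\pi_1\approx\pi_2$, hence $\pi_1|_Z\approx\pi_2|_Z$, and since $\widehat Z$ is Hausdorff this forces $\chi_{\pi_1}=\chi_{\pi_2}$. The paper then cites \cite{Wi07} for the continuous factorization through $\Prim(G)$.

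The genuine gap is in part~\eqref{res_item2}. Your tensoring idea does not work as stated: a character $\chi'\in\widehat Z$ close to $\chi_\pi$ need not be of the form $(\psi|_Z)\cdot\chi_\pi$ for a one-dimensional representation $\psi$ of $G$, because characters of the centre do not in general extend to characters of $G$ (indeed $\Hom(G,\TT)|_Z$ can be a proper, even trivial, subgroup of $\widehat Z$). You then retreat to ``presumably citing the relevant openness result,'' which is not an argument. The paper's route is concrete and quite different. By Gootman's weak Frobenius reciprocity \cite[Th.~3.3]{Go76} (this is exactly where amenability is used), one has $\pi\preceq\Ind_Z^G(\chi)\iff\chi\preceq\pi|_Z$, so the fibre $(\Res^G_Z)^{-1}(\chi)$ is precisely $\supp(\Ind_Z^G(\chi))$. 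The resulting map $S_{00}\colon\widehat Z\to\Cl(\Prim(G))$, $\chi\mapsto\supp(\Ind_Z^G(\chi))$, is continuous (continuity of induction composed with continuity of $\supp$). Openness of $\Res^G_Z$ then follows from the general criterion that a surjection $f\colon X\to Y$ is open iff $\{y\in Y\mid f^{-1}(y)\subseteq C\}$ is closed for every closed $C\subseteq X$: here that set equals $S_{00}^{-1}(\downarrow\! C)$, closed because $S_{00}$ is continuous and $\downarrow\! C$ is closed in $\Cl(\Prim(G))$. Finally $R^G=\Res^G_Z\circ\kappa$ is open because $\kappa$ is.
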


\begin{proof}
\eqref{res_item1}	
By 
\cite[Prop. 18.1.5]{Dix77} we obtain that, 
for every subset $S\subseteq \widehat{G}$, $R^G$ maps the closure of $S$ into the closure of $R^G(S)$, 
hence $R^G$ is continuous. 

To prove that $R^G$ is surjective, let $\chi\in\widehat{Z}$ be arbitrary and let $\pi_\chi\colon G\to\Bc(\Hc)$  be the unitary representation induced from~$\chi$. 
Since $G$ is separable, it follows by \cite[Th. 4.5]{Fe62} that $\pi\vert_Z$ is weakly equivalent to the orbit of $\chi$ under the natural action of $G$ on $Z$. 
However, since $Z$ is the center of $G$, that $G$-orbit of $\chi$ is the singleton~$\{\chi\}\subseteq\widehat{Z}$, 
and therefore $R^G([\pi_\chi])=\chi$. 

\eqref{res_item3}
The mapping $\Res^G_Z$ exists and is continuous by \eqref{res_item1}, 
using \cite[Lemma C.6]{Wi07} or \cite[Cor. 6.15]{Wi07}.

\eqref{res_item2}
As noted above, the $G$-orbit of any $\chi\in\widehat{Z}$ is $\{\chi\}\subseteq\widehat{Z}$ 
hence, since $\widehat{Z}$ is Hausdorff, it follows that $G$ acts minimally on~$\widehat{Z}$. 
Then, since $G$ is amenable, for any $\chi\in \widehat{G}$ and $\pi\in\widehat{G}$ one has $\pi\preceq\Ind_Z^G(\chi)$ if and only if $\chi\preceq\pi\vert_Z$ 
by \cite[Th. 3.3]{Go76}. 
Taking into account the definition of $R^G\colon \widehat{G}\to\widehat{Z}$, we then obtain
$$(R^G)^{-1}(\chi)
=\{\pi\in\widehat{G}\mid \chi\preceq\pi\vert_Z\}
=\{\pi\in\widehat{G}\mid \pi\preceq\Ind_Z^G(\chi)\}
=\spec(\Ind_Z^G(\chi)).$$
Therefore 
$$(\Res^G_Z)^{-1}(\chi)=\supp(\Ind_Z^G(\chi))=:S_{00}(\chi)$$
for arbitrary $\chi\in\widehat{Z}$. 
The mapping $S_{00}\colon \widehat{Z}\to\Cl(\Prim(G))$  defined  this way
is continuous as a a composition of continuous maps. 
For every $C\in \Cl(\Prim(G))$ 
we have
$$(\Res^G_Z)^\sharp(C):=\{\chi\in\widehat{Z}\mid (R^G)^{-1}(\chi)\subseteq C\}=S_{00}^{-1}(\downarrow\! C), $$
hence $(\Res^G_Z)^\sharp(C)\subseteq \widehat{Z}$ is a closed subset since 
$\downarrow\!C\subseteq \Cl(\Prim(G))$ is a closed subset and  
the map $S_{00}$ is continuous. 
Since  $\Res^G_Z\colon \Prim(G)\to\widehat{Z}$ is surjective, 
it then follows that $\Res^G_Z$ is an open mapping. 
Finally, $R^G=(\Res^G_Z)\circ\kappa$ is open since  $\kappa\colon\widehat{G}\to\Prim(G)$ is an open map.
\end{proof}

\begin{remark}
	\normalfont 
	Lemma~\ref{res} provides an alternative approach to \cite[Prop. 4.1]{DP17} that does not need continuous fields of $C^*$-algebras. 
	The connection with the earlier approach is that,  
	if the locally compact group $G$ is amenable, then one has the natural open continuous surjective mapping $\Res^G_Z\colon \Prim(G)\to\widehat{Z}$ by Lemma~\ref{res}, hence $C^*(G)$ has the structure of a continuous $C^*$-bundle on~$\widehat{Z}$ by \cite[Th. C.26]{Wi07}. 
\end{remark}

Before proceeding we need a definition.

\begin{definition}\label{roots}
	\normalfont
Let $K$ be a connected Lie group with its Lie algebra $\kg$,
$\Vc$ a finite-dimensional real vector space
and $\rho\colon K\to\End(\Vc)$ a representation of $K$ on $\Vc$.
By a slight abuse of notation, for every $T\in\End(\Vc)$ we denote by
again by $T$ its corresponding extension to a $\CC$-linear operator on
the complexification $\Vc_{\CC}:=\CC\otimes_{\RR}\Vc$, that is, the operator 
$\id_{\CC}\otimes T$ is denoted again by $T$. 
Then an $\RR$-linear functional $\lambda\colon\kg\to\CC$ is a called a
\emph{weight} of the representation $\rho$ if there exists $w\in
\Vc_{\CC}\setminus\{0\}$ for which $\de\rho(X)w=\lambda(X)w$ for every
$X\in\kg$.
We say that $\lambda$ is a \emph{purely imaginary weight} if
$\lambda(\kg)\subseteq\ie\RR$.
\end{definition}
For instance, if $K$ is a solvable Lie group and the spectrum of
$\de\rho(X)$ is contained in $\ie\RR$ for every $X\in\kg$, then there
exists a purely imaginary weight of $\rho$ as a direct consequence of
Lie's theorem on representations of solvable Lie algebras.
If $K$ is actually a nilpotent Lie group, then every weight of $\rho$ is
purely imaginary if and only if the spectrum of $\de\rho(X)$ is
contained in $\ie\RR$ for every $X\in\kg$, by the weight-space
decomposition of representations of nilpotent Lie algebras.

\begin{remark}\label{rem_abelian}
	\normalfont
	Let $Z$ be an abelian Lie group, connected and simply connected. 
	We may then assume that $Z$ is a finite dimensional real vector space.
	Let $K$ a locally compact group, and $\alpha\colon K \to \End(Z )$ be a continuous representation; it induces a continuous action
	$K\times \what{Z}\to \what{Z} $.
	On the other hand $\what Z$ can be identified with $ Z^*$, by $\xi \to \chi_\xi$, with $\chi_\xi(z)=\ee^{\ie \langle \xi, z\rangle}$, where
	$\langle\cdot  , \cdot \rangle\colon Z^*\times Z\to \RR$ is the duality bracket.
	With this identification, $\alpha$ induces a representation $\alpha^*\colon K  \to \End(Z^*)$, which is fact the contragredient of $\alpha$. 
	\end{remark}

\begin{lemma}\label{abelian}
	Let $Z$ be an abelian Lie group, connected and simply connected,  $K$ a connected and simply connected Lie group with its Lie algebra $\kg$, and  a continuous representation $\alpha\colon K \to \End(Z )$.
	Assume that the action $\de \alpha \colon \kg \to \End(Z)$ has at least a purely imaginary weight. 
	 Then $(Z^*/K)^\sim$ contains no non-empty open quasi-compact subsets.	
	\end{lemma}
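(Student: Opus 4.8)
The plan is to construct, directly out of the purely imaginary weight, a continuous, open, surjective and $K$-invariant map $h\colon Z^*\to[0,\infty)$, to push it down to a continuous open surjection $\overline h\colon (Z^*/K)^\sim\to[0,\infty)$, and then to conclude by observing that the half-line $[0,\infty)$ --- being a connected, non-quasi-compact Hausdorff space --- contains no nonempty quasi-compact open subset, so neither does $(Z^*/K)^\sim$.

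First I would pass from the weight to a common eigenvector of the whole group. Pick a purely imaginary weight $\lambda\colon\kg\to\CC$ of $\de\alpha$ and $z_0\in Z_\CC\setminus\{0\}$ with $\de\alpha(X)z_0=\lambda(X)z_0$ for \emph{every} $X\in\kg$. Since $\alpha(\exp X)=\ee^{\de\alpha(X)}$, the vector $z_0$ is a common eigenvector of all $\alpha(\exp X)$, with eigenvalue $\ee^{\lambda(X)}$ of modulus $1$ because $\lambda(X)\in\ie\RR$; as $K$ is connected it is generated by $\exp\kg$, so $z_0$ is a common eigenvector of every $\alpha(k)$, say $\alpha(k)z_0=\chi(k)z_0$ with $|\chi(k)|=1$. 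Extending the duality bracket to a $\CC$-bilinear pairing $Z^*\times Z_\CC\to\CC$ and using that $K$ acts on $Z^*$ through the contragredient $\alpha^*$ (Remark~\ref{rem_abelian}), the function $h(\xi):=|\langle\xi,z_0\rangle|$ satisfies $h(k\cdot\xi)=|\langle\xi,\alpha(k^{-1})z_0\rangle|=|\chi(k^{-1})|\,h(\xi)=h(\xi)$, so it is $K$-invariant; it is plainly continuous, and, being the composition of the nonzero $\RR$-linear map $\xi\mapsto\langle\xi,z_0\rangle$ --- an open surjection onto its image $W_0$, a real $1$- or $2$-dimensional subspace of $\CC$ --- with the modulus $W_0\to[0,\infty)$, it is open and surjective onto $[0,\infty)$. (Working with $|\langle\cdot,z_0\rangle|$ throughout avoids treating the case $\lambda=0$ separately.)

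Then I would descend $h$. Being continuous and $K$-invariant, $h$ is constant on every orbit closure, hence on every fibre of the quasi-orbit map $Q\colon Z^*\to(Z^*/K)^\sim$, which by Lemma~\ref{quotient} is a continuous open surjection, in particular a quotient map. Thus $h=\overline h\circ Q$ for a unique $\overline h\colon(Z^*/K)^\sim\to[0,\infty)$, and $\overline h$ is then continuous (as $Q$ is a quotient map), surjective (as $h$ is), and open, since for open $U$ one has $\overline h(U)=h(Q^{-1}(U))$. Finally, if $\mathcal O\subseteq(Z^*/K)^\sim$ were a nonempty quasi-compact open set, then $\overline h(\mathcal O)$ would be a nonempty, quasi-compact, open subset of $[0,\infty)$; being quasi-compact in a Hausdorff space it would be closed, hence clopen, hence all of the connected space $[0,\infty)$, which is absurd since $[0,\infty)$ is not quasi-compact.

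The step I expect to need the most care is the passage from ``$z_0$ is a weight vector'' to ``$z_0$ is a common eigenvector of all $\alpha(k)$ with unimodular eigenvalue'', together with the verification that $h$ is an \emph{open} map: this is where it is essential that the eigenvalue equation holds for all $X\in\kg$, that $K$ is connected, that the action on $Z^*$ is the contragredient of $\alpha$ (Remark~\ref{rem_abelian}), and that a surjective $\RR$-linear map followed by the modulus is open.
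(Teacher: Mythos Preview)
Your proof is correct and takes a genuinely different route from the paper's. Both arguments construct a continuous, open surjection from $(Z^*/K)^\sim$ onto a non-quasi-compact connected Hausdorff space, but the constructions differ. The paper works on the dual side: it invokes a weight-space decomposition of $Z^*$ (citing \cite{BB19}) to find a $K$-invariant subspace $Z_0^*\subseteq Z^*$ of codimension $1$ or $2$, passes to the quotient $Z_1^*=Z^*/Z_0^*$, and then identifies $Z_1^*/\exp\lambda_0(\kg)$ with $[0,\infty)$ or $\RR$ according to whether $\lambda_0\not\equiv 0$ or $\lambda_0\equiv 0$; the required openness is read off a commutative diagram. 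You instead stay on the primal side: a single weight vector $z_0\in Z_\CC$ of $\de\alpha$ already gives the scalar function $h(\xi)=|\langle\xi,z_0\rangle|$, and the $K$-invariance of $h$ follows because $z_0$ is a common eigenvector of the whole group $\alpha(K)$ with unimodular eigenvalues. Your approach is more elementary---no weight-space decomposition, no external reference, and no case split on $\lambda\equiv 0$ versus $\lambda\not\equiv 0$, since the modulus handles both uniformly---while the paper's approach is more structural and makes the invariant complement explicit. The one place worth a remark in your write-up is the openness of the modulus on a real line $W_0\subseteq\CC$: the map $\RR\to[0,\infty)$, $t\mapsto|t|$, is indeed open for the subspace topology on $[0,\infty)$ (images of intervals are half-open or open intervals in $[0,\infty)$), so the composition is open in both the one- and two-dimensional cases, exactly as you assert.
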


\begin{proof}
With the notations in Remark~\ref{rem_abelian}, $Z$ is a vector space,  $\what{Z} \simeq Z^*$, and the hypothesis implies that 
the action $\de \alpha^*\colon \kg \to \End( Z^*)$ has at least a purely imaginary weight.

Let $\lambda_0\colon \kg \to\ie\RR$ be a purely imaginary weight of $\de \alpha^*$.  
By the weight space decomposition of $Z^*$ with respect to the Lie algebra representation $\de\alpha^*\colon \kg \to\End(Z^*)$, as discussed in \cite[\S 2.2]{BB19},  it follows that there exists a linear subspace $Z^*_0\subseteq Z^* $ satisfying the following conditions: 
\begin{itemize}
	\item  $\de \alpha^*(\kg) Z^* _0\subseteq Z^*_0$ and  $\dim_\RR Z^*/Z^*_0=2$ when $\lambda_0\not\equiv 0$, or
	$\dim_\RR Z^*/Z^*_0=1$ when $\lambda_0\equiv 0$.
	
	\item When  $\lambda_0\not\equiv 0$, the space $Z^*_1:= Z^*/Z^*_0$ has the structure of a $\CC$-vector space for which 
	\begin{equation}\label{eigenvalue}
	(\forall  y\in\kg )(\forall x\in Z^*) \quad \de\alpha^* (y)x\in\lambda_0(y)x+ Z_0^*.
	\end{equation}
\end{itemize}
Then there is  the commutative diagram 
$$\xymatrix{
	Z^*\ar[r]^{f} \ar[d]_q & Z^*_1 \ar[d]^{q_0} \\
	(Z^* /K)^\sim \ar[r]^{\widetilde{f}\ }& Z^*_1/\exp\lambda_0(\kg)
}$$
where
$$
\begin{gathered}
f\colon Z^*\to Z^*_1, \quad x\mapsto x+Z^*_0, \\
q\colon Z^* \to (Z^*/K)^\sim, \quad q(x):=\overline{K x}\\
q_0\colon Z^* _1\to Z^* _1/\exp\lambda_0(\kg ), \quad q_0(x+Z^*_0):=(\exp\lambda_0(\kg))x+ Z^*_0, \\
\end{gathered}
$$
and 
$$\widetilde{f}\colon (Z^*/ K)^\sim \to Z^*_1/\exp\lambda_0(\kg),\quad  
\widetilde{f}(\overline{K x}):=(\exp\lambda_0(\kg))x.$$ 
Here $\lambda_0(\kg)=\ie\RR$ when $\lambda_0 \not\equiv 0$, hence $\exp\lambda_0(\kg)=\TT$; otherwise $\lambda_0(\kg)=0$ and
$\exp\lambda_0(\kg)=\{1\}$. 
Then the mappings $q$ and $q_0$ are surjective, continuous, and open by \cite[Lemma 6.12]{Wi07}, while $f$ is clearly surjective, continuous, and open. 
It then  follows by the above commutative diagram that 
$\widetilde{f}$ is surjective, continuous, and open. 
Moreover, when $\lambda_0\not\equiv 0$,  $Z^*_1$ is a 1-dimensional complex vector space, and there is  a homeomorphism $\Psi_0\colon Z^*_1/\exp\lambda_0(\kg)\to[0,\infty)$; 
otherwise $\lambda_0(\kg)=0$ and we obtain a homeomorphism $\Psi_0\colon Z^*_1/\exp\lambda_0(\kg)\to \RR$. 
Neither $[0, \infty)$ nor $\RR$ have nonempty open quasi-compact subsets. 
Thus, we obtain a surjective, continuous and open map $\Psi_0\circ \tilde{f}$ from $(Z^*/K)^\sim$ onto 
a topological space that has no nonempty open quasi-compact subsets. 
Since the image of any open quasi-compact set through a continuous open mapping is again open and quasi-compact (\cite[Thm.~2, TG I 62]{Bo71}), there can be no nonempty open and quasi-compact subsets in 
$(Z^*/K)^\sim$. 
This concludes the proof.
\end{proof}

\section{Quasi-orbit spaces}\label{quasiorbits}

In the present section, unless otherwise mentioned, 
we keep the following notation and assumption. 

\begin{setting}\label{conds}
	Let $G$ be a fixed second countable locally compact group  and assume that the following conditions hold:
\begin{enumerate}[(1)]
	\item\label{cond1} There is a closed normal subgroup $L$ of $G$ such that $G/L$ is abelian.
		\item\label{minimal-i}  For every $\Pc \in \Prim(L) $ there is $T\in \what{G}$ such that
		$$
		\supp (T\vert_L)= \overline{G \cdot \Pc}.
		$$
\end{enumerate}
\end{setting}

\begin{remark}\label{cond2}\normalfont 
	Condition \eqref{minimal-i} in Setting~\ref{conds} is equivalent to the following condition. 
	\\
		(\ref{minimal-i}') For every $\pi \in \what{L} $ there is $T\in \what{G}$ such that
		$$
		\spec (T\vert_L)= \overline{G\cdot  \pi}.
		$$
	This is follows from Lemma~\ref{suppspec}.
\end{remark}

\begin{lemma}\label{minimal}
	Let $G$ be as above. 
	\begin{enumerate}[\rm (i)]
		\item\label{minimal-i1}  For every $T\in \what{G}$, there is $\pi \in \what{L}$ such that  
		$$ \supp (T\vert_L)=\overline{G\cdot  \Ker \pi}.$$
\item\label{ind} For $\pi_1, \pi_2\in \what{L}$,
	$\overline{G\cdot \pi_1}= \overline{G\cdot \pi_2}$ if and only if  $\overline{G\cdot\Ker \pi_1}= \overline{G\cdot \Ker\pi_2}$, and  if and only if 
the representations $\Ind_L^G(\pi_1)$ and $\Ind_L^G(\pi_2)$ are weakly equivalent.		
	\end{enumerate}
	\end{lemma}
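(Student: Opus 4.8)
The plan is to exploit the dictionary between spectra and supports established in Lemma~\ref{suppspec} together with the Mackey-type description of restrictions and inductions, in the setting where $A := G/L$ is abelian. For part~\eqref{minimal-i1}, I would start from an arbitrary $T \in \what G$ and analyze $T\vert_L$. Since $L$ is normal and $G/L$ is abelian, the restriction $T\vert_L$ is a (possibly continuous) direct integral/sum over a single $G$-orbit closure in $\what L$: concretely, by the standard results on restrictions of irreducible representations to normal subgroups (Fell, \cite{Fe62}; see also the use of \cite[Th.~4.5]{Fe62} already invoked in the proof of Lemma~\ref{res}), the closed set $\spec(T\vert_L) \subseteq \what L$ is $G$-invariant and is in fact the closure of a single $G$-orbit $\overline{G\cdot\pi}$ for some $\pi \in \what L$. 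Applying $\what\kappa$ and using that $\kappa$ is continuous, closed, open and surjective (so that $\what\kappa(\overline{G\cdot\pi}) = \overline{G\cdot\kappa(\pi)} = \overline{G\cdot\Ker\pi}$, as in the proof of Lemma~\ref{equiv}), we get $\supp(T\vert_L) = \what\kappa(\spec(T\vert_L)) = \overline{G\cdot\Ker\pi}$, which is the claim.

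For part~\eqref{ind}, the first equivalence — $\overline{G\cdot\pi_1} = \overline{G\cdot\pi_2}$ iff $\overline{G\cdot\Ker\pi_1} = \overline{G\cdot\Ker\pi_2}$ — is essentially Lemma~\ref{equiv} read through the identification $(\what L/G)^\approx \simeq (\what L/G)^\sim$ and $(\Prim(L)/G)^\approx \simeq (\Prim(L)/G)^\sim$ of Definition~\ref{2xtild3} and Lemma~\ref{quotient}\eqref{quotient_item_ii}: the homeomorphism $\kappa^\approx$ sends $\overline{G\cdot\pi}$ to $\overline{G\cdot\kappa(\pi)}$, and injectivity of $\kappa^\approx$ gives exactly this equivalence. (Alternatively, one argues directly: $\overline{A_1} = \overline{A_2}$ iff $\overline{\kappa(A_1)} = \overline{\kappa(A_2)}$ for $A_i \subseteq \what L$ because $\kappa$ is open, closed and surjective, applied to the $G$-orbits $A_i = G\cdot\pi_i$.) For the second equivalence, involving weak equivalence of $\Ind_L^G(\pi_1)$ and $\Ind_L^G(\pi_2)$, the idea is that weak containment of induced representations is governed by the orbit-closure ordering of the inducing representations: $\Ind_L^G(\pi_1) \preceq \Ind_L^G(\pi_2)$ should be equivalent to $\overline{G\cdot\pi_1} \subseteq \overline{G\cdot\pi_2}$. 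This rests on two facts: continuity of induction with respect to weak containment (\cite{Fe62}), which gives one direction, and for the converse, that $(\Ind_L^G(\pi))\vert_L \approx \overline{G\cdot\pi}$ (again Fell's description of the restriction of an induced representation to a normal subgroup, valid here since $G/L$ is abelian hence amenable, so no measure-theoretic pathology intervenes), so that $\Ind_L^G(\pi_1) \preceq \Ind_L^G(\pi_2)$ forces $\pi_1 \preceq (\Ind_L^G(\pi_2))\vert_L \approx \overline{G\cdot\pi_2}$, i.e. $\pi_1 \in \overline{G\cdot\pi_2}$, whence $\overline{G\cdot\pi_1} \subseteq \overline{G\cdot\pi_2}$. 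Running this in both directions and combining with antisymmetry of inclusion yields the stated equivalence.

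The main obstacle I anticipate is pinning down precisely the statement ``$\spec(T\vert_L) = \overline{G\cdot\pi}$ for a single $\pi$'' and the companion statement ``$\spec(\Ind_L^G(\pi)\vert_L) = \overline{G\cdot\pi}$'' with the correct hypotheses. These are forms of the Mackey–Fell subgroup theorems, and while they hold cleanly for normal subgroups with amenable (here abelian) quotient in the second-countable setting, one must be careful that the relevant quasi-orbit is genuinely a single orbit closure rather than a larger invariant set — this is where amenability of $A = G/L$ and \cite[Th.~3.3]{Go76} (already used in Lemma~\ref{res}) or \cite[Th.~4.5]{Fe62} do the work. Once that input is in hand, everything else is a formal manipulation of the continuity, openness, closedness and surjectivity of $\kappa$, exactly as packaged in Lemmas~\ref{suppspec}, \ref{equiv}, and \ref{quotient}. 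I would also remark that part~\eqref{minimal-i1} is the ``surjectivity'' counterpart to Condition~\eqref{minimal-i} of Setting~\ref{conds} (which supplies the ``other direction''), so together they will identify quasi-orbits in $\what L$ under $G$ with the sets arising as $\spec(T\vert_L)$, $T \in \what G$ — the fact that drives the homeomorphism theorem of the next section.
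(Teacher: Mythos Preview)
Your plan is correct and follows essentially the same route as the paper: part~\eqref{minimal-i1} is obtained from the fact that the restriction of an irreducible representation to a normal subgroup has spectrum equal to a single $G$-orbit closure (the paper cites \cite[Thm.~2.1]{Go76} for exactly this, which is the precise reference you are searching for in your ``main obstacle'' paragraph, rather than \cite[Th.~3.3]{Go76} or \cite[Th.~4.5]{Fe62}), and part~\eqref{ind} is handled via Lemma~\ref{equiv} for the first equivalence and via continuity of induction \cite[Thm.~4.2]{Fe62} together with \cite[Thm.~4.5]{Fe62} for the second, just as you outline.
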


\begin{proof}
The assertion \eqref{minimal-i1} is immediate from \cite[Thm.~2.1]{Go76}.
	
	\eqref{ind} By Lemma~\ref{equiv}, $\overline{G\cdot \pi_1}= \overline{G\cdot \pi_2}$ if and only if 
	$\overline{G\cdot \Ker \pi_1}= \overline{G\cdot \Ker \pi_2}$. 
	
	It it thus enough to show that
	$\overline{G\cdot \pi_1}= \overline{G\cdot \pi_2}$ if and only if 
	the representations $\Ind_L^G(\pi_1)$ and $\Ind_L^G(\pi_2)$ are weakly equivalent.	
	If $\pi_1\in \overline{G\cdot \pi_2}$, then $\pi_1 \preccurlyeq G\cdot \pi_2$, by \cite[Thm.~3.4.10]{Dix77}.
	Thus,  $\Ind_L^G(\pi_1) \preccurlyeq \Ind_L^G (\pi_2)$  by \cite[Thm.~4.2]{Fe62}, since 
	$\Ind_L^G (g\cdot \pi_2) =\Ind_L^G ( \pi_2)$ for every $g\in G$. (See \cite[Lemma~2.1.3]{CG90}.)
	The converse implication follows directly from  \cite[Thm.~4.5]{Fe62}.
	\end{proof}

For the next results, we use the notation and definitions in Subsection~\ref{topologies}.

\begin{lemma}\label{support}
	The map
	$$S_0\colon (\Prim(L)/G)^\approx \to \Cl(\Prim (G)),\;\;  \overline{G\cdot \Ker \pi } \mapsto \supp(\Ind_L^G(\pi))$$ is continuous.
\end{lemma}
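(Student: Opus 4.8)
The plan is to exhibit $S_0$ as a composition of maps already known to be continuous, together with the homeomorphism $\kappa^\approx$ from Lemma~\ref{equiv}. The target value $\supp(\Ind_L^G(\pi))$ suggests factoring through the assignment $\pi\mapsto[\Ind_L^G(\pi)]\in\Tc(G)$ followed by $\supp\colon\Tc(G)\to\Cl(\Prim(G))$, which is continuous by Remark~\ref{supp_cont}. So the real content is the continuity of the map
$$\Ind^\approx\colon (\what L/G)^\approx\to\Tc(G),\qquad \overline{G\cdot\pi}\mapsto[\Ind_L^G(\pi)],$$
after which $S_0$ is obtained by pre-composing with the homeomorphism $(\kappa^\approx)^{-1}\colon (\Prim(L)/G)^\approx\to(\what L/G)^\approx$ and post-composing with $\supp$. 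Note $\Ind^\approx$ is well defined: by Lemma~\ref{minimal}\eqref{ind}, $\overline{G\cdot\pi_1}=\overline{G\cdot\pi_2}$ forces $\Ind_L^G(\pi_1)$ and $\Ind_L^G(\pi_2)$ to be weakly equivalent, hence to have the same image in $\Tc(G)$ up to $\approx$; and for the composition with $\supp$ this is all that is needed, since $\supp$ depends only on the weak-equivalence class (it is defined via $\spec$).

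First I would recall that, by Remark~\ref{upper-tops}, the inner hull-kernel topology on $\Tc(G)$ is the coarsest one making $\spec\colon\Tc(G)\to\Cl(\what G)$ continuous; equivalently, by Remark~\ref{supp_cont} and Lemma~\ref{kappahat}, it suffices to check that the composite $\overline{G\cdot\pi}\mapsto\spec(\Ind_L^G(\pi))\in\Cl(\what G)$ is continuous, and for $S_0$ itself it even suffices to check continuity of $\overline{G\cdot\Ker\pi}\mapsto\supp(\Ind_L^G(\pi))$ directly. Concretely, using the sub-basic closed sets $\downarrow\!\Sc$ (for $\Sc\in\Cl(\Prim(G))$) that define the upper topology on $\Cl(\Prim(G))$, I would show $S_0^{-1}(\downarrow\!\Sc)$ is closed in $(\Prim(L)/G)^\approx$. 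By definition of the support, $\supp(\Ind_L^G(\pi))\subseteq\Sc$ means $\spec(\Ind_L^G(\pi))\subseteq\kappa^{-1}(\Sc)=:\Sc'$, a closed $G$-invariant subset of $\what L$-pulled-back-to-$\what G$... more precisely a closed subset of $\what G$. By the characterization $\spec(T)=\{[\tau]:\tau\preceq T\}$ and Lemma~\ref{minimal}, $\spec(\Ind_L^G(\pi))$ is governed by the quasi-orbit closure $\overline{G\cdot\pi}$; so the condition $\supp(\Ind_L^G(\pi))\subseteq\Sc$ should translate into a closed condition on $\overline{G\cdot\Ker\pi}$ inside $\Cl(\Prim(L))$, namely membership in a $\downarrow\!$-type set, which is exactly the statement that $S_0^{-1}(\downarrow\!\Sc)$ is closed for the induced (upper) topology on $(\Prim(L)/G)^\approx\subseteq\Cl(\Prim(L))$.

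The cleanest route, and the one I would actually write, avoids re-deriving this by hand: build the commuting triangle
$$\xymatrix{
(\what L/G)^\approx \ar[r] \ar[dr] & \Tc(G) \ar[d]^{\supp} \\
 & \Cl(\Prim(G))
}$$
where the horizontal map is $\Ind^\approx$ and the diagonal is $S_0\circ\kappa^\approx$, and prove continuity of $\Ind^\approx$ by showing $\spec\circ\,\Ind^\approx$ is continuous. For that I would use that $\spec(\Ind_L^G(\pi))$ is the closure in $\what G$ of $\{[\tau]\in\what G : \tau\vert_L \text{ is supported on } \overline{G\cdot\pi}\}$-type description coming from \cite[Th.~4.5]{Fe62}/Lemma~\ref{minimal}, so that $\spec\circ\,\Ind^\approx$ factors as $\overline{G\cdot\pi}\mapsto\overline{G\cdot\pi}\mapsto(\text{its }\spec)$ through the inclusion $\iota\colon(\what L/G)^\approx\hookrightarrow\Cl(\what L)$ of Lemma~\ref{quotient}\eqref{quotient_item_ii} composed with a continuous map $\Cl(\what L)\to\Cl(\what G)$ of the $\what\kappa$-type. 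The main obstacle I anticipate is precisely pinning down that last continuous map $\Cl(\what L)\to\Cl(\what G)$ (or rather $\to\Cl(\Prim(G))$) sending a closed $G$-quasi-orbit-closure $A$ to $\supp(\Ind_L^G(\pi_A))$ in a way that is manifestly continuous for the upper topologies — i.e.\ verifying that the preimage of each $\downarrow\!\Sc$ is of the form $\downarrow\!(\text{something closed})$, analogously to the computation $\what\kappa^{-1}(\downarrow\!\Sc)=\downarrow\!\kappa^{-1}(\Sc)$ in Lemma~\ref{kappahat}. Once that identity is in hand, continuity of $S_0$ follows formally from continuity of $\kappa^\approx$ (Lemma~\ref{equiv}), and the proof is short.
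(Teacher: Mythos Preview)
Your proposal is headed in a workable direction but leaves the central step unfinished (you say so yourself: ``the main obstacle I anticipate is precisely pinning down that last continuous map\ldots''). There is also a minor wrinkle: your $\Ind^\approx$ is not literally well defined as a map into $\Tc(G)$, since weakly equivalent induced representations need not be unitarily equivalent; you patch this by noting $\supp$ depends only on the weak-equivalence class, which is fine, but it means you should really be analyzing $\supp\circ\Ind^\approx$ rather than $\Ind^\approx$ itself.

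The paper bypasses all of this with a one-line quotient argument you have overlooked. Instead of trying to define and prove continuity of a map \emph{on} the quasi-orbit space, work upstairs on $\widehat L$. Let $Q_0\colon\widehat L\to(\Prim(L)/G)^\approx$, $\pi\mapsto\overline{G\cdot\Ker\pi}$, be the quasi-orbit map; by Lemma~\ref{quotient}\eqref{quotient_item_i},\eqref{quotient_item_ii} and Lemma~\ref{equiv} it is continuous, surjective, and \emph{open}. The diagram
\[
\xymatrix{
\widehat L \ar[r]^{\Ind_L^G} \ar[d]_{Q_0} & \Tc(G) \ar[d]^{\supp} \\
(\Prim(L)/G)^\approx \ar[r]^{\ \ S_0} & \Cl(\Prim(G))
}
\]
commutes, $\Ind_L^G$ is continuous in the inner hull-kernel topology by Fell \cite[Thm.~4.1]{Fe62-1}, and $\supp$ is continuous by Remark~\ref{supp_cont}. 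Hence $S_0\circ Q_0$ is continuous. Since $Q_0$ is open and surjective, for any open $D\subseteq\Cl(\Prim(G))$ one has $S_0^{-1}(D)=Q_0\bigl((S_0\circ Q_0)^{-1}(D)\bigr)$, which is open. That is the whole proof.

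The moral: rather than trying to construct a continuous map $\Cl(\widehat L)\to\Cl(\widehat G)$ extending induction (which is not readily available), use that $(\Prim(L)/G)^\approx$ carries the quotient topology from $\widehat L$ via an open map, so continuity of $S_0$ reduces to continuity of the already-known composite $\supp\circ\Ind_L^G$ on $\widehat L$.
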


\begin{proof}
	We first remark that $S_0$ is well-defined by Lemma~\ref{minimal}\eqref{ind}.
	Denote by $Q_0$ the map $\what L \to (\Prim(L)/G)^\approx $, $Q_0(\pi) = \overline{G \cdot \Ker \pi}$, that is continuous, surjective and open by  Lemma~\ref{quotient}\eqref{quotient_item_i}, \eqref{quotient_item_ii} and Lemma~\ref{equiv}.
	We have the commutative diagram
	$$\xymatrix{
		\what L  \ar[d]_{Q_0} \ar[r]^{\Ind_L^G}  & \Tc(G) \ar[d]^{\supp} \ar[r]^{\spec} & \Cl(\what G) \ar[dl]^{\what{\kappa}}
		\\
		(\Prim(L)/G)^\approx  \ar[r]^{ S_0} & \Cl(\Prim( G))
	},
	$$
	where the mapping $\supp$ is continuous by Remark~\ref{supp_cont}, while $\Ind_L^G$ is continuous by 
	\cite[Thm.~4.1]{Fe62-1}.
	
	If $D$  is an open subset in  $\Cl(\Prim (G))$, 
	then $(S_0\circ Q_0)^{-1}(D)$ is open in $\what L$, hence $Q_0((S_0\circ Q_0)^{-1}(D))$ is open in $(\Prim(L)/G)^\approx$.
	On the other hand, since $Q_0$ is surjective we have that $Q_0((S_0\circ Q_0)^{-1}(D))= S_0^{-1} (D)$.
	We have thus obtained that $S_0^{-1}(D)$ is open in  $(\Prim(L)/G)^\approx $ whenever $D$ is open 
	in $\Cl(\Prim (G))$, hence $S_0$ is continuous.
\end{proof}

\begin{lemma}\label{restriction}
	The map
	$$R\colon \Prim(G) \to (\Prim(L)/G)^\approx, \quad \Ker T \mapsto \supp(T\vert _L)$$
	is well defined,  continuous and surjective. 
\end{lemma}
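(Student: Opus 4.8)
The plan is to show the three required properties separately, reducing each to facts already established. First, for \emph{well-definedness}, I must check that $\supp(T\vert_L)$ depends only on $\Ker T$ (as an element of $\Prim(G)$) and that it lies in $(\Prim(L)/G)^\approx$ rather than merely in $\Cl(\Prim(L))$. For the latter point, I would invoke the fact that $\supp(T\vert_L)$ is a $G$-invariant closed subset of $\Prim(L)$ (the restriction to $L$ of a representation of $G$ has $G$-invariant support, since $T(g)$ intertwines $T\vert_L$ with its conjugate $g\cdot(T\vert_L)$), and then combine Lemma~\ref{minimal}\eqref{minimal-i1} — which says $\supp(T\vert_L)=\overline{G\cdot\Ker\pi}$ for some $\pi\in\what L$ — with Definition~\ref{2xtild3} to conclude that this set actually belongs to $(\Prim(L)/G)^\approx\subseteq\Cl(\Prim(L))$. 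For dependence only on $\Ker T$: if $\Ker T_1=\Ker T_2$ in $C^*(G)$, then $T_1\approx T_2$ as representations of $G$, hence $T_1\vert_L\approx T_2\vert_L$ as representations of $L$ (weak containment is preserved under restriction to a closed subgroup), and weakly equivalent representations have the same support, so $\supp(T_1\vert_L)=\supp(T_2\vert_L)$.

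Next, for \emph{surjectivity}: given an arbitrary point of $(\Prim(L)/G)^\approx$, it has the form $\overline{G\cdot\Ker\pi}$ for some $\pi\in\what L$ by Lemma~\ref{quotient}\eqref{quotient_item_ii} and Lemma~\ref{equiv}. By Condition~\eqref{minimal-i} of Setting~\ref{conds} (equivalently its reformulation in Remark~\ref{cond2}), there exists $T\in\what G$ with $\supp(T\vert_L)=\overline{G\cdot\Ker\pi}$, and then $R(\Ker T)$ equals the prescribed point. Hence $R$ is onto.

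Finally, for \emph{continuity}, the idea is to factor $R$ through the map $S_0$ of Lemma~\ref{support}. Consider the composite $\what G\xrightarrow{\;T\mapsto\supp(T\vert_L)\;}(\Prim(L)/G)^\approx$; by Lemma~\ref{minimal}\eqref{minimal-i1} and Lemma~\ref{minimal}\eqref{ind}, for $T\in\what G$ with $\supp(T\vert_L)=\overline{G\cdot\Ker\pi}$ we have $\Ind_L^G(\pi)\approx\Ind_L^G(\pi')$ whenever the corresponding supports coincide, and moreover $T\preccurlyeq\Ind_L^G(T\vert_L)$ together with the subgroup theorem identifies $\supp(T\vert_L)$ with $S_0(\overline{G\cdot\Ker\pi})$ — more precisely, I would argue that the map $\what G\to(\Prim(L)/G)^\approx$ sending $[T]$ to $\supp(T\vert_L)$ equals the composition $Q_0\circ(T\mapsto\pi)$ appropriately interpreted, but cleaner is to observe directly that this map on $\what G$ is continuous because $\supp$ is continuous (Remark~\ref{supp_cont}) and restriction $\what G\to\Tc(L)$, $[T]\mapsto[T\vert_L]$ is continuous for the inner hull-kernel topologies (this is the restriction analogue of Fell's continuity results). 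Since $\kappa\colon\what G\to\Prim(G)$ is a continuous open surjection and $R\circ\kappa$ equals this continuous map on $\what G$, the universal property of the quotient topology on $\Prim(G)$ yields that $R$ itself is continuous.

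The main obstacle I anticipate is the continuity argument: one must be careful that $[T]\mapsto[T\vert_L]$ is continuous from $\what G$ into $\Tc(L)$ with the inner hull-kernel topology, and that composing with $\supp$ lands in the subspace $(\Prim(L)/G)^\approx$ with its induced topology in a way compatible with the quotient structure. It may be technically smoother to realize $R$ as the descent of the already-proven continuous map $S_0$ along $\Res$-type identifications, i.e.\ to exhibit a continuous surjection $\what G\to(\Prim(L)/G)^\approx$ that factors through $\kappa$, so that the quotient property of $\Prim(G)$ does the rest; verifying the factorization uses precisely Lemma~\ref{minimal}.
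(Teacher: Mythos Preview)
Your proposal is correct and follows essentially the same route as the paper: define the auxiliary map $R_0\colon\what G\to(\Prim(L)/G)^\approx$, $[T]\mapsto\supp(T\vert_L)$, check it is well-defined via Lemma~\ref{minimal}\eqref{minimal-i1} and continuous as the composite of restriction $\what G\to\Tc(L)$ with $\supp$, then descend through the continuous open surjection $\kappa\colon\what G\to\Prim(G)$, with surjectivity coming directly from Setting~\ref{conds}\eqref{minimal-i}. Your detour through $S_0$ is unnecessary (as you yourself note), and the explicit $G$-invariance remark is subsumed by Lemma~\ref{minimal}\eqref{minimal-i1}, but the core argument matches the paper exactly.
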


\begin{proof} 
	From 
	Lemma~\ref{minimal}\eqref{minimal-i1} it follows that for every $T\in \what{G}$, there is $\pi \in \what{L}$ such that
	$\supp(T\vert _L)=\overline{G\cdot \Ker \pi}$. 
	Hence the map $R_0\colon \what{G} \to (\Prim(L)/G)^\approx$, $T \mapsto \supp(T\vert _L)$ is well-defined. 
	It is also continuous, since the restriction map $\what{G} \to \Tc(L)$, $\what{G}\to T\vert_L$ is continuous, 
	and $\supp\colon \Tc(L) \to \Cl(\Prim(L))$ is continuous.
	On the other hand, if $T_1$, $T_2\in \what{G}$ are such that $\Ker T_1= \Ker T_2$, then $R_0(T_1)= R_0(T_2)$, therefore 
	the map $R$ is well-defined and we have the commutative diagram 
	$$\xymatrix{
		\what{G} \ar[d]_{R_0} \ar[r]^{\kappa} &  \Prim(G)  \ar[dl]_{R}
		\\
		(\Prim(L)/G)^\approx
	}. 
	$$
	Since the canonical map $\kappa \colon\what{G} \to \Prim{(G)}$, $\kappa([T])=\Ker T$,  is continuous and open, it follows that $R$ is continuous as well.

	Surjectivity follows by the assumption \eqref{minimal-i} in Setting~\ref{conds}.
	 \end{proof}

We now introduce some notation needed in Theorem~\ref{quasi-orbits} below. 
Set $A:=G/L$ and let  $p\colon G\to A$ be the canonical quotient homomorphism. 
The character group $\widehat{A}$ has a continuous action by automorphisms of $C^*(G)$, 
$$\widehat{A}\times C^*(G)\to C^*(G),\quad (\chi,\varphi)\mapsto \chi\cdot \varphi$$
where $(\chi\cdot\varphi)(g):=\chi(p(g))\varphi(g)$ if $\chi\in\widehat{A}$, $\varphi\in L^1(G)$, and $g\in G$. 
This gives rise to continuous actions of $\widehat{A}$ on  $\Prim(G)$ 
and on $\widehat{C^*(G)}$, respectively.  
Taking into account the canonical identification $\widehat{C^*(G)}\simeq\widehat{G}$, 
the corresponding action of $\widehat{A}$ on classes of  irreducible representation of $G$  is given by
$$\widehat{A}\times\widehat{G}\to\widehat{G},\quad (\chi,T)\mapsto \chi\cdot T
=(\chi\circ p)\otimes T$$
where, for any unitary irreducible representation $T\colon G\to\Bc(\Hc)$ and any $\chi\in\widehat{A}$ one defines $\chi\cdot T\colon G\to\Bc(\Hc)$, $(\chi\cdot T)(g):=\chi(p(g))T(g)$. 
As usual, we denote by $(\Prim(G)/\widehat{A})^\sim$ and $(\widehat{G}/\widehat{A})^\sim$ the quasi-orbit spaces corresponding to the above actions of $\widehat{A}$ on $\Prim(G)$ and on $\widehat{G}$, respectively, and by $(\Prim(G)/\widehat{A})^\approx$ and $(\widehat{G}/\widehat{A})^\approx$ their homeomorphic copies in $\Cl(\Prim(G))$ and $\Cl(\what{G})$, respectively. 

\begin{theorem}\label{quasi-orbits}
	Let $G$ be second countable locally compact group.
	Assume that there exists a closed normal subgroup $L$ of $G$ such that $G/L$ is abelian, 
	and for every $\Pc \in \Prim(L) $ there is $T\in \what{G}$ such that
	$$
	\supp (T\vert_L)= \overline{G\cdot \Pc}.$$
	Then there is a homeomorphism of quasi-orbit spaces 
	\begin{equation}\label{quasi-orbits_eq1}
	 (\Prim(L)/G)^\sim\to (\Prim(G)/\widehat{A})^\sim.
	 \end{equation}
\end{theorem}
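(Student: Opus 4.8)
The plan is to construct the homeomorphism \eqref{quasi-orbits_eq1} by exhibiting a commutative square of maps between quasi-orbit spaces that we already have under control, and then reading off the desired map as the unique fill-in. The pivotal observation is that the map $R\colon\Prim(G)\to(\Prim(L)/G)^\approx$ of Lemma~\ref{restriction}, $\Ker T\mapsto\supp(T|_L)$, is constant on $\widehat{A}$-orbits: indeed $(\chi\cdot T)|_L=T|_L$ for every $\chi\in\widehat{A}$ since $\chi$ is trivial on $L$, so $R$ factors through the orbit map $\Prim(G)\to\Prim(G)/\widehat{A}$ and, because $(\Prim(L)/G)^\approx$ is $T_0$ (it sits inside $\Cl(\Prim(G))$), it factors further through the $T_0$-ization, yielding a continuous surjection
$$\overline{R}\colon(\Prim(G)/\widehat{A})^\sim\to(\Prim(L)/G)^\approx\simeq(\Prim(L)/G)^\sim.$$
Surjectivity is inherited from that of $R$ (Lemma~\ref{restriction}). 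So the real content is to produce a continuous inverse. For that I would go through $\widehat{G}$: the composite $\what L\xrightarrow{\Ind_L^G}\Tc(G)$ followed by $\spec$ lands in $\Cl(\what G)$, and Lemma~\ref{minimal}\eqref{ind} tells us precisely that $\spec(\Ind_L^G\pi)$ depends only on $\overline{G\cdot\pi}$, i.e. only on the class $Q_0(\pi)\in(\Prim(L)/G)^\approx$. Moreover $\spec(\Ind_L^G\pi)$ is, by the standard Mackey/Fell description (\cite[Thm.~4.1]{Fe62-1} and the $\widehat{A}$-action above), a single $\widehat{A}$-orbit closure in $\what G$ — this is where one must check that inducing from $L$ produces representations that are ``$\widehat{A}$-homogeneous'' in the appropriate sense, using that $\widehat{A}=\what{G/L}$ and $\Ind_L^G(\chi\cdot\pi)$-type identities. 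Granting that, one gets a well-defined continuous map $(\Prim(L)/G)^\approx\to(\what G/\widehat{A})^\approx\simeq(\what G/\widehat{A})^\sim$, and composing with the homeomorphism $(\what G/\widehat{A})^\sim\simeq(\Prim(G)/\widehat{A})^\sim$ induced by $\kappa$ (which is open and closed, so descends to quasi-orbit spaces exactly as in Lemma~\ref{equiv}) gives the candidate inverse $S$.

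The remaining work is to verify $\overline{R}\circ S=\id$ and $S\circ\overline{R}=\id$. The first composite sends a class $\overline{G\cdot\Ker\pi}$ to $\supp\bigl((\Ind_L^G\pi)|_L\bigr)$, and by Lemma~\ref{minimal}\eqref{minimal-i1} together with the weak-containment computation in the proof of Lemma~\ref{restriction} this equals $\overline{G\cdot\Ker\pi}$ again — essentially the statement that restriction followed by inducing recovers the $G$-orbit closure, which is Fell's $\Ind$–$\Res$ reciprocity (\cite[Thm.~4.5]{Fe62}) combined with Setting~\ref{conds}\eqref{minimal-i}. The second composite sends $\overline{\widehat{A}\cdot\Ker T}$ first to the $G$-orbit-closure class of $\Ker\pi$ where $\supp(T|_L)=\overline{G\cdot\Ker\pi}$ (Lemma~\ref{minimal}\eqref{minimal-i1}), and then back to the $\widehat{A}$-orbit closure of $\kappa(\spec(\Ind_L^G\pi))$; one must check this returns the original $\widehat{A}\cdot\Ker T$, which again reduces to Setting~\ref{conds}\eqref{minimal-i} (the hypothesis guarantees such a $T$ is, up to the $\widehat{A}$-action, the one we started with, because $\spec(\Ind_L^G\pi)$ is a full $\widehat{A}$-orbit containing $[T]$). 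Since both $\overline{R}$ and $S$ are continuous, open (the orbit/$T_0$-ization maps are open by Lemma~\ref{quotient}\eqref{quotient_item_i}, and $\kappa$ is open) and mutually inverse, \eqref{quasi-orbits_eq1} is a homeomorphism.

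The step I expect to be the main obstacle is establishing that $\spec(\Ind_L^G\pi)$ is exactly one $\widehat{A}$-orbit closure in $\what G$ — i.e. the ``transitivity up to $\widehat{A}$'' of the irreducible constituents of $\Ind_L^G\pi$ — and that $[T]$ itself lies in that orbit closure when $\supp(T|_L)=\overline{G\cdot\Ker\pi}$. This is the heart of Mackey theory for the extension $L\trianglelefteq G$ with abelian quotient, and it is what forces hypothesis \eqref{minimal-i} of Setting~\ref{conds}: without it the induced representation need not see the whole $G$-orbit closure, and the correspondence breaks. I would isolate this as a preliminary claim, proving it from \cite[Thm.~4.1, Thm.~4.5]{Fe62} and the explicit $\widehat{A}$-action $(\chi,T)\mapsto(\chi\circ p)\otimes T$ (note $(\chi\circ p)\otimes\Ind_L^G\pi\simeq\Ind_L^G\pi$ since $\chi\circ p$ is trivial on $L$, which is precisely what makes $\spec(\Ind_L^G\pi)$ $\widehat{A}$-invariant), and then the bijectivity bookkeeping above becomes a routine, if slightly tedious, diagram chase.
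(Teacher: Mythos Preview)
Your proposal is correct and follows essentially the same route as the paper: construct $S$ via $\overline{G\cdot\Ker\pi}\mapsto\supp(\Ind_L^G\pi)$, construct its inverse by factoring the restriction map $R$ of Lemma~\ref{restriction} through the $\widehat{A}$-quasi-orbit space, and verify the two composites are identities using Fell's induction/restriction results. The one place where the paper is more explicit than your sketch is the ``main obstacle'' you flagged --- that $\spec(\Ind_L^G\pi)=\overline{\widehat{A}\cdot T}$ --- which the paper proves not via $(\chi\circ p)\otimes\Ind_L^G\pi\simeq\Ind_L^G\pi$ but via the dual identity $\int_{\widehat{A}}(\chi\circ p)\otimes T\,\de\chi\simeq(\lambda_A\circ p)\otimes T=\Ind_L^G(\tau)\otimes T\simeq\Ind_L^G(T|_L)\approx\Ind_L^G(\pi)$, combining Fell's tensor formula \cite[Lemma~4.2]{Fe62} with the direct-integral decomposition of the regular representation of $A$.
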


\begin{proof}
	We prove in fact that the map 
	\begin{equation}\label{quasi-orbits_eq1_extra}
	S\colon (\Prim(L)/G)^\approx\to (\Prim(G)/\widehat{A})^\approx,\quad 
	\overline{G\cdot \Ker \pi}\mapsto \supp(\Ind_L^G(\pi)).
	\end{equation}
	is a well-defined homeomorphism.

	We first show  that the image of $S$ is contained in $(\Prim(G)/\widehat{A})^\approx$. 
	To this end, using Remark~\ref{cond2}  and Lemma~\ref{suppspec},  it suffices to prove the following:
	\begin{equation}\label{quasi-orbits_proof_eq1}
	\pi\in\widehat{L},\  T\in\widehat{G},\ \spec(T\vert_L) =\overline{G\cdot \pi} \implies 
	\spec(\Ind_L^G(\pi))=\overline{\widehat{A}\cdot T}\in (\widehat{G}/\widehat{A})^\approx.
	\end{equation} 
	To prove \eqref{quasi-orbits_proof_eq1}, let $\lambda\colon A\to \Bc(L^2(A))$ be the regular representation of $A$. 
	Since $A$ is a locally compact abelian group, the Fourier transform $L^2(A)\to L^2(\widehat{A})$ is a unitary equivalence between $\lambda$ and the unitary representation by multiplication operators,
	$$\widehat{\lambda}\colon A\to \Bc(L^2(\widehat{A})), \quad 
	(\widehat{\lambda}(a)\psi)(\chi):=\chi(a)\psi(\chi).$$
	Using the direct integral decomposition 
	$\widehat{\lambda}=\int_{\widehat{A}}\chi\de\chi$
	with respect to a Haar measure on~$\widehat{A}$, we obtain a unitary equivalence 
	$$\int\limits_{\widehat{A}}\chi\cdot T\de\chi
	=\int\limits_{\widehat{A}}(\chi\circ p)\otimes T\de\chi
	\simeq \int\limits _{\widehat{A}}(\chi\circ p)\de\chi \otimes T
	=(\widehat{\lambda}\circ p)\otimes T.$$
	Denoting by $\tau\colon L\to\TT$ the trivial representation of $L$, 
	we have  $\lambda\circ p=\Ind_L^G(\tau)$, 
	hence there is a unitary equivalence $\widehat{\lambda}\circ p\simeq \Ind_L^G(\tau)$. 
	
	On the other hand, by \cite[Lemma 4.2]{Fe62}, there is a unitary equivalence 
	$$\Ind_L^G(\tau)\otimes T\simeq \Ind_L^G(\tau\otimes T\vert_L)=\Ind_L^G(T\vert_L).$$
	Since the support of the Haar measure of $\widehat{A}$ is equal to $\widehat{A}$, it then follows by \cite[Th. 3.1]{Fe62} that 
	the set $\widehat{A}\cdot T=\{\chi\cdot T\mid \chi\in\widehat{A}\}\subseteq\widehat{G}$ is weakly equivalent to the representation
	$\Ind_L^G(T\vert_L)$.
	Since $T\vert_L$ is weakly equivalent to $G\cdot \pi$, 
	we obtain~\eqref{quasi-orbits_proof_eq1}, by using \cite[Thm.~4.2]{Fe62} and the same argument as in Lemma~\ref{minimal}\eqref{ind} above.

	We now construct a continuous inverse of the mapping~\eqref{quasi-orbits_eq1}. 
	To this end we note that for arbitrary $T\in \widehat{G}$ and 
	$\chi\in\widehat{A}$, we have $(\chi\cdot T)\vert L=T\vert_L$, hence $R(\Ker(\chi\cdot T))=R(\Ker T)$. 
	Since the mapping $R\colon\Prim(G)\to(\Prim(L)/G)^\approx$ is continuous and surjective by Lemma~\ref{restriction}, while the topological space $(\Prim(L)/G)^\approx$ is $T_0$, it follows by \cite[Lemma 6.10]{Wi07} and Lemma~\ref{quotient}\eqref{quotient_item_ii} that there exists a continuous surjective mapping $R'\colon (\Prim(G)/\widehat{A})^\approx\to (\widehat{L}/G)^\approx$ for which the diagram 
	\begin{equation}\label{quasi-orbits_proof_eq2}
	\xymatrix{\Prim(G) \ar[d]_{\iota\circ Q} \ar[dr]^{R} & \\
		(\Prim(G)/\widehat{A})^\approx \ar[r]^{R'} & (\Prim(L)/G)^\approx}
	\end{equation}
	is commutative, where $Q$, $\iota$  are maps given by Lemma~\ref{quotient} for $X$ replaced by $\Prim(G)$ and $G$ replaced by $\widehat{A}$.
	 
	To see that the continuous mappings $R'$ and \eqref{quasi-orbits_eq1} are inverse to each other, 
	we recall (Lemma~\ref{minimal}\eqref{minimal-i1}) that 
	for every $T\in\widehat{G}$ there is $\pi\in\widehat{L}$ such that 
$
\supp (T\vert_L)=\overline{G\cdot \Ker\pi}.
$	
Then, by \eqref{quasi-orbits_proof_eq2}, we have
	\begin{align*}
	R'(\overline{\widehat{A}\cdot \Ker T})
	& =R'((\iota\circ Q)(\Ker T))  
	=
	R(\Ker T)\\
	& =\supp(T\vert_L) =\overline{G\cdot \Ker \pi}. 
	\end{align*}
	Hence 
	\begin{align*}
	(S\circ R')(\overline{\widehat{A}\cdot \Ker T})) 
	& =
	S(\overline{G\cdot\Ker \pi}) =\supp(\Ind_L^G(\pi))\\
	& \mathop{=}\limits^{\eqref{quasi-orbits_proof_eq1}}
	\kappa_G(\overline{\widehat{A}\cdot T})  
	=\overline{\widehat{A}\cdot \Ker T},
	\end{align*}
	where 
	the last equality holds true since the kernel mapping $\kappa_G\colon \widehat{G}\to\Prim(G)$ is continuous, closed, and equivariant with respect to the action of the automorphism group of~$C^*(G)$. 
	(See also Definition~\ref{supp}.)
	On the other hand, for $\pi \in \what{L}$ there is $T\in \what{G}$ such that $\supp( T\vert_L) =\overline{G\cdot \Ker \pi}$
	(Setting~\ref{conds}\eqref{minimal-i}). 
	Then by \eqref{quasi-orbits_proof_eq1} and the properties of $\kappa_G$, as above, we have
	\begin{align*}
	(R'\circ S)(\overline{G\cdot \Ker \pi}) 
	& = R'(\overline{\what{A}\cdot  \Ker T})\\
	& = R'((\iota\circ Q)(\Ker T))= R(\Ker T)=\supp(T\vert_L)= \overline{G\cdot \Ker \pi}.
	\end{align*}
	Thus $S\circ R'=\id$ and $ R'\circ S=\id$, hence 
$S^{-1}=R'$, which is a continuous mapping. 
This completes the proof of the fact that $S$ in \eqref{quasi-orbits} is a homeomorphism. 
\end{proof}


\section{AF-embeddable solvable Lie groups}\label{proofs}

In the present section we assume that $G$ is a connected simply connected solvable Lie group with Lie algebra $\gg$. 
Then the commutator subgroup  $L:=[G, G]$ is nilpotent, and a connected, simply connected, closed normal subgroup of $G$. 
We denote by $Z$ the center of $L$. 
The Lie algebra of $L$  is $[\gg, \gg]$, and we denote by $\zg$ the Lie algebra of $Z$.

The main result of this section is the following theorem. 
\begin{theorem}\label{solvable-extra}
	Let $G$ be a connected and simply connected solvable Lie group, $L=[G, G]$ and let be $Z$ the centre of $L$.
	Assume that the action of $G$ on $Z$ has at least one purely imaginary weight. 
	Then  $\Prim(G)$ has no non-empty open quasi-compact subsets.
\end{theorem}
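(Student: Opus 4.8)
The plan is to transport the problem, through a chain of continuous open surjections and homeomorphisms, from $\Prim(G)$ to the quasi-orbit space $(\what{Z}/G)^\sim$, which is handled by Lemma~\ref{abelian}. The underlying elementary principle (already used in the proof of Lemma~\ref{abelian}) is this: if $\phi\colon X\to Y$ is continuous, open, and surjective and $Y$ has no non-empty quasi-compact open subsets, then neither does $X$, since $\phi(U)$ is a non-empty quasi-compact open subset of $Y$ for every non-empty quasi-compact open $U\subseteq X$ by \cite[Thm.~2, TG~I~62]{Bo71}.

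First I would record the structural facts needed to invoke Sections~\ref{section2}--\ref{quasiorbits}. The group $L=[G,G]$ is a connected, simply connected, nilpotent closed normal subgroup of $G$ and $A:=G/L$ is abelian, so condition~\eqref{cond1} of Setting~\ref{conds} holds; the center $Z$ of $L$ is characteristic in $L$, hence normal in $G$, and it is a vector group on which $G$ acts linearly by the conjugation representation $\alpha\colon G\to\GL(Z)$, whose differential $\de\alpha\colon\gg\to\End(Z)$ has a purely imaginary weight by hypothesis. The substantive input is condition~\eqref{minimal-i} of Setting~\ref{conds}: for every $\Pc\in\Prim(L)$ there is $T\in\what{G}$ with $\supp(T\vert_L)=\overline{G\cdot\Pc}$. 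This is where Pukanszky's analysis \cite{Pu71} of the unitary dual of connected simply connected solvable Lie groups enters: the closure of every $G$-orbit in $\what{L}$ is the spectrum of the restriction to $L$ of an irreducible representation of $G$, which via Remark~\ref{cond2} and Lemma~\ref{suppspec} is exactly condition~\eqref{minimal-i}. Granting this, Theorem~\ref{quasi-orbits} yields a homeomorphism $(\Prim(L)/G)^\sim\simeq(\Prim(G)/\what{A})^\sim$.

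Next I would reduce to $(\Prim(L)/G)^\sim$ and then to $\what{Z}$. The quasi-orbit map $Q\colon\Prim(G)\to(\Prim(G)/\what{A})^\sim$ for the $\what{A}$-action is continuous, open, and surjective by Lemma~\ref{quotient}\eqref{quotient_item_i}; composing with the homeomorphism of Theorem~\ref{quasi-orbits} and using the principle above, it suffices to show that $(\Prim(L)/G)^\sim$ has no non-empty quasi-compact open subsets. Since $L$ is nilpotent, hence amenable, Lemma~\ref{res} applied to $L$ and its center $Z$ provides a continuous open surjection $\Res^L_Z\colon\Prim(L)\to\what{Z}$, which is $G$-equivariant for the conjugation actions (a direct check on central characters). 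Consequently $Q_{\what{Z}}\circ\Res^L_Z\colon\Prim(L)\to(\what{Z}/G)^\sim$ is continuous, open, and surjective (by Lemma~\ref{res} and Lemma~\ref{quotient}\eqref{quotient_item_i}), and because $\Res^L_Z$ is continuous and $G$-equivariant it carries $G$-orbit closures to $G$-orbit closures, so $Q_{\what{Z}}\circ\Res^L_Z$ is constant on the fibres of the quasi-orbit map $Q_0\colon\Prim(L)\to(\Prim(L)/G)^\sim$ and hence factors as $\psi\circ Q_0$. The induced map $\psi\colon(\Prim(L)/G)^\sim\to(\what{Z}/G)^\sim$ is continuous (as $Q_0$ is a quotient map), surjective, and open (for $U$ open, $\psi(U)=(\psi\circ Q_0)(Q_0^{-1}(U))$ is open). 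Finally, identifying $\what{Z}\simeq Z^*$ $G$-equivariantly as in Remark~\ref{rem_abelian} and applying Lemma~\ref{abelian} with $K=G$, the space $(\what{Z}/G)^\sim\simeq(Z^*/G)^\sim$ has no non-empty quasi-compact open subsets; by the principle above applied first to $\psi$, then to $Q$ (together with the homeomorphism of Theorem~\ref{quasi-orbits}), neither does $\Prim(G)$.

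The main obstacle is the verification of condition~\eqref{minimal-i} of Setting~\ref{conds}: that every $G$-orbit closure in $\what{L}$ is realized as $\spec(T\vert_L)$ for some $T\in\what{G}$. This is the one genuinely nontrivial ingredient, resting on the representation theory of solvable Lie groups via \cite{Pu71}, and it likely deserves its own lemma; the rest of the argument is a diagram chase through continuous open surjective maps, fed by Lemma~\ref{res} (amenability of $L$) and Lemma~\ref{abelian} (the purely imaginary weight).
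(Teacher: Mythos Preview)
Your proposal is correct and follows essentially the same route as the paper: the paper packages the construction of the continuous open surjection $\Prim(G)\to(\what{Z}/G)^\sim$ as a separate proposition (Proposition~\ref{openmap}), built from exactly the ingredients you name (the quasi-orbit map, Theorem~\ref{quasi-orbits}, the $G$-equivariant map of Lemma~\ref{res}, and the factorization through quasi-orbits), and then concludes via Lemma~\ref{abelian}. The only cosmetic differences are that the paper passes to $\what{L}$ (using that $L$ is type~I) rather than staying with $\Prim(L)$, and it factors the $G$-action on $Z$ through $A=G/L$ before invoking Lemma~\ref{abelian}, whereas you apply that lemma directly with $K=G$; both are valid, and your identification of the Pukanszky input (verified in the paper via \cite[p.~83]{Pu73} and \cite[Thm.~4.5]{Fe62}) as the one substantive black box is accurate.
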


Before proving this theorem we give some important consequences. 
We recall the following definition (see \cite{AuMo66}).
\begin{definition}\normalfont
	Let $G$ be a Lie group.
	Then $G$ is said to be of \emph{type \R}  provided the spectra of the operators of the adjoint action of $G$
	on its Lie algebra $\gg$ are contained in the unit circle.
	Equivalently, for every $X\in\gg$ the spectrum of the operator
	$\ad_{\gg}X\colon\gg\to\gg$ should be contained in $\ie\RR$.
\end{definition}

\begin{corollary}\label{solvable-bis}
	Let $G$ be a connected and simply connected solvable Lie group of type \R. 
	Then  $\Prim(G)$ has no non-empty open quasi-compact subsets.
\end{corollary}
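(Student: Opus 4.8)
The plan is to derive this directly from Theorem~\ref{solvable-extra} by checking that, for a type~\R\ group, the action of $G$ on $Z$ automatically possesses a purely imaginary weight. First I would record that $\zg$, being the centre of the nilpotent ideal $[\gg,\gg]$, is a characteristic ideal of $\gg$; hence $\Ad(g)\zg=\zg$ for every $g\in G$, and under the identification of the connected simply connected abelian group $Z$ with the vector space $\zg$, the action of $G$ on $Z$ is $g\mapsto\Ad(g)\vert_{\zg}$, whose differential at $X\in\gg$ is $\ad_\gg X\vert_{\zg}$.

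Next I would invoke the defining property of type~\R: for every $X\in\gg$ the spectrum of $\ad_\gg X\colon\gg\to\gg$ is contained in $\ie\RR$. Since $\zg$ is an $\ad_\gg X$-invariant subspace of the finite-dimensional space $\gg$, the characteristic polynomial of $\ad_\gg X\vert_{\zg}$ divides that of $\ad_\gg X$, so $\spec(\ad_\gg X\vert_{\zg})\subseteq\spec(\ad_\gg X)\subseteq\ie\RR$ for every $X\in\gg$. In other words, the representation $\de\rho\colon\gg\to\End(\zg)$ giving the action of $G$ on $Z$ has all operators with spectrum contained in $\ie\RR$.

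Then, since $G$ is solvable, the remark following Definition~\ref{roots}, which is a consequence of Lie's theorem on representations of solvable Lie algebras, shows that such a representation admits a purely imaginary weight. Hence the hypothesis of Theorem~\ref{solvable-extra} is satisfied, and that theorem yields the conclusion that $\Prim(G)$ has no non-empty open quasi-compact subsets.

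There is essentially no substantive obstacle here beyond bookkeeping; the only two points that deserve a word of care are the spectral inclusion under passage to an invariant subspace (the elementary divisibility-of-characteristic-polynomials fact cited above) and the identification of the $G$-action on $Z$ with $\Ad\vert_{\zg}$, which uses that $Z$ is connected and simply connected so that $Z\cong\zg$ as $G$-modules via $\exp$.
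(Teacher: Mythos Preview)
Your argument is correct and follows exactly the route of the paper's own proof, which simply observes that the type~\R\ condition forces the action of $G$ on $Z$ to have purely imaginary weights and then invokes Theorem~\ref{solvable-extra}. You have merely unpacked the one-line justification (invariance of $\zg$, spectral inclusion for invariant subspaces, and the Lie-theorem remark after Definition~\ref{roots}) that the paper leaves implicit.
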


\begin{proof}
	The condition that $G$ is of type $R$ implies that the action of $G$ on $Z$ has purely imaginary weights.
	Therefore the result is a consequence of Theorem~\ref{solvable-extra}.
	\end{proof}

We recall that if  $G$ is  a connected simply connected solvable Lie group,
the primitive ideals of $C^*(G)$ are maximal, or  equivalently, $\Prim(G)$ is $T_1$,  if and only if $G$ is of type  \R; see \cite[Thm.~2]{Pu73}.
Therefore the previous corollary may be re-written as follows.

\begin{corollary}\label{solvable}
	Let $G$ be a connected and simply connected solvable Lie group such that $\Prim(G)$ is $T_1$. 
	Then  $\Prim(G)$ has no non-empty open quasi-compact subsets.
\end{corollary}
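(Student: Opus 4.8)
The plan is to reduce the statement to Corollary~\ref{solvable-bis} by invoking the Lie-algebraic characterization of the $T_1$ property recalled just above. First I would apply \cite[Thm.~2]{Pu73}: for a connected and simply connected solvable Lie group $G$, the primitive ideals of $C^*(G)$ are all maximal — equivalently, $\Prim(G)$ is $T_1$ — if and only if $G$ is of type~\R. Hence the hypothesis that $\Prim(G)$ is $T_1$ forces $G$ to be of type~\R.

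Once $G$ is known to be of type~\R, Corollary~\ref{solvable-bis} applies verbatim and yields that $\Prim(G)$ has no non-empty open quasi-compact subsets, which is exactly the desired conclusion. So the proof is a one-line deduction: $T_1$ for $\Prim(G)$ $\Longleftrightarrow$ type~\R $\Longrightarrow$ absence of non-empty open quasi-compact subsets.

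I do not anticipate any genuine obstacle: the only substantive input is the black-box equivalence from \cite{Pu73}, after which the previous corollary does all the work. If one wished to avoid citing that equivalence, one could instead retrace the argument behind Corollary~\ref{solvable-bis} — namely, check via Theorem~\ref{solvable-extra} that the $T_1$ assumption already implies that the action of $G$ on the centre $Z$ of $[G,G]$ has a purely imaginary weight — but this is more work and in fact yields a weaker statement, since being of type~\R is precisely what guarantees purely imaginary spectra, hence purely imaginary weights, for the full adjoint action, in particular for its restriction to $\zg$.
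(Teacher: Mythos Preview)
Your proposal is correct and matches the paper's approach exactly: the paper states immediately before this corollary that, by \cite[Thm.~2]{Pu73}, $\Prim(G)$ being $T_1$ is equivalent to $G$ being of type~\R, so that Corollary~\ref{solvable} is simply a restatement of Corollary~\ref{solvable-bis}. Your additional commentary about an alternative route is accurate but unnecessary here.
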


\begin{corollary}\label{AF-solvable-extra}
		Let $G$ be a connected and simply connected solvable Lie group, $L=[G, G]$ and let be $Z$ the centre of $L$.	
Assume that the action of $G$ on $Z$ has at least one purely imaginary weight. 
	Then $C^*(G)$ is AF-embeddable. 
	\end{corollary}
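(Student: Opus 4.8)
\textbf{Proof proposal for Corollary~\ref{AF-solvable-extra}.}

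The plan is to deduce AF-embeddability of $C^*(G)$ from the absence of nonempty open quasi-compact subsets of $\Prim(G)$ established in Theorem~\ref{solvable-extra}, together with the amenability of $G$ (as a solvable, hence amenable, locally compact group) and a known criterion from the literature. First I would recall that for a connected, simply connected solvable Lie group $G$, the $C^*$-algebra $C^*(G)$ is separable, nuclear (by amenability), and the ideal space $\Prim(G)$ is second countable. Under these hypotheses, the relevant criterion — the result of \cite{Ga20} referred to in the introduction — asserts that a separable nuclear $C^*$-algebra whose primitive ideal space has no nonempty quasi-compact open subset, and which satisfies the appropriate $K$-theoretic/exactness-free condition automatically available here because $C^*(G)$ is the group $C^*$-algebra of an amenable group, is AF-embeddable; more precisely one invokes that such an algebra has no nonzero bounded trace obstruction and is quasi-diagonal, and then Gabe's theorem upgrades quasi-diagonality to AF-embeddability. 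So the key input is exactly the conclusion of Theorem~\ref{solvable-extra}.

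The steps, in order, would be: (1) Observe $G$ is amenable, so $C^*(G) = C^*_r(G)$ is nuclear and exact; it is separable since $G$ is a second countable Lie group. (2) Apply Theorem~\ref{solvable-extra}: since the action of $G$ on $Z = Z([G,G])$ has a purely imaginary weight, $\Prim(G)$ has no nonempty open quasi-compact subset. (3) Translate this into the statement that $C^*(G)$ has no nonzero quotient with a nonzero projection in a "corner at the top", i.e., that $C^*(G)$ admits no nonzero unital quotient (equivalently $C^*(G)$ is stably projectionless in the relevant sense, or more precisely that it has no nonzero bounded trace, by the correspondence between quasi-compact open subsets of $\Prim$ and certain ideals); (4) Invoke the AF-embeddability theorem of \cite{Ga20} for separable nuclear $C^*$-algebras with this property, noting the hypotheses are met. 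This yields that $C^*(G)$ is AF-embeddable.

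The main obstacle — really the only nontrivial point, since everything else is a citation — is verifying that the precise hypotheses of the theorem from \cite{Ga20} are met, in particular matching "$\Prim(G)$ has no nonempty open quasi-compact subsets" with whatever combinatorial/topological condition Gabe's theorem requires (absence of nonzero bounded traces, or stable projectionlessness of every quotient, or the like). This is a bookkeeping matter: one must recall that for a separable $C^*$-algebra $\Ac$, nonempty open quasi-compact subsets of $\Prim(\Ac)$ correspond exactly to nonzero ideals that are "compactly contained", and that this is precisely the obstruction ruled out in the AF-embeddability criterion; combined with nuclearity from amenability, the quasi-diagonality results already used elsewhere in the paper, and Gabe's theorem, the conclusion follows. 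I would keep this part brief, citing \cite{Ga20} and \cite{Wi07} for the topological dictionary, rather than reproving the criterion.
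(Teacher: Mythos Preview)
Your approach is correct and matches the paper's own proof, which is simply ``the result is a consequence of Theorem~\ref{solvable-extra} and \cite[Cor.~B]{Ga20}.'' Your step~(3) and the surrounding hedging are unnecessary: Gabe's Corollary~B applies directly to any separable exact $C^*$-algebra whose primitive ideal space has no nonempty quasi-compact open subset, so once you have nuclearity (from amenability), separability, and the conclusion of Theorem~\ref{solvable-extra}, there is nothing to translate.
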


\begin{proof}
	The result is a consequence of Theorem~\ref{solvable-extra} and \cite[Cor.~B]{Ga20}.
\end{proof}

\begin{corollary}\label{AF-solvable}
	Let $G$ be a connected and simply connected solvable Lie group such that $\Prim(G)$ is $T_1$.
	Then $C^*(G)$ is AF-embeddable. 
\end{corollary}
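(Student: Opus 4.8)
The plan is to deduce Corollary~\ref{AF-solvable} directly from Corollary~\ref{AF-solvable-extra} by reducing the $T_1$ hypothesis on $\Prim(G)$ to the purely-imaginary-weight hypothesis. First I would recall that, for a connected and simply connected solvable Lie group $G$, the condition that $\Prim(G)$ is $T_1$ is equivalent to $G$ being of type \R, by \cite[Thm.~2]{Pu73} (this is exactly the equivalence recorded in the paragraph preceding Corollary~\ref{solvable}). So it suffices to show that if $G$ is of type \R, then the action of $G$ on the centre $Z$ of $L=[G,G]$ has at least one purely imaginary weight.

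Next I would observe that being of type \R\ means that for every $X \in \gg$ the spectrum of $\ad_\gg X$ lies in $\ie\RR$. Since $\zg \subseteq [\gg,\gg] \subseteq \gg$ is an ideal invariant under the adjoint action, the operators $\ad_\gg X$ restrict to operators on $\zg$, and their spectra are contained in the spectrum of $\ad_\gg X$, hence in $\ie\RR$. Thus the representation $\de\rho\colon \gg \to \End(\zg)$ giving the action of $G$ on $Z$ (after identifying the simply connected abelian group $Z$ with the vector space $\zg$) has the property that $\de\rho(X)$ has spectrum in $\ie\RR$ for every $X \in \gg$. Since $\gg$ is solvable, Lie's theorem on representations of solvable Lie algebras — invoked in exactly this form in the paragraph following Definition~\ref{roots} — guarantees the existence of a purely imaginary weight of $\rho$. (This is precisely the implication already used in the proof of Corollary~\ref{solvable-bis}.)

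Having established that the weight hypothesis of Corollary~\ref{AF-solvable-extra} is satisfied, the conclusion that $C^*(G)$ is AF-embeddable follows immediately from that corollary. In fact the cleanest way to phrase the write-up is: Corollary~\ref{AF-solvable} is the conjunction of Corollary~\ref{AF-solvable-extra} with the chain of equivalences ``$\Prim(G)$ is $T_1$'' $\iff$ ``$G$ is of type \R'' $\implies$ ``the $G$-action on $Z$ has a purely imaginary weight,'' the first by \cite[Thm.~2]{Pu73} and the second by Lie's theorem, exactly as in Corollary~\ref{solvable-bis}.

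I do not expect any genuine obstacle here: the statement is a formal consequence of results already proved in the section, and the only content is chaining together the $T_1$/type~\R\ equivalence with the purely-imaginary-weight criterion. The one point worth stating carefully is the passage from ``spectrum of $\ad_\gg X$ in $\ie\RR$ for all $X$'' to ``$\rho$ has a purely imaginary weight,'' which for solvable (as opposed to nilpotent) $\gg$ requires Lie's theorem rather than a weight-space decomposition; but this is already handled in the discussion after Definition~\ref{roots}, so it can simply be cited.
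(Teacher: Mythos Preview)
Your proposal is correct and takes essentially the same route as the paper. The paper's one-line proof cites Corollary~\ref{solvable} together with \cite[Cor.~B]{Ga20}, whereas you cite Corollary~\ref{AF-solvable-extra} after reducing $T_1$ to the purely-imaginary-weight hypothesis via \cite[Thm.~2]{Pu73} and Lie's theorem; since Corollary~\ref{solvable} is itself obtained from that same reduction (through Corollary~\ref{solvable-bis} and Theorem~\ref{solvable-extra}) and Corollary~\ref{AF-solvable-extra} is just Theorem~\ref{solvable-extra} plus \cite[Cor.~B]{Ga20}, the two arguments are the identical chain of implications packaged slightly differently.
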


\begin{proof}
	The result is a consequence of  Corollary~\ref{solvable} and \cite[Cor.~B]{Ga20}.
\end{proof}

To prove Theorem~\ref{solvable-extra} we need the following technical tool. 

\begin{proposition}\label{openmap}
	Let $G$ be a connected and simply connected solvable Lie group, $L=[G, G]$ and let be $Z$ the centre of $L$.
	Then there  is a continuous, surjective and open map
	$$ \Phi\colon \Prim(G) \to (\what{Z}/G)^\sim.$$
\end{proposition}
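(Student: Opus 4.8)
The plan is to produce $\Phi$ as a composition of three maps, each continuous, surjective and open, and then invoke that such maps are closed under composition. First I would apply Theorem~\ref{quasi-orbits} with $L=[G,G]$: the hypotheses of Setting~\ref{conds} are met, since $G/L$ is abelian and, by the results of \cite{Pu71} quoted in the introduction, for every $\Pc\in\Prim(L)$ there is $T\in\widehat{G}$ with $\supp(T\vert_L)=\overline{G\cdot\Pc}$. This yields a homeomorphism $(\Prim(L)/G)^\sim\xrightarrow{\ \sim\ }(\Prim(G)/\widehat{A})^\sim$. Composing its inverse with the canonical quasi-orbit map $\Prim(G)\to(\Prim(G)/\widehat{A})^\sim$, which is continuous, surjective and open by Lemma~\ref{quotient}\eqref{quotient_item_i}, gives a continuous, surjective, open map $\Prim(G)\to(\Prim(L)/G)^\sim$.

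Next I would pass from $(\Prim(L)/G)^\sim$ to $(\widehat{L}/G)^\sim$. Since $L$ is connected, simply connected and nilpotent, it is of type~I, so the kernel map $\kappa_L\colon\widehat{L}\to\Prim(L)$ is a bijection; in fact it is an equivariant homeomorphism, so by Lemma~\ref{equiv} (or directly by Lemma~\ref{quotient}) the induced map $(\widehat{L}/G)^\sim\to(\Prim(L)/G)^\sim$ is a homeomorphism. Thus I obtain a continuous, surjective, open map $\Prim(G)\to(\widehat{L}/G)^\sim$. It remains to map $(\widehat{L}/G)^\sim$ continuously, surjectively and openly onto $(\widehat{Z}/G)^\sim$. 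Here I would use the central-character map $R^L\colon\widehat{L}\to\widehat{Z}$ of Lemma~\ref{res}: since $L$ is amenable (being nilpotent), $R^L$ is continuous, surjective and open, and it is $G$-equivariant because $Z$ is characteristic in $L$ and $G$ acts by automorphisms of $L$. An equivariant continuous open surjection between $G$-spaces descends to a continuous open surjection of quasi-orbit spaces (argue exactly as in the proof of Lemma~\ref{quotient}\eqref{quotient_item_i}, or factor through $\Cl$ using Remark~\ref{upper-tops} and Lemma~\ref{equiv}-type reasoning), giving the desired map $(\widehat{L}/G)^\sim\to(\widehat{Z}/G)^\sim$.

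Composing the three stages produces $\Phi\colon\Prim(G)\to(\widehat{Z}/G)^\sim$ with all three properties, since each property is preserved under composition (for openness and continuity this is immediate; for surjectivity as well). The main obstacle I anticipate is the bookkeeping around equivariance and the descent of open maps to quasi-orbit spaces: one must be careful that $G$ acts on $\widehat{L}$, $\Prim(L)$ and $\widehat{Z}$ compatibly (which holds because $Z\subseteq[L,L]$-type characteristic subgroup considerations make $Z$ $G$-invariant) and that the quotient by $\sim$ interacts well with equivariant open maps. Verifying that the composite with Theorem~\ref{quasi-orbits}'s homeomorphism is legitimate — i.e. that the hypothesis (2) of Setting~\ref{conds} really does hold for $G$ and $L=[G,G]$ via \cite{Pu71} — is the one genuinely non-formal input, but it is exactly the point recorded in the introduction, so I would cite it and move on.
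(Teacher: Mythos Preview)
Your proposal is correct and follows essentially the same route as the paper: apply Theorem~\ref{quasi-orbits} (after verifying Setting~\ref{conds}\eqref{minimal-i}), compose the resulting homeomorphism with the quasi-orbit map to get $\Prim(G)\to(\widehat{L}/G)^\sim$ (using that $L$ is type~I so $\Prim(L)\simeq\widehat{L}$), and then descend $R^L\colon\widehat{L}\to\widehat{Z}$ to quasi-orbit spaces. The paper differs only in the level of detail at the two points you flag as obstacles: it verifies hypothesis~\eqref{minimal-i} explicitly via \cite{Pu73} and Fell's restriction theorem rather than by bare citation, and it carries out the descent of $R^L$ through a commutative diagram, invoking \cite[Lemma~6.10]{Wi07} for existence and continuity of $\widetilde{R^L}$ and \cite[Ch.~3, \S4, Props.~2--3]{Bo71} for openness, rather than the sketch you give.
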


\begin{proof}
	We show first that the group $G$ and its closed normal subgroup $L$ satisfy the conditions is Theorem~\ref{quasi-orbits}.
	We note that the Lie group $L$ is nilpotent  of type $I$ (even liminary).  
	The quotient $A= G/L$ is abelian since $L=[G, G]$, so it remains to show that for every 
	$\pi\in \what{L}$ there exists $T\in \what G$ such that 
	\begin{equation}\label{solvable-extra-1}
	\supp (T\vert_L) =\overline{G\cdot \Ker \pi}.
	\end{equation}
	Since $G$ is a connected simply connected solvable Lie group,
	for every $\pi \in \what{L}$ there is a closed normal subgroup $K$ of $G$, $L\subseteq K$, a representation 
	$\rho \in \what{K}$ with $\rho\vert_L= \pi$ such that $T_0=\Ind_K^G \rho$ is a factor representation.
	(See \cite[p.~83--83]{Pu73} and the references therein.)
	Let $T\in \what{G}$ be such that $T\approx T_0$. 
	By \cite[Thm.~4.5]{Fe62} we have that
	$$ T\vert_K \approx T_0\vert_K\approx G\cdot \rho \subseteq \what{K}.$$
	Hence 
	$$ T\vert_L = (T\vert_K)\vert_L \approx  G\cdot \rho\vert_L = G \cdot  \pi,$$
	and thus \eqref{solvable-extra-1} holds.

By Theorem~\ref{quasi-orbits} and the fact that $L$ is of type I, we obtain a homeomorphism
	$$
R'\colon (\Prim(G)/\widehat{A})^\sim \to  (\what{L}/G)^\sim.
	$$
	Denote by  $Q \colon \Prim(G)\to  (\Prim(G)/\widehat{A})^\sim$ the quasi-orbit map (see Lemma~\ref{quotient}); it is a continuous, open and surjective map. Thus, if we set $R:= R'\circ Q$ we obtain a continuous, open and surjective map
	$$R\colon \Prim(G) \to  (\what{L}/G)^\sim.$$

	On the other hand, by Lemma~\ref{res} there is a continuous, surjective and open map 
	$R^L \colon \what L \to \what Z$.
	It is easy to see that $R^L$ is also $G$-equivariant, hence it gives rise to a continuous, surjective and open mapping 
	$\overline{R^L}  \colon \what{L}/G \to \what{Z}/G$.

	Let $r_L\colon \what {L} /G\to (\what{L}/G)^\sim$ 
	be the canonical continuous mapping from $ \what {L} /G$ onto its $T_0$-ization 
	(in the sense of Definition~\ref{T0ization}),
	and let $r_Z\colon \widehat{Z}/G\to (\widehat{Z}/G)^\sim$ 
	be its analogous map for the group $Z$. 
	Then $r_Z\circ \overline{R^L}\colon \what{L}/G\to (\widehat{Z}/G)^\sim$ is continuous. 
	Hence 
	there is a continuous mapping 
	$\widetilde{R^L}\colon (\what{L}/G)^\sim \to(\widehat{Z}/G)^\sim$ 
	with $\widetilde{R^L}\circ r_L=r_Z\circ\overline{R^L}$,  by \cite[Lemma 6.10]{Wi07}.

	We thus obtain 
	the commutative diagram 
	$$\xymatrix{
		\what{L} \ar[d]_{q_G} \ar[r]^{R^L} \ar@/_3pc/[dd]_{\kappa_L} & \widehat{Z} \ar[d]^{q_Z} 
		\ar@/^3pc/[dd]^{\kappa_Z}
		\\
		\what{L}/G \ar[d]_{r_L} \ar[r]^{\ \ \overline{R^L}} & \widehat{Z}/G \ar[d]^{r_Z}	\\
		(\what{L}/G)^\sim \ar[r]^{\ \ \widetilde{R^L}} 
		& (\widehat{Z}/G)^\sim
	}
	$$
	where the surjective mappings $\kappa_L:=r_L\circ q_L$ and $\kappa_Z:=r_Z\circ q_Z$ are continuous and open by Lemma~\ref{quotient}\eqref{quotient_item_i}. 
	Thus $\kappa_Z\circ R ^L=\widetilde{R^L}\circ\kappa_G$, 
	where $\kappa_Z\circ R^L$ and $\kappa_L$ are surjective open mappings. 
	Therefore, by \cite[Ch. 3, \S 4, Props. 2--3]{Bo71} again, the continuous mapping $\widetilde{R^L}$ is open.

	The map
	$$\Phi :=  \wtilde{R^L}\circ R  \colon \Prim(G)\to (\what{Z}/G)^\sim$$
	is a continuous, open and surjective map, thus it
	satisfies the properties in the statement.
\end{proof}

\begin{proof}[Proof of Theorem~\ref{solvable-extra}.]
	The closed, normal subgroup $L$ is connected and simply connected, thus the quotient  $A:=G/L$ is an abelian Lie group, connected 
	and simply connected,
	and its Lie algebra is $\ag = \gg/[\gg, \gg]$. 
	
	The  action of $G$ on $Z$,  $\alpha \colon G \to \End(Z)$,  is trivial on $L$, therefore there is a group homomorphism 
	$\alpha_Z\colon A \to \End(Z)$ such that $\alpha_Z \circ Q = \alpha$, where $Q$ is the quotient map $Q\colon G\to A $.
	Then $\alpha_Z$ induces a natural action of $A$ on $Z^*\simeq \what G$, and $(Z^*/G)^\sim = (Z^*/A)^\sim$.

	Since the action of  $G$  on $Z$ has at least one purely imaginary weight, the action $\de \alpha_Z\colon \ag \to \End(Z)$ 
	has  at least 
	one purely imaginary weights. 
	Then the assertion in the statement follows from  Proposition~\ref{openmap} and Lemma~\ref{abelian}.
\end{proof}

\begin{example}[Generalized $ax+b$-groups]
	\normalfont Let $\Vc$ be a finite-dimensional real vector space, $D\in\End(\Vc)$, 
and $G_D:=\Vc\rtimes_{\alpha_D}\RR$ their corresponding generalized $ax+b$-group. 
Then  we claim that the following assertions are equivalent: 
\begin{enumerate}[{\rm(i)}]
\item\label{ax+b_item1} Either $\mathrm{Re}\, z>0$ for every $z\in\spec(D)$ or $\mathrm{Re}\, z<0$ for every $z\in\spec(D)$. 
\item\label{ax+b_item2} The $C^*$-algebra $C^*(G_D)$ is not quasidiagonal. 
\item\label{ax+b_item3} The $C^*$-algebra $C^*(G_D)$ is not AF-embeddable. 
\item\label{ax+b_item4} There exists a nonempty quasi-compact open subset of $\widehat{G_D}$. 
\item\label{ax+b_item5} There exists a nonempty quasi-compact open subset of $\Prim(G_D)$. 
\item\label{ax+b_item6} The set $\widehat{G_D}\setminus\Hom(G_D,\TT)$ is a  nonempty quasi-compact open subset of $\widehat{G_D}$.
\item\label{ax+b_item8} There exist nonzero self-adjoint idempotent elements of $C^*(G_D)$. 
 \end{enumerate}

\begin{proof}[Proof of claim]
\eqref{ax+b_item1}$\iff$\eqref{ax+b_item2}$\iff$\eqref{ax+b_item3}: 
See \cite[Th. 2.15]{BB18}.

\eqref{ax+b_item1}$\iff$\eqref{ax+b_item4}$\implies$\eqref{ax+b_item6}: 
See \cite[Th. 1.1]{GKT92}.

\eqref{ax+b_item6}$\implies$\eqref{ax+b_item4}: Obvious. 

\eqref{ax+b_item4}$\implies$\eqref{ax+b_item5}: 
The canonical mapping $\widehat{G_D}\to\Prim(G_D)$, $[\pi]\mapsto\Ker\pi$, is continuous and open, hence it maps any nonempty quasi-compact open subset of $\widehat{G_D}$ onto a nonempty quasi-compact open subset of $\Prim(G_D)$. 

\eqref{ax+b_item5}$\implies$\eqref{ax+b_item4}: 
Since there exists a nonempty quasi-compact open subset of $\Prim(G_D)$, 
it follows by Theorem~4.1 that ${\mathrm Re}\,z\ne 0$ for every $z\in\spec(D)$, hence $G_D$ is an exponential Lie group. 
In particular $G_D$ is type~I, hence the canonical mapping 
$\widehat{G_D}\to\Prim(G_D)$, $[\pi]\mapsto\Ker\pi$, is a homeomorphism. 
This shows that \eqref{ax+b_item4} follows by the hypothesis~\eqref{ax+b_item5}.

\eqref{ax+b_item8}$\implies$\eqref{ax+b_item4}: 
 We actually note a more general fact: 
If $\Ac$ is a $C^*$-algebra and $0\ne p=p^*=p^2\in\Ac$, 
then the set $Z_p:=\{[\pi]\in\widehat{\Ac}\mid \pi(p)\ne0\}$ is a nonempty quasi-compact open subset of $\widehat{\Ac}$. 
In fact, for every  $*$-representation $\pi\colon\Ac\to\Bc(\Hc)$, the operator $\pi(p)\in\Bc(\Hc)$ is an orthogonal projection hence the condition $\pi(p)\ne0$ is equivalent to  $\Vert\pi(p)\Vert\ge 1$. 
Hence $Z_p=\{[\pi]\in\widehat{\Ac}\mid \Vert\pi(p)\Vert>0\}$, and then 
$Z_p$ is open since the function $[\pi]\mapsto\Vert\pi(p)\Vert$ is lower semicontinuouson $\widehat{\Ac}$ by \cite[Prop.~3.3.2]{Dix77}.
On the other hand $Z_p=\{[\pi]\in\widehat{\Ac}\mid \Vert\pi(p)\Vert\ge 1\}$,  
hence $Z_p$ is quasi-compact by \cite[Prop. 3.3.7]{Dix77}. 
And finally, $Z_p\ne\emptyset$ since $p\ne0$.

\eqref{ax+b_item1}$\implies$\eqref{ax+b_item8}: 
See \cite[Lemma 2.3]{GKT92}.
\end{proof}
\end{example}

\section{AF-embeddability of simply connected Lie groups with $T_1$ primitive ideal spaces}\label{section5}

We prove here the main result of the paper, Theorem~\ref{AF-Lie}.
Along with the results of the preceding sections, its proof requires the following two lemmas.

\begin{lemma}\label{c-a}
	Let be  $K$ a compact group that acts on a topological space $X$ by a continuous action 
	$K \times X \to X$, and let $q\colon X \to X/K$ be the corresponding quotient map. 
	Then for $C\subseteq X/K$, we have that $C$ is quasi-compact if and only if $q^{-1}(C)$ is quasi-compact.
\end{lemma}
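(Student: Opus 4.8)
The statement is a general topological fact about a compact group $K$ acting continuously on $X$ with quotient map $q\colon X\to X/K$. The easy direction is ``$q^{-1}(C)$ quasi-compact $\implies$ $C$ quasi-compact'': since $q$ is continuous and surjective onto $X/K$, it restricts to a continuous surjection $q^{-1}(C)\to C$, and a continuous image of a quasi-compact set is quasi-compact. The substance is the converse: assuming $C\subseteq X/K$ is quasi-compact, show $q^{-1}(C)$ is quasi-compact. This is where compactness of $K$ must be used (it is false for general $G$, e.g.\ the non-closed orbits appearing elsewhere in the paper), so the proof should exploit that the action map $a\colon K\times X\to X$, $(k,x)\mapsto k\cdot x$, together with the product with the compact $K$, behaves well with respect to saturations.

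\textbf{Key steps.} First I would recall the standard fact that for a \emph{compact} group $K$ acting continuously on $X$, the quotient map $q$ is a \emph{closed} map, and more relevantly that $q$ is \emph{proper} in the sense that preimages of quasi-compact sets are quasi-compact; but since the lemma is precisely asking us to prove this, I would give a direct open-cover argument. Let $\{U_i\}_{i\in I}$ be an open cover of $q^{-1}(C)$ by open subsets of $X$. For each $x\in q^{-1}(C)$ the fibre $q^{-1}(q(x)) = K\cdot x$ is the continuous image of the compact set $K$ under $k\mapsto k\cdot x$, hence quasi-compact, so it is covered by finitely many of the $U_i$; let $V_x$ be their union, an open set containing $K\cdot x$. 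Now the crucial point: I would use a ``tube''-type argument to find an open, $K$-saturated neighbourhood $W_x$ of $K\cdot x$ contained in $V_x$. Concretely, $W_x := X\setminus K\cdot(X\setminus V_x)$; one checks $K\cdot x\subseteq W_x\subseteq V_x$, and $W_x$ is open because $X\setminus V_x$ is closed and $a(K\times(X\setminus V_x))$ is the continuous image of $K\times(X\setminus V_x)$ — here compactness of $K$ is used to guarantee this image is closed (the projection $K\times X\to X$ is a closed map when $K$ is compact, by the tube lemma, and $a$ factors appropriately), and $W_x$ is $K$-saturated by construction. Then $q(W_x)$ is open in $X/K$ since $q$ is open, and $\{q(W_x)\}_{x\in q^{-1}(C)}$ covers $C$; extract a finite subcover $q(W_{x_1}),\dots,q(W_{x_n})$. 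Since each $W_{x_j}$ is $K$-saturated, $q^{-1}(q(W_{x_j})) = W_{x_j}\subseteq V_{x_j}$, so $q^{-1}(C)\subseteq W_{x_1}\cup\dots\cup W_{x_n}\subseteq V_{x_1}\cup\dots\cup V_{x_n}$, and each $V_{x_j}$ was a finite union of the original $U_i$'s; this yields the desired finite subcover of $q^{-1}(C)$.

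\textbf{Main obstacle.} The only delicate point is the tube-lemma step: verifying that $K\cdot F$ is closed whenever $F\subseteq X$ is closed and $K$ is compact, equivalently that the projection $\mathrm{pr}_X\colon K\times X\to X$ is a closed map. This is classical but should be cited or sketched (for each point $x\notin K\cdot F$, compactness of $K$ lets one separate the compact slice $K\times\{x\}$ from the closed set $a^{-1}(F)$ pulled back appropriately, producing a basic open box missing it). Once that is in hand, the saturation $W_x = X\setminus K\cdot(X\setminus V_x)$ works cleanly and the rest is a routine compactness bookkeeping. An alternative, possibly shorter, route is to invoke directly that $X/K$ carries the quotient topology and that $q$ is a closed continuous open surjection with quasi-compact fibres, and then cite a standard criterion (e.g.\ \cite[Ch.~I, \S10]{Bo71} on proper maps: a continuous closed surjection with quasi-compact fibres is proper, and proper maps pull back quasi-compact sets to quasi-compact sets); I would likely present the self-contained cover argument but mention this as the conceptual reason.
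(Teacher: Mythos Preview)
Your proposal is correct but takes a genuinely different route from the paper's own proof. The paper argues the nontrivial direction with nets: given a net $(x_j)$ in $q^{-1}(C)$, pass to a subnet so that $q(x_j)\to c$ in $C$; then use openness of $q$ to lift this convergence, obtaining $k_j\in K$ and $x\in q^{-1}(c)$ with $k_j\cdot x_j\to x$; finally use compactness of $K$ to pass to a further subnet with $k_j\to k$, whence $x_j=k_j^{-1}\cdot(k_j\cdot x_j)\to k^{-1}\cdot x\in q^{-1}(C)$. Your argument instead works with open covers and the tube lemma, building the $K$-saturated open tube $W_x=X\setminus K\cdot(X\setminus V_x)$ around each orbit and reducing to a finite subcover of $C$.

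Both approaches hinge on exactly one use of compactness of $K$: for the paper it is the extraction of a convergent subnet of the $k_j$, while for you it is the closedness of $K\cdot F$ for closed $F$ (equivalently, properness of the projection $K\times X\to X$). The paper's proof is shorter once one accepts the net-lifting property of open surjections, which it uses without comment; your cover argument is more self-contained and makes explicit the underlying fact that $q$ is a proper map (closed, with quasi-compact fibres), which you rightly identify as the conceptual reason. Either write-up would be acceptable here; your tube-lemma step is correctly set up and the bookkeeping you outline goes through without difficulty.
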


\begin{proof}
	Assume first that $q^{-1}(C)$ is quasi-compact. Since $q$ is surjective and continuous, 
	$C= q(q^{-1}(C))$ is quasi-compact.
	
	For the direct implication, let  $\{x_j\}_{j \in J}$ be any net in $q^{-1}(C)$. 
	Then $\{q(x_j)\}$ is a net in $C$.  
	Since $C$ is quasi-compact, selecting a suitable subnet, we may assume that there is $c\in X/K$ such that 
	$q(x_j) \to c$ in $X/K$. 
	Hence there exist $k_j$, $j\in J$, and $x\in q^{-1}(c)$ such that $k_j\cdot x_j \to x$. 
	The group $K$ is compact, therefore, again by selecting a suitable subnet, we may assume that there is $k \in K$ 
	such that $k_j \to k$, and thus $k_j^{-1} \to k^{-1}$ in $K$.
	By the continuity of the action we have then $x_j= k_j^{-1} \cdot (k_j \cdot x_j) \to k^{-1}\cdot x \in X$. 
	On the other hand, $q(k^{-1}\cdot  x) = q(x)=c$, hence $k^{-1}\cdot  x\in q^{-1} (c) \subseteq q^{-1}(C)$. 
	
	Thus every net in $q^{-1} (C)$ has a subnet that converges to some point in $q^{-1}(C)$, hence
	$q^{-1}(C)$ is quasi-compact.
\end{proof}

\begin{lemma}\label{KG2}
	Let $G_2$ a locally compact group, $K$ a compact group that acts continuously on $G_2$ and consider 
	$G_1 = K \ltimes G_2$.
	Assume that
	$\Prim(G_2)$ is $T_1$. 
	If $\Prim(G_2)$ has no non-empty open quasi-compact subsets, then $\Prim(G_1)$ has no 
	non-empty open quasi-compact subsets.
\end{lemma}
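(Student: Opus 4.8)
The plan is to relate $\Prim(G_1)$ to $\Prim(G_2)$ via the restriction map and use the compactness of $K$ together with Lemma~\ref{c-a}. First I would invoke the fact that, since $K$ is compact and acts on $G_2$ with $G_1 = K \ltimes G_2$, the restriction of representations and the Mackey machine give a continuous, open, surjective map $\Prim(G_1) \to (\Prim(G_2)/K)^{\sim}$, or at least a continuous open surjection onto the quasi-orbit space; indeed the amenability-free part of the preceding development (Theorem~\ref{quasi-orbits} applied with $L = G_2$ and $A = G_1/G_2 \cong K$ — but here $K$ need not be abelian, so instead one argues directly) shows that restriction induces a well-behaved map. The cleanest route, though, is: the hypothesis that $\Prim(G_2)$ is $T_1$ forces $K$-orbits in $\Prim(G_2)$ to be closed (they are images of the compact $K$), so the quasi-orbit space $(\Prim(G_2)/K)^{\sim}$ coincides with the honest orbit space $\Prim(G_2)/K$, and by Lemma~\ref{c-a} a subset of $\Prim(G_2)/K$ is quasi-compact iff its preimage in $\Prim(G_2)$ is.

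Next I would argue by contraposition. Suppose $\Prim(G_1)$ has a nonempty quasi-compact open subset $U$. Pushing $U$ forward through the continuous open surjection $\Phi \colon \Prim(G_1) \to \Prim(G_2)/K$ yields, by \cite[Thm.~2, TG I 62]{Bo71}, a nonempty quasi-compact open subset $\Phi(U) \subseteq \Prim(G_2)/K$. By Lemma~\ref{c-a}, the preimage $q^{-1}(\Phi(U)) \subseteq \Prim(G_2)$ under the quotient map $q \colon \Prim(G_2) \to \Prim(G_2)/K$ is quasi-compact; it is also open since $q$ is continuous, and nonempty since $\Phi(U) \neq \emptyset$ and $q$ is surjective. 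This contradicts the hypothesis that $\Prim(G_2)$ has no nonempty quasi-compact open subsets, which proves the lemma.

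The step I expect to be the main obstacle is constructing the continuous open surjection $\Phi\colon \Prim(G_1)\to \Prim(G_2)/K$ and verifying that the relevant $K$-orbits in $\Prim(G_2)$ are closed so that the quasi-orbit space is the orbit space. For the closedness: $K$ is compact, $\Prim(G_2)$ is $T_1$, so every $K$-orbit, being the continuous image of the compact connected-or-not space $K$, is a quasi-compact — and since $T_1$ spaces need not be Hausdorff one must be slightly careful, but the orbit is a finite-or-compact subset whose closure, by the quasi-orbit description, agrees with the orbit because $\Prim(G_2)/K$ being $T_1$ is inherited appropriately; alternatively one checks directly that $\overline{K\cdot \Pc} = K\cdot\Pc$ using that $K\cdot\Pc$ is quasi-compact and $\Prim(G_2)$ is $T_1$ hence points are closed, so the orbit is closed. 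For the map $\Phi$: one uses that $G_2$ is normal in $G_1$ with compact (hence amenable, hence admitting the needed weak-containment theory) quotient $K$, together with the restriction-of-representations map $\widehat{G_1}\to \Tc(G_2)$ and the support/spectrum formalism of Section~\ref{section2}, exactly paralleling Lemma~\ref{restriction} and Proposition~\ref{openmap}; since $K$ is compact the induced representations decompose nicely and one gets surjectivity and openness of $\Phi$ from Lemma~\ref{quotient} and the fact that $\kappa_{G_1}$ is open. I would write this out invoking Theorem~\ref{quasi-orbits} in the form already available (noting $K$ compact amenable covers the needed weak-containment inputs, or adapting the argument of Proposition~\ref{openmap} verbatim with $Z$ replaced by $G_2$ and $G$ by $G_1$).
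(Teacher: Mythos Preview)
Your overall strategy is exactly the paper's: produce a continuous open surjection $R\colon\Prim(G_1)\to\Prim(G_2)/K$, push a hypothetical nonempty quasi-compact open set forward, pull it back through the quotient map $q$ via Lemma~\ref{c-a}, and contradict the hypothesis on $\Prim(G_2)$. You have also correctly located the two nontrivial steps.

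Both of those steps, however, are genuine gaps as written. For the map $R$, Theorem~\ref{quasi-orbits} is unavailable because it requires the quotient to be abelian, and Proposition~\ref{openmap} is built on Theorem~\ref{quasi-orbits}, so ``adapting verbatim'' does not go through. The paper does not attempt an internal argument here: it observes that $C^*(G_1)\simeq K\ltimes C^*(G_2)$ by \cite[Prop.~3.11]{Wi07} and then invokes \cite[Thm.~4.8]{GoLa89}, which for crossed products gives precisely the continuous open surjective restriction map $\Prim(G_1)\to(\Prim(G_2)/K)^\sim$.

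Your argument for closedness of $K$-orbits contains an actual error: the implication ``$K\cdot\Pc$ is quasi-compact and points of $\Prim(G_2)$ are closed, so the orbit is closed'' is false for general $T_1$ spaces (in the cofinite topology on an infinite set every subset is quasi-compact but very few are closed). One needs something beyond raw $T_1$ topology, and the paper simply cites \cite[Cor., p.~213]{MoRo76} for the fact that $K$-orbits in $\Prim(G_2)$ are closed, whence $(\Prim(G_2)/K)^\sim=\Prim(G_2)/K$. With those two citations in place, your contrapositive argument is identical to the paper's.
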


\begin{proof}
	We have that $C^*(G_1)= K \ltimes C^*(G_2)$ (\cite[Prop.~3.11]{Wi07}). 
	Then the restriction of ideals gives a mapping $R\colon \Prim (G_1) \to \left(\Prim(G_2)/K\right)^\sim$, which is continuous, surjective and open. (See \cite[Thm.~4.8]{GoLa89}.)
	Since the action of $K$ on $\Prim(G_2)$ is continuous, $K$ is compact, and $\Prim(G_2)$ is $T_1$ it follows by
	\cite[Cor., p. 213]{MoRo76} that the orbits of $K$ in $\Prim(G_2)$ are closed, hence
	$(\Prim(G_2)/K)^\sim =\Prim(G_2)/K$. 
	Thus we get that $R\colon \Prim (G_1) \to \Prim(G_2)/K$ is continuous, surjective and open.
	The quotient map $q\colon \Prim(G_2)\to \Prim(G_2)/K$ is continuous, open and surjective, as well. 
	
	Assume now that there is $C$ an open quasi-compact subset of $\Prim(G_1)$.
	Then $R(C)$ is quasi-compact and open in $\Prim(G_2)/K$, and by Lemma~\ref{c-a}, $q^{-1}(R(C))$ is a 
	quasi-compact and open subset of $\Prim(G_2)$. 
	By hypothesis this implies that $q^{-1} (R(C))=\emptyset$, hence $C= \emptyset$. 
	This completes the proof of the lemma.
\end{proof}

\begin{theorem}\label{AF-Lie}
	Let $G$ be a simply connected Lie group such that $\Prim(G)$ is $T_1$. 
	Then  both $C^*(G)$ and the reduced $C^*$-algebra  $C^*_r(G)$ are AF-embeddable. 
\end{theorem}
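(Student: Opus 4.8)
\textbf{The plan is} to reduce the general simply connected Lie group $G$ to the solvable case already handled in Corollary~\ref{AF-solvable}, by peeling off a maximal compact-type factor via the Levi decomposition and by invoking the structural consequences of the $T_1$ hypothesis. First I would recall that a simply connected Lie group $G$ with $\Prim(G)$ being $T_1$ is necessarily \emph{amenable}: if $G$ had a noncompact semisimple part, or more generally a closed subgroup giving rise to a non-$T_1$ point structure, then $\Prim(G)$ would fail to be $T_1$ (this is where the characterization from \cite{MoRo76}, \cite{Pu73} enters — the groups in question are, up to the compact factor, of type \R). So $G = K \ltimes R$ where $R$ is the (simply connected, solvable) radical and $K$ is a simply connected group whose image acts on $R$; combined with the $T_1$ condition, $K$ must be compact (a simply connected compact Lie group, i.e. semisimple of compact type), and $R$ must again have $\Prim(R)$ being $T_1$, so $R$ is of type \R\ by \cite[Thm.~2]{Pu73}.

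Next, for the solvable radical $R$: by Corollary~\ref{AF-solvable}, $C^*(R)$ is AF-embeddable; moreover by Corollary~\ref{solvable}, $\Prim(R)$ has no nonempty quasi-compact open subsets. Now I would apply Lemma~\ref{KG2} with $G_2 = R$ and $G_1 = K \ltimes R = G$: since $\Prim(R)$ is $T_1$ and has no nonempty quasi-compact open subsets, the lemma gives that $\Prim(G)$ likewise has no nonempty quasi-compact open subsets. With that absence established, AF-embeddability of $C^*(G)$ follows from \cite[Cor.~B]{Ga20} exactly as in the proofs of Corollaries~\ref{AF-solvable-extra} and \ref{AF-solvable} — that result of \cite{Ga20} says that for a (separable, nuclear) $C^*$-algebra of an amenable group, absence of nonempty quasi-compact open subsets in the primitive ideal space, together with the appropriate homotopy/K-theoretic hypotheses that are automatic here, yields AF-embeddability. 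Finally, AF-embeddability of the reduced $C^*$-algebra $C^*_r(G)$ is immediate: $G$ is amenable, so $C^*_r(G) = C^*(G)$.

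\textbf{The main obstacle} I anticipate is the reduction to the semidirect-product form $G = K \ltimes R$ with $K$ \emph{compact} — i.e., justifying carefully that the $T_1$ condition on $\Prim(G)$ forces the Levi factor to be of compact type and simply connected, and that one may realize $G$ as an honest semidirect product $K \ltimes R$ so that Lemma~\ref{KG2} applies verbatim. This needs the precise Lie-algebraic characterization of the $T_1$ class: in Lie algebra terms, $\gg = \kg \oplus \mathfrak{r}$ with $\kg$ compact semisimple and the solvable radical $\mathfrak{r}$ satisfying that $\ad_{\gg} X$ has purely imaginary spectrum for $X$ in $\mathfrak{r}$ — and one must check the $T_1$ property descends to $\Prim(R)$ (so that Corollaries~\ref{solvable}, \ref{AF-solvable} apply). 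One subtlety is that $R$ need not be a \emph{direct} factor, so $\Prim(R)$ being $T_1$ should be argued from the restriction behavior of primitive ideals along the closed normal subgroup $R \trianglelefteq G$ and the known characterization, rather than from a splitting. Once that structural reduction is in hand, the remaining steps — applying Lemma~\ref{KG2} and then \cite[Cor.~B]{Ga20}, and observing $C^*_r(G) = C^*(G)$ by amenability — are routine.
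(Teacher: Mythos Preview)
Your reduction contains a genuine gap: the claim that a simply connected Lie group $G$ with $\Prim(G)$ being $T_1$ must be amenable is false. Any simply connected noncompact semisimple Lie group --- for instance the universal cover of $\mathrm{SL}_2(\RR)$ --- is liminary, hence has $T_1$ (even Hausdorff) primitive ideal space, yet is not amenable. The characterization from \cite{MoRo76}, \cite{Pu73}, \cite{Pu78} does \emph{not} force the Levi factor to be compact; it allows an arbitrary semisimple \emph{direct} factor. Consequently your Levi decomposition $G=K\ltimes R$ with $K$ compact is not available in general, Lemma~\ref{KG2} cannot be applied to all of $G$, and the final identification $C^*_r(G)=C^*(G)$ fails.

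The paper repairs exactly this point by first writing $G=S_1\times G_1$ with $S_1$ simply connected semisimple and $G_1$ having no semisimple direct factor (\cite[Proof of Thm.~2]{Pu78}). The factor $S_1$ is handled separately: it is liminary, hence type~I, and $C^*(S_1)$, $C^*_r(S_1)$ are AF-embeddable by the type~I argument of Remark~\ref{prim2}; nuclearity of $C^*(S_1)$ then gives $C^*(G)\simeq C^*(S_1)\otimes C^*(G_1)$ and likewise for the reduced algebras, together with $\Prim(G)\simeq\Prim(S_1)\times\Prim(G_1)$, whence $\Prim(G_1)$ is $T_1$. Only \emph{then} does Pukanszky's structural result \cite[Prop.~3]{Pu78} apply to give $G_1=K\ltimes G_2$ with $K$ compact and $G_2$ solvable of type~\R, so that $G_1$ is amenable and your remaining steps (Corollary~\ref{solvable-bis}, Lemma~\ref{KG2}, \cite[Cor.~B]{Ga20}) go through for $G_1$. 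In short, your outline is correct for the amenable piece $G_1$, but you are missing the preliminary splitting-off of the possibly noncompact semisimple direct factor and its separate treatment via liminarity.
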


\begin{proof}
	We can write $G= S_1\times G_1$, where both $S_1$ and $G_1$ are simply connected Lie groups, $S_1$ is semisimple, and $G_1$
	has no semisimple factors. (See \cite[Proof of Thm.~2, p. 47]{Pu78}.)
	
	Since $S_1$ is liminary, $C^*(S_1)$ and $C^*_r(S_1)$ are AF-embeddable (see Remark~\ref{prim2}). 
	On the other hand,  $C^*(S_1)$ is nuclear  by \cite[Prop. 2.7.4]{BO08}, since it is liminary hence type~I. 
	Therefore we have that 
	\begin{align}
	C^*(G) &  \simeq C^*(S_1)\otimes C^*(G_1), \label{AF-Lie-1}\\
	C^*_r(G) &  \simeq C^*_r(S_1)\otimes C^*_r(G_1). \label{AF-Lie-2}
	\end{align}
	Thus it suffices to prove that $C^*(G_1)$ and $C_r^*(G_1)$  are AF-embeddable.
	
	From \eqref{AF-Lie-1}  it follows that there is a homeomorphism 
	\begin{equation}\label{AF-Lie-3}
	\Prim(G) \simeq \Prim(S_1) \times \Prim(G_1).
	\end{equation}
	It is straightforward to check that if the product of two topological spaces is $T_1$, then 
	each of them is $T_1$. 
	Hence, by \eqref{AF-Lie-3}, $\Prim(G_1)$ is $T_1$.
	
	Since $G_1$ has no semisimple factor, we then have by \cite[Prop.~3, p. 47]{Pu78} that $G_1=K \ltimes G_2$, where 
	$K$ and $G_2$ are simply connected Lie groups, 
	$K$ is compact, while $G_2$ is solvable of type \R.
	It follows that   $G_1$ is amenable (see  \cite[Prop.~11.13]{Pa88}), hence $C^*_r(G_1)= C^*(G_1)$. 
	By Theorem~\ref{solvable-bis},
	we have that either $G_2=\{1\}$ or there are no non-empty quasi-compact open subsets of $\Prim(G_2)$. 
	Using Lemma~\ref{KG2} we get that either $G_1=K$, or $\Prim(G_1)$ has no non-empty quasi-compact and open subsets. 
	Thus by \cite[Cor.~B]{Ga20} $C^*_r(G_1)= C^*(G_1)$   is AF-embeddable.
\end{proof}

\begin{example}[Generalized Mautner groups]
	\normalfont 
	Let $\Vc_1$ and $\Vc_2$ be finite-dimensional real vector spaces, 
	regarded as abelian additive groups $(\Vc_j,+)$ for $j=1,2$. 
Then for any compact abelian connected subgroup 
	 ${\mathbf T}\subseteq\GL(\Vc_2)=\Aut(\Vc_2)$ 
	 and any continuous group morphism $\beta\colon \Vc_1\to {\mathbf T}$, $v\mapsto \beta_v$,  
	 the corresponding semidirect product group $\Vc_2\rtimes_\beta\Vc_1$ 
	 is a connected simply connected solvable Lie group of type~R. 
	 In fact, for every $v_1\in\Vc_1$
the one-parameter group $\{\beta_{tv_1}=\ee^{t\de\beta(v_1)}\mid t\in\RR\}$ is bounded in $\End(\Vc_2)$, and this directly implies that all the eigenvalues of $\de\beta(v_1)$ are contained in $\ie\RR$.
On the other hand, the adjoint representation of the Lie group $\Vc_2\rtimes_\beta\Vc_1$ is given by
$(\Ad(v_2,v_1))(w_2,w_1)=(\de\beta(v_1)w_2,0)$ for all $v_j,w_j\in\Vc_j$ for $j=1,2$, hence the spectrum of $\Ad(v_2,v_1)$ is contained in $\ie\RR$.

	 On the other hand, the commutant ${\mathbf T'}:=\{g\in\GL(\Vc_2)\mid (\forall h\in {\mathbf T})\ gh=hg\}$ is a closed subgroup of $\GL(\Vc_2)$ 
	 with ${\mathbf T}\subseteq {\mathbf T'}$ since ${\mathbf T}$ is abelian. 
	 For any compact connected subgroup $K\subseteq {\mathbf T'}$ 
	  we define 
	 $$\alpha\colon K \to\GL(\Vc_2\times\Vc_1),\quad k\mapsto \alpha_k:=k\times\id_{\Vc_1}.$$
	 A simple computation shows that $\alpha(K)\subseteq\Aut(\Vc_2\rtimes_\beta\Vc_1)$ 
	 hence, we may define the semidirect product group $(\Vc_2\rtimes_\beta\Vc_1)\rtimes_\alpha K$.

	 It is often the case that $K$ can be selected to be simply connected.
	 For instance, when $\Vc_2=\Vc_{2a}\otimes_{\CC}\Vc_{2b}$ is a tensor product of complex vector spaces with $\dim_\CC\Vc_{2b}\ge 2$ and the image of $\beta$ consists of $\CC$-linear maps contained in 
	 $\End_\CC(\Vc_{2a})\otimes\id_{\Vc_{2b}}$. 
	 Then ${\mathbf T'}$ contains $\id_{\Vc_{2a}}\otimes\End_\CC(\Vc_{2b})$ hence, since $\dim_\CC\Vc_{2b}\ge 2$, we may select $K\subseteq {\mathbf T'}$ with $K$ isomorphic to the special unitary group $\SU(n)$ for suitable $n\ge 2$, and then $K$ is simply connected. 
	 In that case the group $(\Vc_2\rtimes_\beta\Vc_1)\rtimes_\alpha K$ satisfies the hypothesis of Theorem~\ref{AF-Lie}. 
\end{example}

 \begin{example}[Automorphisms of Heisenberg groups]
	\normalfont 
For $n\ge 1$ consider the Heisenberg group $H_{2n+1}=(\RR\times\CC^n,\cdot)$. 
We simultaneously regard $\CC^n$ as a complex Hilbert space with its usual scalar product $(\cdot\mid\cdot)$. 
and as a real vector space endowed with the symplectic form $\omega(\cdot,\cdot):=\Im(\cdot\mid\cdot)$  
and with its corresponding symplectic group
 $$\Sp(2n,\RR)=\{g\in \End_\RR(\CC^n)\mid (\forall v,w\in\CC^n)\quad 
 \omega(gv,gw)=\omega(v,w)\}.$$
 There is natural injective morphism of Lie groups $$\alpha\colon\Sp(2n,\RR)\to\Aut(H_{2n+1}), \quad 
 g\mapsto\alpha_g=\id_{\RR}\times g.$$
 The unitary group $\U(n)$ is a maximal compact subgroup of the simple Lie group $\Sp(2n,\RR)$,
 therefore $\SU(n)$ is a simply connected compact subgroup of $\Sp(2n,\RR)$.
 Hence the semidirect product group $H_{2n+1}\rtimes_\alpha\SU(n)$ 
 is a simply connected Lie group that satisfies the hypothesis of Theorem~\ref{AF-Lie}. 
\end{example}

\begin{example}[Free nilpotent Lie algebras]
	\normalfont 
Let $\Vc$ be a finite-dimensional real vector space. 
For any integer $r\ge 1$ let $ \ng_r(\Vc)$ be the 
 corresponding free $r$-step nilpotent Lie algebra generated by $\Vc$, and let
$N_r(\Vc)$  be the connected simply connected nilpotent Lie group whose Lie algebra is $\ng_r(\Vc)$. 
The correspondence $\Vc\mapsto\ng_r(\Vc)$ gives a functor from the category of finite-dimensional real vector spaces to the category of $r$-step nilpotent real Lie algebras. 
The action of that functor on morphisms gives a natural injective morphism of Lie groups 
$\alpha\colon \GL(\Vc)\to\Aut(N_r(\Vc))$. 
Then for any simply connected compact group $K\subseteq\GL(\Vc)$ 
one obtains the semidirect product group $N_r(\Vc)\rtimes_\alpha K$, 
which is a simply connected Lie group that satisfies the hypothesis of Theorem~\ref{AF-Lie}. 
\end{example}


\section{On primitively AF-embeddable $C^*$-algebras}\label{section7}

\begin{definition}\label{prim1}
	\normalfont
	A $C^*$-algebra $\Ac$ is called \emph{primitively AF-embeddable} if its primitive quotients $\Ac/\Pc$ for arbitrary $\Pc\in\Prim(\Ac)$ are AF-embeddable $C^*$-algebras.  
\end{definition}

\begin{remark}\label{prim2}
	\normalfont
	We recall that for a type~I, separable $C^*$-algebra $\Ac$, if $\Ac$ is primitively AF-embeddable, then it is AF-embeddable.  
	Indeed, since $\Ac$ is primitively AF-embeddable, it follows that for every irreducible $*$-representation $\pi\colon\Ac\to\Bc(\Hc)$ 
	the $C^*$-algebra $\pi(\Ac)\simeq\Ac/\Ker\pi$ is AF-embeddable, hence $\pi(\Ac)$ is stably finite by \cite[Lemma 1.3]{Sp88}. 
	Therefore $\Ac$ is residually finite by \cite[Prop. 3.2]{Sp88}, and  \cite[Th. 3.6]{Sp88}
	implies that $\Ac$ is AF-embeddable. 
\end{remark}

It is not clear to what extent the above Remark~\ref{prim2} carries over beyond the type~I $C^*$-algebras, that is, if every primitively AF-embeddable $C^*$-algebra is AF-embeddable.  
(See also \cite{DdDe11}.))
We prove however that this is the case for all $C^*$-algebras of connected, simply connected solvable Lie groups, irrespectively of whether they are type~I or not.
More specifically, we prove that if $\Ac$ is the $C^*$-algebra of a connected, simply connected solvable Lie group, then the following implications hold true: 
$$\begin{aligned}
&	\Ac\text{ is primitively AF-embeddable} \stackrel{(1)}{\iff} \Ac \text{ is strongly quasi-diagonal}\\
&\stackrel{(2)}{\iff} \Prim(\Ac)\text{ is }T_1  
\stackrel{(3)}{\implies} \Ac\text{ is AF-embeddable}
\end{aligned}
$$
Specifically, we prove the equivalence (1) in Corollary~\ref{thm-classR}, (2) is a consequence of 
the same corollary and of \cite{MoRo76}, while the implication (3) follows from Theorem~\ref{AF-Lie}.

The following fact is related to \cite[Th. 1.1]{BB18} and is applicable to Lie groups that need not be simply connected or solvable.

\begin{lemma}\label{prim4}
	Let $G$ be a connected Lie group. If $\Prim(G)$ is $T_1$, then the following assertions hold: 
	\begin{enumerate}[{\rm(i)}]
		\item\label{prim4_item1}
		The $C^*$-algebra $C^*(G)$ is primitively AF-embeddable. 
		\item\label{prim4_item2} 
		The Lie group $G$ is type I if and only if it is liminary. 
	\end{enumerate}
\end{lemma}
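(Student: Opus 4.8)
The plan is to prove both assertions of Lemma~\ref{prim4} by reducing to known structural results about connected Lie groups together with the $T_1$ hypothesis. For \eqref{prim4_item1}, I would fix a primitive ideal $\Pc\in\Prim(G)$ and analyze the quotient $C^*(G)/\Pc$. Since $\Prim(G)$ is $T_1$, the singleton $\{\Pc\}$ is closed, which means $\Pc$ is a maximal primitive ideal, so $C^*(G)/\Pc$ is a primitive $C^*$-algebra with no nontrivial closed two-sided ideals coming from the topology of $\Prim(G)$ being trivial at that point; more usefully, $\Pc$ being maximal means the quotient is a primitive $C^*$-algebra whose primitive ideal space is a single point. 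The key is then to invoke the characterization that for a connected Lie group, $\Prim(G)$ being $T_1$ forces each primitive quotient to be, up to the relevant structure, of a tractable type. Concretely, I would write $G = S_1 \times G_1$ with $S_1$ simply connected semisimple (on the simply connected cover) — or more carefully, pass to the simply connected cover and use that $\Prim$ of a quotient by a discrete central subgroup embeds in $\Prim$ of the cover — and then apply Theorem~\ref{AF-Lie} together with the fact that AF-embeddability passes to quotients.

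Actually the cleaner route for \eqref{prim4_item1}: since $\Prim(G)$ is $T_1$, every primitive quotient $C^*(G)/\Pc$ is a primitive $C^*$-algebra which, by maximality of $\Pc$, has spectrum equal to a point, hence is a primitive antiliminary-or-type-I piece; but the crucial input is that $C^*(G)/\Pc$ is the image of $C^*(G)$ under an irreducible representation of $G$. The real content is to show $\pi(C^*(G))$ is AF-embeddable for every irreducible $\pi$. Here I would use that $\Ker\pi = \Pc$ is maximal, so $\pi$ is a factor representation generating a primitive quotient, and invoke a result (as in the spirit of \cite{Ga20} and \cite{BB18}) that such a quotient, arising from a connected Lie group with $T_1$ primitive ideal space, is stably finite — one checks the quotient has no nonzero projections or uses that it cannot contain a unital purely infinite corner — and then that a primitive quasidiagonal (or stably finite nuclear) $C^*$-algebra with these properties is AF-embeddable. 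The simplest honest argument, though, is: pass to the universal cover $\widetilde{G}$, note $C^*(G)/\Pc$ is a quotient of $C^*(\widetilde G)/\widetilde\Pc$ for a suitable $\widetilde\Pc$, $\Prim(\widetilde G)$ is $T_1$ as well (singletons pull back to closed sets), so by Theorem~\ref{AF-Lie} $C^*(\widetilde G)$ is AF-embeddable, and quotients of AF-embeddable $C^*$-algebras that are... — no, quotients need not be AF-embeddable. So instead I would directly show $C^*(\widetilde G)/\widetilde\Pc$ is AF-embeddable using that $\Prim$ being $T_1$ gives, via the earlier sections (Corollary~\ref{solvable} and Lemma~\ref{KG2}), absence of nonempty quasi-compact open subsets in each primitive quotient's spectrum, then apply \cite[Cor.~B]{Ga20} to the quotient.

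For \eqref{prim4_item2}, one direction is trivial since liminary implies type~I. For the converse, assume $G$ is type~I. Then $\Prim(G)$ is a $T_0$ space in which, by hypothesis, points are closed, so $\Prim(G)$ is $T_1$; for a type~I $C^*$-algebra, $\Prim(\Ac)$ is homeomorphic to $\widehat{\Ac}$, and $\widehat{\Ac}$ being $T_1$ means every irreducible representation generates a maximal ideal. A type~I $C^*$-algebra whose every irreducible representation has image containing the compacts (CCR on each primitive quotient) and whose primitive ideals are all maximal is liminary: indeed for a separable type~I $C^*$-algebra, $\Ac/\Pc$ is a primitive type~I $C^*$-algebra, and a primitive type~I $C^*$-algebra with $\Prim = \{\mathrm{pt}\}$ and that point closed is elementary, i.e. isomorphic to $\Kc(\Hc)$; hence every primitive quotient is $\Kc(\Hc)$, which is exactly the definition of liminary/CCR. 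So the argument is: $T_1 \Rightarrow$ all primitive ideals maximal $\Rightarrow$ (given type~I) each $\pi(C^*(G)) = \Kc(\Hc_\pi) \Rightarrow$ liminary.

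The main obstacle I expect is part \eqref{prim4_item1}: the subtlety is that while Theorem~\ref{AF-Lie} handles $C^*(G)$ itself when $G$ is simply connected, here $G$ is only connected, and I must either lift to the simply connected cover (controlling how $\Prim$ behaves under central quotients) and then descend, or argue directly at the level of primitive quotients. The cleanest fix is to observe that for each $\Pc\in\Prim(G)$ with $\{\Pc\}$ closed, the quotient $C^*(G)/\Pc$ has a $T_1$ — indeed one-point — primitive ideal space, is nuclear (amenability is not needed, only type~I-ness is not available, but connected Lie groups have nuclear $C^*$-algebras), and, crucially, has no nonempty quasi-compact open subset in its primitive ideal space unless it is elementary: this is where I would reuse the chain Theorem~\ref{solvable-bis} $\to$ Lemma~\ref{KG2} applied to the structure $G_1 = K\ltimes G_2$ of the relevant factor, concluding via \cite[Cor.~B]{Ga20} that $C^*(G)/\Pc$ is AF-embeddable. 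Pinning down that no quasi-compact open subset argument survives passage to a primitive quotient is the delicate point, and I would spend most of the writeup making that reduction precise.
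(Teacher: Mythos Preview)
Your argument for \eqref{prim4_item2} is essentially correct and independent of the paper's route: once all primitive ideals are maximal, each primitive quotient is simple, and a simple type~I $C^*$-algebra is elementary, hence $C^*(G)$ is liminary. The paper obtains \eqref{prim4_item2} instead as a by-product of its proof of \eqref{prim4_item1}, but your direct argument is fine.

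For \eqref{prim4_item1}, however, there is a genuine gap. Both of your candidate strategies fail. Lifting to the simply connected cover and invoking Theorem~\ref{AF-Lie} does not help, as you yourself note, because AF-embeddability does not pass to quotients. More seriously, your fallback plan of applying \cite[Cor.~B]{Ga20} to the primitive quotient cannot work: $C^*(G)/\Pc$ is simple (since $\Pc$ is maximal), so its primitive ideal space is a single point, which \emph{is} a nonempty quasi-compact open subset. Gabe's criterion is therefore never satisfied by a nonzero simple $C^*$-algebra, and the ``no quasi-compact open'' machinery of Sections~\ref{proofs}--\ref{section5} simply does not apply at the level of primitive quotients.

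The missing idea is a structural input specific to connected Lie groups: Poguntke's theorem \cite[Thm.~2]{Po83}, which says that every simple quotient of $C^*(G)$ is isomorphic to $\Ac_\Pc\otimes\Kc(\Hc_\Pc)$ with $\Ac_\Pc$ either $\CC$ or a simple noncommutative torus. One then shows directly that simple noncommutative tori are AF-embeddable (e.g.\ via \cite{Ph06} and \cite{Ro04}, or via the faithful-trace argument and \cite{Sch18}). This is how the paper proceeds, and without something of this strength your proof of \eqref{prim4_item1} cannot be completed.
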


\begin{proof}
	\eqref{prim4_item1}
	The hypothesis that $\Prim(G)$ is $T_1$ is equivalent to the fact that every primitive ideal of $C^*(G)$ is maximal, which is further equivalent to the fact that every primitive quotient of $C^*(G)$ is a simple $C^*$-algebra. 
	Then, by \cite[Thm.~2, p.~161]{Po83}, for every $\Pc\in\Prim(G)$ we have that $C^*(G)/\Pc\simeq\Ac_\Pc\otimes\Kc(\Hc_\Pc)$, where $\Ac_\Pc$ is a simple $C^*$-algebra which is either 1-dimensional or a noncommutative torus, 
	and $\Hc_\Pc$ is a suitable Hilbert space. 
	To conclude the proof, we must show that if a noncommutative torus is simple, then it is AF-embeddable. 
	One way to obtain that conclusion is to combine the fact that if a noncommutative torus is a simple $C^*$-algebra, then it is an approximately homogeneous $C^*$-algebra by  \cite[Th. 3.8]{Ph06}, 
	and on the other hand every approximately sub-homogeneous $C^*$-algebra is AF-embeddable by \cite[Prop. 4.1]{Ro04}. 
	Alternatively, one can reason as follows in order to obtain the even stronger property that $\Ac_\Pc$ embeds into a unital simple AF-algebra: 
	We recall that if $\Ac_\Pc$ is a noncommutative torus and if $\Ac_\Pc$ is moreover a simple $C^*$-algebra, then $\Ac_\Pc$ has a tracial state $\tau$. (See for instance \cite[Th. 1.9]{Ph06}.) Since $\tau$ is a tracial state, the set $N_\tau:=\{a\in\Ac_\Pc\mid \tau(a^*a)=0\}$ is a closed 2-sided ideal of $\Ac_\Pc$, which is a simple $C^*$-algebra, hence $N_\tau=\{0\}$, that is, the tracial state $\tau$ is faithful. 
	Moreover, $\Ac_\Pc$ is separable, nuclear, and satisfies the Universal Coefficient Theorem of \cite{RS87}. 
	(See for instance the proof of \cite[Th. 3.8]{Ph06}.) On the other hand, every trace on a nuclear $C^*$-algebra is amenable by \cite[Prop. 6.3.4]{BO08}, hence $\tau$ is a faithful amenable trace on $\Ac_\Pc$. Since $\Ac_\Pc$ is nuclear, hence exact, it then follows by \cite[Th. A]{Sch18} that $\Ac_\Pc$ embeds into a simple AF-algebra. 
	We thus see   that $C^*(G)/\Pc$ is AF-embeddable for arbitrary $\Pc\in\Prim(G)$. 
	
	\eqref{prim4_item2} 
	The group $G$ is type~I if and only if every primitive quotient is type~I, 
	and then the assertion follows as a by-product of the above reasoning, since no simple noncommutative torus is type~I,
	as seen  for instance by using \cite[Lemma 4.2(ii)]{BB18} for the aforementioned tracial state~$\tau$.
\end{proof}

For the next result (Theorem~\ref{prim6}), the connected locally compact group $G$ does not have to be a Lie group.
This is not entirely surprising, due to the Lie theoretic characterization \cite[Th. 1]{MoRo76} of connected locally compact groups whose primitive ideal space is~$T_1$.

We start by proving the next lemma, which is essentially contained in \cite[proof of Th. 4 and Cor. 3, page 212]{MoRo76} at least in the special case of connected groups. 
 However we give here a more direct proof that does not use the above mentioned \cite[Th. 1]{MoRo76}.
  We recall that a topological group $G$ is called \emph{almost connected} if the quotient group $G/G_0$ is compact, where $G_0\subseteq G$ is the connected component that contains the unit element $\1\in G$. 

\begin{lemma}\label{prim5}
	Let $G$ be an almost connected, locally compact group, and denote by $\Lc(G)$ the set of all compact normal subgroups $H\subseteq G$ for which $G/H$ is a Lie group. 
	Then $\Prim(G)$ is~$T_1$ if and only if $\Prim(G/H)$ is $T_1$ for every $H\in\Lc(G)$. 
\end{lemma}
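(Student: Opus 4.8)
The statement asserts that for an almost connected locally compact group $G$, the space $\Prim(G)$ is $T_1$ if and only if $\Prim(G/H)$ is $T_1$ for every $H\in\Lc(G)$. One direction should be essentially immediate: if $\Prim(G)$ is $T_1$, then for each $H\in\Lc(G)$ the quotient map $G\to G/H$ induces an embedding $C^*(G/H)\hookrightarrow C^*(G)$ as a quotient (more precisely $C^*(G/H)$ is a quotient of $C^*(G)$ by the ideal generated by $H$), and a quotient of a $C^*$-algebra with $T_1$ primitive ideal space again has $T_1$ primitive ideal space, since $\Prim(C^*(G)/J)$ is homeomorphic to the closed subset $\{\Pc\in\Prim(C^*(G))\mid \Pc\supseteq J\}$ of $\Prim(G)$, and a subspace of a $T_1$ space is $T_1$. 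So the plan is to dispose of ``$\Rightarrow$'' with this functoriality remark.

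For the nontrivial direction ``$\Leftarrow$'', the key structural fact I would invoke is that an almost connected locally compact group $G$ is a projective limit of Lie groups: by the Gleason--Yamabe theorem the family $\Lc(G)$ of compact normal subgroups with $G/H$ a Lie group is a directed (filtering downward) family whose intersection is trivial, and $G=\varprojlim_{H\in\Lc(G)} G/H$. Correspondingly, $C^*(G)$ is the inductive limit $\varinjlim_{H} C^*(G/H)$ of the directed system of quotient $C^*$-algebras (with connecting maps the further quotient maps $C^*(G/H)\to C^*(G/H')$ for $H\subseteq H'$); equivalently, $\bigcup_H C^*(G/H)$ is dense in $C^*(G)$, identifying $C^*(G/H)$ with the quotient of $C^*(G)$ corresponding to averaging over $H$ against normalized Haar measure. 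The task is then to show that if each $\Prim(C^*(G/H))$ is $T_1$, i.e. every primitive ideal of every $C^*(G/H)$ is maximal, then every primitive ideal of $\varinjlim_H C^*(G/H)$ is maximal.

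The main step, and the one I expect to be the principal obstacle, is the following: show that a primitive ideal $\Pc$ of $\Ac:=\varinjlim_H \Ac_H$ (with $\Ac_H:=C^*(G/H)$) is maximal provided every primitive ideal of each $\Ac_H$ is maximal. Here one should use that each $\Ac_H$ sits inside $\Ac$ not merely as an inductive-limit summand but as the image of a conditional expectation $E_H\colon\Ac\to\Ac_H$ (integration over $H$), with $E_H\to\id$ pointwise in norm as $H$ shrinks, and that $\Pc\cap\Ac_H = \Ker(\pi\vert_{\Ac_H})$ for an irreducible representation $\pi$ with kernel $\Pc$ is an intersection of primitive ideals of $\Ac_H$, hence, since $\Prim(\Ac_H)$ is $T_1$, its primitive quotient $\Ac_H/(\Pc\cap\Ac_H)$ is a simple $C^*$-algebra (being a primitive antiliminal-or-finite quotient that is in fact simple because primitive $=$ maximal there). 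The crux is then to upgrade simplicity of the quotients at each finite stage to simplicity of $\Ac/\Pc$: one argues that $\Ac/\Pc$ is the inductive limit (possibly after passing to closures) of the simple $C^*$-algebras $\Ac_H/(\Pc\cap\Ac_H)$ under injective connecting maps, and an inductive limit of simple $C^*$-algebras along injective $*$-homomorphisms is simple; therefore $\Pc$ is maximal. The delicate points to be handled carefully are (a) that the connecting maps $\Ac_H/(\Pc\cap\Ac_H)\to\Ac_{H'}/(\Pc\cap\Ac_{H'})$ for $H'\subseteq H$ are well defined and injective, which uses that $\Pc\cap\Ac_H = \Pc\cap\Ac_{H'}\cap\Ac_H$, i.e. compatibility of the restricted ideals under the directed system, and (b) that $\bigcup_H \Ac_H/(\Pc\cap\Ac_H)$ is dense in $\Ac/\Pc$, which follows from density of $\bigcup_H\Ac_H$ in $\Ac$ together with surjectivity of the quotient map $\Ac\to\Ac/\Pc$. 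Once these are in place, the conclusion is that every primitive quotient of $C^*(G)$ is simple, which is exactly the assertion that $\Prim(G)$ is $T_1$.
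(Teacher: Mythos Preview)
Your ``$\Rightarrow$'' direction is fine and matches the paper's use of the fact that $C^*(G/H)$ is a quotient of $C^*(G)$.

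The ``$\Leftarrow$'' direction has a genuine gap. You assert that $\Ac_H/(\Pc\cap\Ac_H)$ is simple because $\Pc\cap\Ac_H=\Ker(\pi\vert_{\Ac_H})$ is ``an intersection of primitive ideals of $\Ac_H$'' and $\Prim(\Ac_H)$ is $T_1$. But an intersection of maximal ideals need not be maximal: already in $\Ac_H=C([0,1])$, the intersection of the maximal ideals at $0$ and $1$ has quotient $\CC^2$, which is not simple. So the phrase ``its primitive quotient'' is unjustified---you have not shown $\Pc\cap\Ac_H$ is primitive. Consequently the inductive-limit-of-simples argument does not get off the ground.

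What is actually true here, and what rescues the argument, is a dichotomy: for an irreducible unitary representation $\pi$ of $G$ and $H\in\Lc(G)$, either $\pi$ is trivial on $H$ (in which case $\pi\vert_{\Ac_H}$ is the irreducible representation of $C^*(G/H)$ through which $\pi$ factors, so $\Pc\cap\Ac_H$ is a single primitive, hence maximal, ideal) or $\pi\vert_{\Ac_H}=0$. This dichotomy follows from the fact that $P:=\int_H\pi(h)\,dh$ commutes with $\pi(G)$ by normality of $H$, hence equals $0$ or $1$ by irreducibility. Equivalently---and this is the form the paper uses---every $[\pi]\in\widehat{G}$ lies in the image of $\widehat{j_H}\colon\widehat{G/H}\to\widehat{G}$ for some $H\in\Lc(G)$, which is Lipsman's theorem \cite[Th.~5.4]{Li72}. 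Once you know this, your inductive-limit machinery is unnecessary: if $\pi$ factors through $G/H$, then $C^*(G)/\Pc\simeq C^*(G/H)/(\text{a primitive ideal})$ is simple directly. This is exactly the paper's route: it shows, via Lipsman, that $\Prim(G)$ is covered by open--closed subsets each homeomorphic to some $\Prim(G/H)$, from which both implications follow at once.
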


\begin{proof}
	For every $H\in\Lc(G)$ we denote by $j_H\colon G\to G/H$ its corresponding quotient map. 
	We define 
	$$\widehat{j_H}\colon \widehat{G/H}\to\widehat{G},\quad  [\sigma]\mapsto[\sigma\circ j_H]$$
	and 
	$$(j_H)^*\colon\Prim(G/H)\to\Prim(G), \quad \Ker\sigma\mapsto\Ker(\sigma\circ j_H).$$ 
	By \cite[Prop. 8.C.8]{BkHa19}, there exists a surjective $*$-morphism 
	$$(j_H)_*\colon C^*(G)\to C^*(G/H)$$
	satisfying 
	$\sigma\circ(j_H)_*=\sigma\circ j_H$ for every $[\sigma]\in\widehat{G/H}\simeq\widehat{C^*(G/H)}$, 
	where $[\sigma\circ j_H]\in\widehat{G}\simeq\widehat{C^*(G)}$. 
	Therefore 
	\begin{equation}
	\label{prim5_proof_eq1}
	(j_H)^*(\Pc)=((j_H)_*)^{-1}(\Pc)\text{ for every }\Pc\in\Prim(C^*(G/H)).
	\end{equation}
	The mapping $\widehat{j_H}$ is a homeomorphism of $\widehat{G/H}$ onto an open-closed subset of $\widehat{G}$ by \cite[Th. 5.4]{Li72} or \cite[Th. 5.2(i)]{Li72}.
	Similarly, 
	the mapping $(j_H)^*$ is a homeomorphism of $\Prim(G/H)$ onto a  closed subset of $\Prim(G)$ by
	\eqref{prim5_proof_eq1} and \cite[Prop. 3.2.1]{Dix77}. 
	
	Since the maps $\kappa_G\colon\widehat{G}\to\Prim(G)$, $[\pi]\mapsto\Ker\pi$ and $\kappa_{G/H}\colon\widehat{G/H}\to\Prim(G/H)$, $[\sigma]\mapsto\Ker\sigma$ are open, continuous, and surjective,  
	while the diagram 
	$$\xymatrix{\widehat{G/H}\ar[d]_{\kappa_{G/H}}  \ar[r]^{\widehat{j_H}} &   \widehat{G}  \ar[d]^{\kappa_{G}}\\
		\Prim(G/H  )\ar[r]^{(j_H)^*}  & \Prim(G) 
	}$$
	is commutative, we obtain that the image of the mapping $(j_H)^*$ is also open, hence is an open-closed subset of $\Prim(G)$. 
	
	Finally, one has 
	$$\widehat{G}=\bigcup\limits_{H\in\Lc(G)}\widehat{j_H}(\widehat{G/H})$$
	by \cite[Th. 5.4]{Li72}, since $G$ is almost connected, hence also 
	$$\Prim (G)=\bigcup\limits_{H\in\Lc(G)}(j_H)^*(\Prim(G/H))$$
	and then the assertion follows at once since we have already seen that every set 
	$(j_H)^*(\Prim(G/H))$ is an open-closed subset of $\Prim(G)$ for  $H\in\Lc(G)$. 
\end{proof}

\begin{theorem}\label{prim6}
	Let $G$ be 
	a connected, locally compact group. 
	If $\Prim(G)$ is~$T_1$, then $C^*(G)$ is primitively AF-embeddable. 
\end{theorem}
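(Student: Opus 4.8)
The plan is to reduce the statement to the Lie group case treated in Lemma~\ref{prim4}, by exploiting the projective limit structure of $G$ exactly as in the proof of Lemma~\ref{prim5}. Since $G$ is connected, it is in particular almost connected, so Lemma~\ref{prim5} applies and the hypothesis that $\Prim(G)$ is $T_1$ is equivalent to the requirement that $\Prim(G/H)$ be $T_1$ for every $H\in\Lc(G)$, where $\Lc(G)$ denotes the set of compact normal subgroups $H\subseteq G$ for which $G/H$ is a Lie group. For each such $H$ the quotient $G/H$ is a connected Lie group whose primitive ideal space is $T_1$, hence Lemma~\ref{prim4}\eqref{prim4_item1} shows that $C^*(G/H)$ is primitively AF-embeddable.

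Next I would check that every primitive quotient of $C^*(G)$ is $*$-isomorphic to a primitive quotient of $C^*(G/H)$ for a suitable $H\in\Lc(G)$. Fix $\Pc\in\Prim(C^*(G))$. Since $\kappa_G\colon\widehat{G}\to\Prim(G)$ is surjective, there is $[\pi]\in\widehat{G}$ with $\Ker\pi=\Pc$. As $G$ is almost connected, \cite[Th. 5.4]{Li72} provides $H\in\Lc(G)$ and $[\sigma]\in\widehat{G/H}$ with $\pi$ unitarily equivalent to $\sigma\circ j_H$, where $j_H\colon G\to G/H$ is the quotient map. Recalling from the proof of Lemma~\ref{prim5} the surjective $*$-morphism $(j_H)_*\colon C^*(G)\to C^*(G/H)$ of \cite[Prop. 8.C.8]{BkHa19}, which satisfies $\sigma\circ(j_H)_*=\sigma\circ j_H$, one obtains
$$\Pc=\Ker(\sigma\circ j_H)=((j_H)_*)^{-1}(\Ker\sigma).$$
Setting $\Jc:=\Ker\sigma\in\Prim(C^*(G/H))$ and using that $(j_H)_*$ is surjective, the correspondence and isomorphism theorems for $C^*$-algebras yield a $*$-isomorphism
$$C^*(G)/\Pc=C^*(G)/((j_H)_*)^{-1}(\Jc)\simeq C^*(G/H)/\Jc.$$

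Finally, $C^*(G/H)$ is primitively AF-embeddable by the first step, so $C^*(G/H)/\Jc$ is AF-embeddable, and therefore $C^*(G)/\Pc$ is AF-embeddable as well. Since $\Pc$ was an arbitrary element of $\Prim(C^*(G))$, this proves that $C^*(G)$ is primitively AF-embeddable. The only point that requires care is the structural input in the second step, namely that every irreducible representation of $G$ factors through some Lie quotient $G/H$ with $H\in\Lc(G)$; but this is precisely \cite[Th. 5.4]{Li72} for almost connected groups, already used in the proof of Lemma~\ref{prim5}, so no genuinely new obstacle arises and the remaining verifications are routine manipulations with quotient $C^*$-algebras.
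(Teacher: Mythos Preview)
Your proof is correct and follows essentially the same route as the paper: reduce to a Lie quotient $G/H$ via the factorization of an irreducible representation through some $H\in\Lc(G)$, invoke Lemma~\ref{prim5} to get $\Prim(G/H)$ is $T_1$, apply Lemma~\ref{prim4}\eqref{prim4_item1}, and identify the primitive quotient of $C^*(G)$ with one of $C^*(G/H)$. The only cosmetic differences are that the paper cites \cite[Th.~3.1]{Li72} rather than \cite[Th.~5.4]{Li72} for the factorization, and phrases the final identification as $\pi(C^*(G))=\pi_0(C^*(G/H))$ instead of $C^*(G)/\Pc\simeq C^*(G/H)/\Jc$; these are equivalent formulations of the same step.
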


\begin{proof}
	We use the notation of Lemma~\ref{prim5} and its proof. 
	Let $\pi\colon G\to\Bc(\Hc)$ be an arbitrary unitary irreducible representation. 
	It follows by \cite[Th. 3.1]{Li72} that there exist $H\in\Lc(G)$ and an unitary irreducible representation $\pi_0\colon G/H\to\Bc(\Hc)$ with $\pi=\pi_0\circ j_H$. 
	Hence, when we extend $\pi$ and $\pi_0$ to irreducible $*$-representations of $C^*(G)$ and $C^*(G/H)$, respectively, 
	we have  
	$$\pi=\pi_0\circ (j_H)_*, $$  
	where  $(j_H)_*\colon C^*(G)\to C^*(G/H)$ is a surjective $*$-morphism.
	(See \cite[Prop. 8.C.8]{BkHa19}.)
	
	Since $\Prim(G)$ is~$T_1$, it follows by Lemma~\ref{prim5} that $\Prim(G/H)$ is~$T_1$. 
	On the other hand, $G$ is connected, hence $G/H$ is a connected Lie group, 
	hence $C^*(G/H)$ is primitively AF-embeddable by Lemma~\ref{prim4}\eqref{prim4_item1}. 
	In particular $\pi_0(C^*(G/H))$ is an AF-embeddable $C^*$-algebra. 
	Since $\pi(C^*(G))=\pi_0((j_H)_*(C^*(G)))=\pi_0(C^*(G/H))$, 
	it the follows that $\pi(C^*(G))$ is AF-embeddable.
	This completes the proof. 
\end{proof}

\begin{corollary}\label{prim7}
	Let $G$ be 
	a connected, locally compact group. 
	If $\Prim(G)$ is~$T_1$, then $C^*(G)$ is strongly quasi-diagonal. 
\end{corollary}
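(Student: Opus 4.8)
The plan is to derive Corollary~\ref{prim7} directly from Theorem~\ref{prim6} by showing that primitive AF-embeddability of $C^*(G)$, together with the $T_1$ condition on $\Prim(G)$, forces strong quasi-diagonality. Recall that a $C^*$-algebra is strongly quasi-diagonal if every one of its representations is quasi-diagonal; by a standard reduction it suffices to check this for representations that generate primitive quotients, or more precisely to show that every primitive quotient $C^*(G)/\Pc$ is quasi-diagonal (since a separable $C^*$-algebra is strongly quasi-diagonal iff all its primitive quotients are quasi-diagonal — this uses that quasi-diagonality passes appropriately through the representation theory). So the first step is: invoke Theorem~\ref{prim6} to get that $C^*(G)$ is primitively AF-embeddable, i.e. every $C^*(G)/\Pc$ is AF-embeddable.

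The second step is to observe that under the $T_1$ hypothesis every $\Pc\in\Prim(G)$ is a \emph{maximal} ideal, so $C^*(G)/\Pc$ is a \emph{simple} $C^*$-algebra. A simple $C^*$-algebra that is AF-embeddable is in particular quasi-diagonal: AF-embeddability implies stable finiteness and, via the Voiculescu-type characterizations, AF-embeddable separable $C^*$-algebras are quasi-diagonal (every AF algebra is quasi-diagonal and quasi-diagonality passes to subalgebras). Hence each primitive quotient $C^*(G)/\Pc$ is quasi-diagonal.

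The third step is to upgrade "all primitive quotients are quasi-diagonal" to "strongly quasi-diagonal". Here I would cite the known fact (due essentially to Hadwin, and used in this context elsewhere in the literature on group $C^*$-algebras) that a separable $C^*$-algebra $\Ac$ is strongly quasi-diagonal if and only if $\Ac/\Pc$ is quasi-diagonal for every $\Pc\in\Prim(\Ac)$; more simply, one can argue that an arbitrary representation $\pi$ of $C^*(G)$ decomposes (via its central decomposition / as a direct integral) into factor representations, and for a $C^*$-algebra all of whose primitive quotients are simple and quasi-diagonal one checks quasi-diagonality of $\pi$ by reducing to the quotient $C^*(G)/\ker\pi$, which is a (possibly infinite) product situation controlled by the simple quasi-diagonal pieces. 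Since $C^*(G)$ is separable (second countable group), the decomposition is manageable.

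The main obstacle I expect is precisely this last reduction: passing from quasi-diagonality of each simple primitive quotient to strong quasi-diagonality of the whole algebra. Quasi-diagonality is not automatically preserved under arbitrary extensions, and a general representation $\pi$ of $C^*(G)$ need not factor through a single primitive quotient. The cleanest route is to cite the general principle that a type of "local" quasi-diagonality at the level of irreducible/primitive quotients, combined with the $T_1$ (hence Hausdorff-like separation of points of $\Prim(G)$ by closed sets) structure, yields strong quasi-diagonality — this is where one leans on the relevant theorem from the quasi-diagonality literature rather than reproving it. If such a black-box citation is available, the corollary is then immediate; if not, one would need to carry out the direct-integral argument carefully, using that the $T_1$ condition makes the primitive ideal space well enough behaved to glue the finite-dimensional approximations.
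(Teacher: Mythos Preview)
Your approach is essentially the same as the paper's, which simply says: ``Use Theorem~\ref{prim6} and the fact that every AF-embeddable $C^*$-algebra is quasi-diagonal.'' That is, the paper invokes Theorem~\ref{prim6} to get that each primitive quotient is AF-embeddable, hence quasi-diagonal, and concludes strong quasi-diagonality.

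Two comments on your write-up. First, your second step takes an unnecessary detour through simplicity: the implication ``AF-embeddable $\Rightarrow$ quasi-diagonal'' holds for \emph{any} $C^*$-algebra (AF algebras are quasi-diagonal, and quasi-diagonality passes to subalgebras), so you do not need to re-invoke the $T_1$ hypothesis here. Second, and more importantly, your third step is over-worried. The reduction ``every primitive quotient is quasi-diagonal $\Rightarrow$ strongly quasi-diagonal'' is elementary and does not require direct integrals or further use of $T_1$: any closed two-sided ideal $\Jc$ of $C^*(G)$ satisfies $\Jc=\bigcap\{\Pc\in\Prim(G)\mid \Pc\supseteq\Jc\}$, so $C^*(G)/\Jc$ embeds into $\prod_{\Pc\supseteq\Jc}C^*(G)/\Pc$. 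Since arbitrary products of quasi-diagonal $C^*$-algebras are quasi-diagonal and quasi-diagonality passes to sub-$C^*$-algebras, $C^*(G)/\Jc$ is quasi-diagonal. Thus every representation of $C^*(G)$ has quasi-diagonal image, which is exactly strong quasi-diagonality. The paper sweeps this under the rug, but that is the argument you were looking for.
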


\begin{proof}
	Use Theorem~\ref{prim6} and the fact that every AF-embeddable $C^*$-algebra is quasi-diagonal. 
\end{proof}

In the case of connected and simply connected solvable Lie groups the above results go in the reverse direction, as well; we have the following corollary. 

\begin{corollary}\label{thm-classR}
	Let $G$ be a connected simply connected solvable Lie group. 
	Then the following assertions are equivalent. 
	\begin{enumerate}[\rm (i)]
		\item\label{thm-classR-i}
		$G$ is of type~\R. 
		\item\label{thm-classR-ii}
		$C^*(G)$ is primitively  AF-embeddable.
		\item\label{thm-classR-iii}
		$C^*(G)$ is strongly  quasi-diagonal.
	\end{enumerate}
\end{corollary}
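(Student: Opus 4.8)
The plan is to prove the cycle of implications $\eqref{thm-classR-i}\implies\eqref{thm-classR-ii}\implies\eqref{thm-classR-iii}\implies\eqref{thm-classR-i}$, combining the earlier results of the paper with the classical characterization of type~\R\ solvable Lie groups via the $T_1$ property of $\Prim(G)$. Recall that by \cite[Thm.~2]{Pu73}, for a connected simply connected solvable Lie group $G$ one has: $G$ is of type~\R\ $\iff$ $\Prim(G)$ is $T_1$. This equivalence will be used freely to translate between conditions on the spectrum of the adjoint action and topological conditions on $\Prim(G)$.

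First I would show $\eqref{thm-classR-i}\implies\eqref{thm-classR-ii}$. Assuming $G$ is of type~\R, the group $G$ is in particular a connected Lie group with $\Prim(G)$ being $T_1$ (by \cite[Thm.~2]{Pu73}), so Lemma~\ref{prim4}\eqref{prim4_item1} applies directly and gives that $C^*(G)$ is primitively AF-embeddable. Alternatively one could invoke Theorem~\ref{prim6}, but Lemma~\ref{prim4} is the more economical route here since $G$ is already a Lie group. Next, $\eqref{thm-classR-ii}\implies\eqref{thm-classR-iii}$ is the easiest step: if $C^*(G)$ is primitively AF-embeddable then each primitive quotient $C^*(G)/\Pc$ is AF-embeddable, hence quasi-diagonal (every AF-embeddable $C^*$-algebra is quasi-diagonal); since a $C^*$-algebra all of whose primitive quotients are quasi-diagonal is by definition strongly quasi-diagonal, we conclude $C^*(G)$ is strongly quasi-diagonal.

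For the final implication $\eqref{thm-classR-iii}\implies\eqref{thm-classR-i}$, I would argue by contraposition. Suppose $G$ is \emph{not} of type~\R. Then by \cite[Thm.~2]{Pu73} the space $\Prim(G)$ is not $T_1$, so there is a primitive ideal $\Pc\in\Prim(G)$ that is not maximal; equivalently, the primitive quotient $C^*(G)/\Pc$ is not simple. The key structural input is \cite[Thm.~2, p.~161]{Po83} (already cited in the proof of Lemma~\ref{prim4}): every primitive quotient of $C^*(G)$ has the form $\Ac_\Pc\otimes\Kc(\Hc_\Pc)$ where $\Ac_\Pc$ is a noncommutative torus; when $\Pc$ is not maximal, the corresponding torus $\Ac_\Pc$ is not simple. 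I then need to exhibit that a non-simple noncommutative torus is not quasi-diagonal, which would show $C^*(G)/\Pc$ is not quasi-diagonal and hence $C^*(G)$ is not strongly quasi-diagonal. The point is that a non-simple noncommutative torus contains (or maps onto) a copy of a simple non-type-I irrational rotation algebra tensored with something forcing the presence of a nonzero projection, or more directly: a non-simple noncommutative torus $\Ac_\Pc$ admits a character or a one-dimensional representation together with infinite-dimensional irreducible representations obstructing quasi-diagonality---this is where the non-quasidiagonality result of \cite{BB18} (the analysis underlying the generalized $ax+b$-group example in this very paper) enters.

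The main obstacle I expect is precisely this last point: showing that a \emph{non-simple} noncommutative torus fails to be quasi-diagonal, and that this failure propagates so that $C^*(G)/\Pc$ is not quasi-diagonal. One clean way to handle it is to reduce to the case already settled in \cite[Th.~2.15]{BB18}, namely the generalized $ax+b$-groups $G_D$ with $\mathrm{Re}\,z$ of constant nonzero sign on $\spec(D)$: a solvable Lie group that is not of type~\R\ contains, in a suitable sense, such an $ax+b$-subquotient, whose $C^*$-algebra is not quasi-diagonal and which appears as a quotient of a primitive quotient of $C^*(G)$; since quasi-diagonality passes to quotients, this forces some $C^*(G)/\Pc$ to be non-quasi-diagonal. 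Making the reduction to the $ax+b$-case precise---identifying the offending subquotient from the non-type-\R\ hypothesis and tracking it through the Poguntke structure theorem for primitive quotients---is the technical heart of the argument; once that is in place, all three implications close up and the corollary follows.
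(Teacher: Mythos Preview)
Your arguments for $\eqref{thm-classR-i}\Rightarrow\eqref{thm-classR-ii}$ and $\eqref{thm-classR-ii}\Rightarrow\eqref{thm-classR-iii}$ are correct and coincide with the paper's. (One caveat: strong quasi-diagonality means that \emph{every} quotient, not merely every primitive quotient, is quasi-diagonal; for the implication $\eqref{thm-classR-ii}\Rightarrow\eqref{thm-classR-iii}$ you therefore need the elementary observation that every quotient of $C^*(G)$ embeds in a product of primitive quotients, each of which is quasi-diagonal.)

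The contrapositive argument for $\eqref{thm-classR-iii}\Rightarrow\eqref{thm-classR-i}$, however, has several genuine gaps. First, Poguntke's theorem \cite[Thm.~2]{Po83} classifies the \emph{simple} quotients of $C^*(G)$, so it says nothing about $C^*(G)/\Pc$ when $\Pc$ is a non-maximal primitive ideal; you cannot conclude that such a quotient has the form $\Ac_\Pc\otimes\Kc(\Hc_\Pc)$ with $\Ac_\Pc$ a noncommutative torus. Second, and more fatally, \emph{every} noncommutative torus---simple or not---is quasi-diagonal (indeed AF-embeddable): rational rotation algebras are subhomogeneous, and higher-dimensional tori are nuclear with a faithful trace. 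So even if the Poguntke description were available, no obstruction to quasi-diagonality would arise. Third, your fallback suggestion relies on the assertion that ``quasi-diagonality passes to quotients,'' which is false (the cone over any $C^*$-algebra is quasi-diagonal); you cannot deduce non-quasi-diagonality of $C^*(G)/\Pc$ from that of a further quotient.

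The paper bypasses primitive quotients entirely for this implication. Using the structure theory of Auslander--Moore \cite[Ch.~V, Prop.~2.2]{AuMo66}, if $G$ is not of type~\R\ then there is a connected simply connected closed normal subgroup $H\subseteq G$ with $G/H$ isomorphic to one of three explicit low-dimensional solvable groups ($S_2$, $S_3^\sigma$ with $\sigma\ne 0$, or $S_4$). The short exact sequence $\1\to H\to G\to G/H\to\1$ yields a surjection $C^*(G)\twoheadrightarrow C^*(G/H)$. Since strong quasi-diagonality \emph{does} pass to quotients (every quotient of a quotient is a quotient), it suffices to know that $C^*(S_2)$, $C^*(S_3^\sigma)$, and $C^*(S_4)$ are not strongly quasi-diagonal, which was established in \cite[proof of Thm.~1.1]{BB18}. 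This is the ``$ax+b$-subquotient'' you gestured at, but it lives at the level of \emph{group} quotients and arbitrary $C^*$-quotients, not primitive quotients, and the correct permanence property is that of strong quasi-diagonality rather than quasi-diagonality.
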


\begin{proof}
	The implication \eqref{thm-classR-i} $\Rightarrow$  
	\eqref{thm-classR-ii} follows from Lemma~\ref{prim4} and \cite[Thm. 2, p.~161]{Pu73}.
	On the other hand, \eqref{thm-classR-ii} clearly implies  \eqref{thm-classR-iii}.
	
	We now prove that \eqref{thm-classR-iii} $\Rightarrow$  
	\eqref{thm-classR-i}, by showing that if $G$ is connected simply connected solvable and not of the type \R, then it cannot be strongly quasi-diagonal.
	To this end we show that there exists a closed 2-sided ideal $\Jc\subseteq C^*(G)$ such that the quotient $C^*(G)/\Jc$ is not strongly quasi-diagonal.
	
	It follows by \cite[Prop. 2.2, Ch. V]{AuMo66} 
	that if $G$ is a connected simply connected solvable Lie group and $G$ is not of type \R, then 
	there exists a connected simply connected closed normal subgroup $H\subseteq G$ 
	for which the quotient Lie group $G/H$ is isomorphic to one of the following Lie groups: 
	\begin{enumerate}
		\item $S_2:=\RR\rtimes\RR$, the connected real $ax+b$-group, defined via 
		$$\alpha\colon (\RR,+)\to\Aut(\RR,+), \quad \alpha(t)s=\ee^t s.$$ 
		\item $S_3^\sigma:=\RR^2\rtimes_{\alpha^\sigma}\RR$, defined for $\sigma\in\RR\setminus\{0\}$ via  
		$$\alpha^\sigma\colon (\RR,+)\to\Aut(\RR^2,+), \quad 
		\alpha^\sigma(t)=\ee^{\sigma t}
		\begin{pmatrix}
		\hfill \cos t & \sin t \\
		-\sin t & \cos t
		\end{pmatrix}.$$
		\item $S_4:=\RR^2\rtimes_{\beta}\RR^2$, defined via  
		$$\beta\colon (\RR^2,+)\to\Aut(\RR^2,+), \quad 
		\beta(t,s)=\ee^t
		\begin{pmatrix}
		\hfill \cos s & \sin s \\
		-\sin s & \cos s
		\end{pmatrix}.$$
	\end{enumerate}
	
	We get thus short exact sequence of amenable locally compact groups 
	$$\1\to H\to G\to G/H\to\1$$
	that
	leads to a short exact sequence of $C^*$-algebras 
	$$0\to\Jc\to C^*(G)\to C^*(G/H)\to 0, $$
	for a suitable closed 2-sided ideal $\Jc$ of $C^*(G)$. 
	Therefore, in order to show that $C^*(G)$ is not strongly quasi-diagonal,  
	it suffices to check that the $C^*$-algebra of the above groups 
	$S_2$, $S_3^\sigma$ with $\sigma\in\RR\setminus\{0\}$, and $S_4$ are not strongly quasi-diagonal, and 
	this fact was established in the proof of \cite[Thm.~1.1, (i) $\Rightarrow$ (iii)]{BB18}.  
\end{proof}

\subsection*{Acknowledgements}
We wish to thank the Referee for several useful remarks and suggestions.

\end{document}